\documentclass[12pt, a4paper,leqno]{article}

%Spracheeinstellungen

\usepackage[english]{babel}
\usepackage[utf8]{inputenc}
\usepackage[T1]{fontenc}

%Grafikoptionen

\usepackage{graphicx}

%Tabellen und Nummerierung

\usepackage{paralist}

%Mathepakete

\usepackage{mathtools}
\usepackage{amssymb}
\usepackage{amsthm}
\usepackage{accents}
\usepackage{mathrsfs}
\usepackage{dsfont}
%Eigene Befehle

%Theorme

\newtheorem{thm}{Theorem}[section]

\newtheorem{lem}[thm]{Lemma}
\newtheorem{prop}[thm]{Proposition}

\newtheorem{rem}[thm]{Remark}
\newtheorem{cor}[thm]{Corollary}
\theoremstyle{definition}
\newtheorem{defi}[thm]{Definition}

%Nützliches

\newcommand{\norm}[2]{||#1||_{#2}}
\newcommand{\skp}[2]{\langle #1 \rangle_{#2}}
\newcommand{\Fskp}[2]{\langle\langle #1 \rangle\rangle_{#2}}
\newcommand{\RR}{\mathbb{R}}
\newcommand{\NN}{\mathbb{N}}
\newcommand{\CC}{\mathbb{C}}
\newcommand{\CH}{\mathbb{H}}
\newcommand{\EH}{\mathcal{H}}
\newcommand{\BB}{\mathcal{B}}
\newcommand{\DS}{\mathscr{S}}

\newcommand{\EE}{\mathscr{E}}

\newcommand{\GG}{\mathscr{G}}
\newcommand{\OO}{\mathcal{O}}
\newcommand{\NH}{\mathcal{N}}
\newcommand{\CS}{\mathcal{C}}

\newcommand{\RE}{\textnormal{Re}\,}

\newcommand{\GL}{\textnormal{GL}}

\newcommand{\ID}{\mathds{1}}

\newcommand{\id}{\textnormal{id}}

\newcommand{\SP}{\textnormal{span}\,}
\newcommand{\diag}{\textnormal{diag}}

\newcommand{\tA}{\tilde{A}}

\newcommand{\fG}{\mathfrak{G}}
\newcommand{\fg}{\mathfrak{g}}

\usepackage{verbatim}

\title{Spectral stability of shock profiles for hyperbolically regularized systems of conservation laws}
\author{Johannes Bärlin}

\begin{document}
\maketitle
\begin{abstract}
We report a proof that under natural assumptions shock profiles viewed as heteroclinic travelling wave solutions to a hyperbolically regularized system of conservation laws are spectrally stable if the shock amplitude is sufficiently small. This means that an associated Evans function $\mathcal{E}:\Lambda\rightarrow\CC$ with $\Lambda\subset\CC$ an open superset of the closed right half plane $\CH^+\equiv\{\kappa\in\CC:\RE \kappa \geq 0\}$, has only one zero, namely a simple zero at $0$. The result is analogous to the one obtained in \cite{FS02} and \cite{PZ04} for parabolically regularized systems of conservation laws, and also distinctly extends findings on hyperbolic relaxation systems in \cite{MZ09,PZ04,U09}.
\end{abstract}
This preprint has not undergone peer review (when applicable) or any post-submission
improvements or corrections. The Version of Record of this article is published in \textit{Archive for Rational Mechanics and Analysis} as \cite{B24}, and
is available online at https://doi.org/10.1007/s00205-024-02066-9.
\section{Introduction}
Let $n \in \NN, \mathcal{V} \subset \RR^n$ open, $v^\ast \in \mathcal{V}$, and let $k \in \lbrace 1, \ldots, n \rbrace$. We consider a hyperbolically regularized conservation law of the form
\begin{align}
\label{HsCL}
g(v)_t + f(v)_x = B \square v := B(v_{xx} - v_{tt}) \;\; (v \in \RR^n)
\end{align}
for $(t,x) \in \RR^2$ with  smooth $f,g: \mathcal{V} \rightarrow \RR^n$ and $B \in \RR^{n\times n}$ satisfying
\begin{itemize}
\item[A1] $\forall v \in \mathcal{V}: B$ and the Jacobian matrices $Df(v)$ and $Dg(v)$ of $f$ and $g$ at $v$ are symmetric with  $B$ and $Dg(v^\ast)$ being positive definite;
\item[A2] $Df(v^\ast)$ has real and simple eigenvalues w.r.t.~$Dg(v^\ast)$;
\item[A3] The $k$-th eigenvalue of $Df$ w.r.t.~$Dg$ is genuinely nonlinear at $v^\ast$;
\item[A4] Sub-characteristicity: $Dg(v^\ast) \pm Df(v^\ast)$ is positive definite.
\end{itemize}
We think of system \eqref{HsCL} on the one hand as
\begin{itemize}
\item[(a)] a prototype for hyperbolic artificial viscosity,
\end{itemize}
i.e.~an alternative to the parabolic counterpart (c.f.~\cite{G59}), and on the other hand as
\begin{itemize}
\item[(b)] a simple model for the equations of relativistic fluid dynamics with hyperbolic dissipation as introduced in \cite{FT14}.
\end{itemize}
Let us briefly discuss assumptions A1-A4:

Assumption A1 is classical: Symmetric hyperbolic conservation laws are ubiquitous in mathematical physics \cite{G61,FL71}. In fact, choosing appropriate variables one may often assume that $f$ and $g$ possess potentials \cite{G61,B74,RS81}. Thus we think of $f$ and $g$ as gradient fields which justifies assumption A1.

Assumption A2 means that there are $n$ pairwise distinct real numbers $\mu_i \in \RR$ and $n$ vectors $r_i \in \RR^n \backslash \lbrace 0 \rbrace$ such that
\begin{align*}
Df(v^\ast) r_i = \mu_i Dg(v^\ast) r_i.
\end{align*}
A2 is equivalent to strict hyperbolicity of $Dg(v^\ast)^{-1/2} Df(v^\ast) Dg(v^\ast)^{-1/2}$ which is a common assumption on the flux of hyperbolic conservation laws.

Assumption A3 states that the $k$-th eigenvalue of $Dg^{-1/2} Df Dg^{-1/2}$ is genuinely nonlinear at $v^\ast$. This genuine non-linearity in the sense of Lax \cite{L57} is used, on the one hand, to show the existence of shock profiles \cite{G86,MP85} determining in the spirit of an entropy condition the direction in which the end-states of a shock can be smoothly connected. On the other hand it also enters in various contexts in the proofs on stability of said shock profiles enabling, vaguely speaking, control of contributions that emerge along the $k$-th eigenvector field (see e.g.~\cite{FS02,G86,PZ04,SX93}). Assumption A3 will play a similar role in the following analysis. (For results beyond genuine non-linearity the reader may consult, e.g., \cite{F00,FSW14,JGK93,L03}.)

By assumption A4 we find
\begin{align*}
\sigma (Dg(v^\ast)^{-1/2} Df(v^\ast) Dg(v^\ast)^{-1/2}) \subset (-1,1),
\end{align*}
i.e.~the characteristic speeds of the first-order operator on the left hand side in \eqref{HsCL} lie between those of the second-order operator on the right, the latter being (essentially) the d'Alembertian with speed of light equal to $1$ (see also \cite{W99}). The sub-characteristicity condition A4 (c.f.~\cite{S20}) is inspired by the notion of dissipativity in hyperbolic-parabolic systems introduced by Shizuta and Kawashima in \cite{SK85}, and is thereby closely related to considerations concerning hyperbolic relaxation systems (see e.g.~\cite{KY04,L87,Y04,Z99}). Hence one expects the dispersion curves of shock profiles of system \eqref{HsCL} to look similar to the ones in the case of hyperbolic relaxation systems \cite{MZ02} (see section \ref{Sec2}).

Finally, without loss of generality we may assume
\begin{align*}
f(v^\ast) = g(v^\ast) = 0 \;\textnormal{and} \; v^\ast = 0.
\end{align*}
%\textbf{Notation:} Let us fix some notation. Let $d,m \in \NN$ and $M \in \CC^{d \times m}$. For $j \in \lbrace 1,\ldots,d \rbrace, l \in \lbrace 1,\ldots, m \rbrace$, let $(M)_{jl}$ denote the entry of $M$ in the $j$-th row and $l$-th column. By $M^t$ we denote the transposed of $M$, i.e.~$(M^t)_{jl} = M_{lj}$. The vector $e_j \in \CC^d$ denotes the unit vector along the $j$-th axis with $(e_j)_l = \delta_{jl}$ where $l \in \lbrace 1,\ldots, d \rbrace$ and $\delta_{jl}$ denotes the Kronecker delta. The symbol $\ID_d$ is the unit matrix in $\CC^{d \times d}$, and we set for $j \in \lbrace 1,\ldots,d \rbrace$
%\begin{align*}
%E_{jj} := e_j e_j^t \in \CC^{d \times d}.
%\end{align*}
%The range of an operator $L$ will be denoted by $\mathcal{R}(L)$.
Fix some notation by letting $M^t$ denote the transposed matrix for $M \in \CC^{d \times m}, d,m \in \NN$. The above discussion of assumptions A1-A4 readily leads to:
\begin{lem}
\label{assupmlem1}
If A1-A4 hold then there exist $\delta > 0$ and functions $\mu_i \in C^\infty(B_\delta(0);\RR), r_i \in C^\infty(B_\delta(0);\RR^n\backslash \lbrace 0 \rbrace)$ for $i=1, \ldots, n$ such that for all $v \in B_\delta(0) \subset \RR^n$ the matrices $Dg(v)$ and $Dg(v) \pm Df(v)$ are positive definite, and for all $v,w \in B_\delta(0)$ and $i,j$
\begin{align*}
Df(v) r_i(v) & = \mu_i(v) Dg(v) r_i(v),\\
r_i(v)^t Dg(v) r_j(v) & = \delta_{ij},
\end{align*}
where $\delta_{ij}$ denotes the Kronecker delta, and
\begin{align}
\label{GNL}
\skp{D\mu_k(0),r_k(0)}{} & < 0.
\end{align}
Furthermore, for all $v,w \in B_\delta(0)$ and $i,j$ we have
\begin{align}
\label{subchar}
|\mu_i(v)| < 1
\end{align}
and
\begin{align}
\label{eigorder}
\mu_i(v) < \mu_j(w) \;\; \textnormal{if} \;\; i < j.
\end{align}
\end{lem}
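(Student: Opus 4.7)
The strategy is to reduce the generalized eigenvalue problem $Df(v) r = \mu Dg(v) r$ to a standard symmetric one. By A1, $Dg(0)$ is symmetric positive definite, and by continuity $Dg(v)$ remains symmetric positive definite on a small ball $B_{\delta_0}(0)$; the same continuity argument applied to A4 shows that $Dg(v) \pm Df(v)$ stays positive definite after possibly shrinking $\delta_0$. On this ball the inverse square root $v \mapsto Dg(v)^{-1/2}$ depends smoothly on $v$ (by the functional calculus on the open cone of positive definite symmetric matrices), so the symmetric matrix
\[
A(v) := Dg(v)^{-1/2} Df(v) Dg(v)^{-1/2}
\]
is smooth in $v$. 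A vector $r \neq 0$ solves $Df(v) r = \mu Dg(v) r$ iff $\tilde r := Dg(v)^{1/2} r$ solves $A(v) \tilde r = \mu \tilde r$, and $Dg(v)$-orthonormality of the $r_i(v)$ is precisely usual orthonormality of the $\tilde r_i(v)$.

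The next step is to extract smooth eigenpairs. By A2, $A(0)$ has $n$ simple real eigenvalues $\mu_1(0) < \ldots < \mu_n(0)$; simplicity persists on some smaller ball $B_{\delta_1}(0)$ by continuity, and standard perturbation theory for symmetric matrices with simple spectrum (equivalently, the implicit function theorem applied first to the characteristic polynomial to extract $\mu_i \in C^\infty(B_{\delta_1}(0);\RR)$, and then to the eigenvector equation $(A(v) - \mu_i(v)\id) \tilde r = 0$) yields smooth orthonormal eigenvectors $\tilde r_i \in C^\infty(B_{\delta_1}(0);\RR^n\backslash \lbrace 0 \rbrace)$. Setting $r_i(v) := Dg(v)^{-1/2} \tilde r_i(v)$ then gives the required generalized eigenpairs with the prescribed $Dg(v)$-orthonormalization. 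This smooth eigenvector selection is the most delicate step, but it is routine under the simplicity assumption A2.

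The remaining conclusions follow from continuity and a sign choice. Sub-characteristicity A4 at $v=0$, via simultaneous diagonalization of $Dg(0)$ and $Df(0)$, is equivalent to $|\mu_i(0)| < 1$ for all $i$; continuity of the $\mu_i$ then yields \eqref{subchar} on a sufficiently small ball. The strict ordering $\mu_i(0) < \mu_{i+1}(0)$ likewise extends to the uniform separation \eqref{eigorder} for $v, w$ in a small enough ball. Finally, A3 guarantees $\skp{D\mu_k(0), r_k(0)}{} \neq 0$, its sign depending only on the orientation of $r_k$, so flipping the sign of $\tilde r_k$ if necessary produces \eqref{GNL}. Taking $\delta > 0$ smaller than all radii obtained above completes the argument.
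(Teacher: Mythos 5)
Your proof is correct, and it fills in precisely the standard argument that the paper itself only sketches (the paper states Lemma~1.1 with no explicit proof, merely noting that the preceding discussion of A1--A4 "readily leads to" it): the reduction $A(v)=Dg(v)^{-1/2}Df(v)Dg(v)^{-1/2}$ is exactly the symmetrization the paper alludes to when interpreting A2 and A4, smooth eigenpair extraction under simple spectrum is the routine perturbation step, and the sign normalization of $r_k$ via A3 gives \eqref{GNL}. No gaps; your route matches the paper's intended one.
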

Multiplying \eqref{HsCL} from the left by $B^{-1/2}$ and changing variables via $\tilde{v} = B^{1/2} v$ we may assume with $\ID_n$ being the unit matrix in $\CC^{n}$ that
\begin{align*}
B = \ID_n
\end{align*}
in \eqref{HsCL} while keeping the assumptions A1-A4 (and the conclusions of Lemma \ref{assupmlem1}).
\begin{defi}
A triple $(v^-,v^+,s) \in \mathcal{V} \times \mathcal{V} \times \RR$ is called a shock of the conservation law $g(v)_t + f(v)_x = 0$ if the Rankine-Hugoniot condition
\begin{align*}
f(v^+)-f(v^-) = s(g(v^+)-g(v^-))
\end{align*}
is satisfied. It is called a (Lax-)$p$-shock for $p \in \lbrace 1, \ldots, d\rbrace$ if
\begin{align*}
\mu_p(v^+) < s < \mu_p(v^-) \;\; \textnormal{while} \;\; (\mu_j(v^-)-s)(\mu_j(v^+)-s) > 0 \;\; \textnormal{for} \;\; j \neq p.
\end{align*}
\end{defi}
We will move into the rest frame of a given $p$-shock $(v^-,v^+,s)$ with $s \in (-1,1)$ by the Lorentz-boost
\begin{align*}
\begin{pmatrix} t\\x \end{pmatrix} \mapsto \begin{pmatrix} \gamma(s)(t- s x)\\ \gamma(s)(x-st)\end{pmatrix}
\end{align*}
of speed $s$ where for $s \in (-1,1)$ one has
\begin{align*}
\gamma(s) := (1-s^2)^{-1/2}.
\end{align*}
 Under such a transformation the hyperbolically regularized conservation law \eqref{HsCL} keeps its form.
\begin{lem}
\label{lemLorentz}
Let $s \in (-1,1)$. Then a Lorentz-boost of speed $s$ transforms \eqref{HsCL} into
\begin{align*}
g_s(v)_t + f_s(v)_x = B\square v
\end{align*}
where
\begin{align*}
g_s(v) &:= \gamma(s)(g(v)-sf(v))\\
f_s(v) &:= \gamma(s)(f(v)-sg(v)).
\end{align*}
The assumptions A1-A4 hold with $f$ and $g$ replaced by $f_s$ and $g_s$. For $\delta > 0$ like in Lemma \ref{assupmlem1} the eigenvalues of $Df_s$ w.r.t.~$Dg_s$ on $B_\delta(0)$ are given by
\begin{align}
\label{eigenLorentz}
\mu_{j;s} = \frac{\mu_j-s}{1-\mu_js} \;\; (j \in \lbrace 1, \ldots, d \rbrace)
\end{align}
with eigenvectors $r_j$ and satisfying \eqref{GNL}, \eqref{subchar}, \eqref{eigorder}.
\end{lem}
\begin{proof}
The claimed transformation of equation \eqref{HsCL} with the given $f_s$ and $g_s$ is a direct calculation. Assumption A1 for $f_s$ and $g_s$ follows immediately, too. Using formula \eqref{eigenLorentz} and the fact that suitable eigenvectors are still given by $r_j$, yields A2 and A3 for $f_s$ and $g_s$ as well as \eqref{eigorder} and \eqref{GNL}. A4 for $f_s$ and $g_s$, implying \eqref{subchar}, is a direct calculation.
\end{proof}
Since $|\mu_k(0)| <1$ we can perform a Lorentz-boost of speed $s=\mu_k(0)$, and in view of Lemma \ref{lemLorentz} we assume from now on without loss of generality that
\begin{align*}
\mu_k(0) = 0
\end{align*}
by making the replacements $f \rightarrow f_{\mu_k(0)}$ and $g \rightarrow g_{\mu_k(0)}$. To simplify our set-up further we need a small lemma.
\begin{lem}
\label{splitlem1}
Let $F \in \RR^{n \times n}$ be symmetric. If there are $c>0$ and a splitting 
\begin{align*}
V^- \oplus \ker F \oplus V^+ = \RR^n
\end{align*}
such that for all $\eta^- \in V^-$ and $\eta^+ \in V^+$
\begin{align*}
\skp{\eta^-, F \eta^-}{} & \leq - c |\eta^-|^2,\\
\skp{\eta^+, F \eta^+}{} & \geq c |\eta^+|^2,
\end{align*}
then, counting with multiplicities, $F$ has exactly $\dim V^-$ negative and $\dim V^+$ positive eigenvalues.
\end{lem}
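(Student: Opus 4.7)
The plan is to identify the statement as a concrete manifestation of Sylvester's law of inertia, and prove it directly via the spectral theorem for symmetric matrices. The idea is that the quadratic form $q(\eta) := \langle \eta, F\eta\rangle$ has a signature intrinsically determined by $F$, and the hypothesis exhibits subspaces $V^\pm$ on which $q$ is negatively, respectively positively, definite, together with $\ker F$ on which $q$ vanishes; a dimension count then pins down $\dim V^\pm$ uniquely.

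First, I will invoke the spectral theorem to decompose $\RR^n = E^- \oplus E^0 \oplus E^+$ orthogonally into the spans of eigenvectors of $F$ for its negative, zero, and positive eigenvalues, and set $n_\pm := \dim E^\pm$ and $n_0 := \dim E^0$. Since $E^0 = \ker F$, the hypothesized splitting together with the spectral splitting give the identity $\dim V^- + \dim V^+ = n_- + n_+$, so it suffices to show the two inequalities $\dim V^- \leq n_-$ and $\dim V^+ \leq n_+$.

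The key step is the following: I will show $V^- \cap (E^0 \oplus E^+) = \{0\}$. Given any $\eta = \eta^0 + \eta^+$ in this intersection, the identity $F\eta^0 = 0$ together with $\langle \eta^0, F\eta^+\rangle = 0$ (the latter by orthogonality of distinct eigenspaces of a symmetric matrix) yields $\langle \eta, F\eta\rangle = \langle \eta^+, F\eta^+\rangle \geq 0$; against the hypothesis $\langle \eta, F\eta\rangle \leq -c|\eta|^2$ this forces $\eta = 0$. Hence $\dim V^- + (n_0 + n_+) \leq n$, i.e.\ $\dim V^- \leq n_-$, and the inequality $\dim V^+ \leq n_+$ follows by the symmetric argument. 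Combined with the equality above, both bounds must be saturated.

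There is no serious obstacle: the result is a standard avatar of Sylvester's law of inertia, and the short argument above uses only the strict definiteness of $q$ on $V^\pm$, so the specific constants $c > 0$ play no essential role beyond ensuring strict sign of the quadratic form.
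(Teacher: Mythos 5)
Your proof is correct and takes essentially the same approach as the paper: a trivial-intersection argument combined with dimension counting. The only cosmetic difference is that you intersect $V^-$ with the nonnegative spectral subspace $E^0\oplus E^+$, whereas the paper intersects the negative spectral subspace $W^-$ (your $E^-$) with $\ker F\oplus V^+$; your mirror-image arrangement is marginally cleaner since the strict inequality $\langle\eta,F\eta\rangle\leq -c|\eta|^2$ on $V^-$ forces $\eta=0$ directly without first having to argue that one component of the decomposition is nonzero.
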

\begin{proof}
Let $W^-$ resp.~$W^+$ be the maximal $F$-invariant subspace of $\RR^n$ such that $F|_{W^-}$ resp.~$F|_{W^+}$ is negative resp.~positive definite. Obviously,
\begin{align*}
W^- \oplus \ker F \oplus W^+ = \RR^n.
\end{align*}
Assume $\dim W^- > \dim V^-$. Then $W^- \cap (\ker F \oplus V^+)$ is non-trivial, so we can pick $w = v^0 + v^+ \in W^- \cap (\ker F \oplus V^+) \backslash \lbrace 0 \rbrace$ with $v^0 \in \ker F$ and $v^+ \in V^+$. Since $w \not \in \ker F$ we find $v^+ \neq 0$ and
\begin{align*}
\skp{w, F w}{} = \skp{v^+,F v^+}{} \geq c |v^+|^2 > 0,
\end{align*}
a contradiction to the negative definiteness of $F|_{W^-}$. Hence, $\dim W^- \leq \dim V^-$. In the same way one shows $\dim W^+ \leq \dim V^+$. Since $\dim W^- + \dim W^+ = \dim V^- + \dim V^+$ we find $\dim W^- = \dim V^-$ and $\dim W^+ = \dim V^+$.
\end{proof}
\begin{lem}
\label{assumplem2}
Let $\delta, \mu_i$ and $r_i$ be like in Lemma \ref{assupmlem1}. Then there is an orthogonal change of variables leaving \eqref{HsCL} invariant such that
\begin{align*}
Df(0) = \diag(\lambda_1, \ldots, \lambda_d)
%\gamma(\mu_k(0))(Df(0) - \mu_k(0) Dg(0)) = \diag(\lambda_1, \ldots, \lambda_d)
\end{align*}
with
\begin{align*}
\lambda_k = 0 \;\; \textnormal{and} \;\; \lambda_j < 0 < \lambda_{j^\prime} \; \; \textnormal{if} \; \; j < k < j^\prime,
\end{align*}
and
\begin{align*}
e_k = \frac{r_k(0)}{|r_k(0)|}
\end{align*}
where $e_k \in \CC^n$ is the unit vector along the $k$-th axis. With respect to this change of basis it holds
\begin{align}
\label{GNLcalc}
\skp{D\mu_k(0),r_k(0)}{} = |r_k(0)|^3 (\partial_k^2 f(0))_k.
%\skp{D\mu_k(0),r_k(0)}{} = \gamma(\mu_k(0)) |r_k(0)|^3 (\partial_k^2 f_k(0) - \mu_k(0) \partial_k^2 g_k(0))
\end{align}
\end{lem}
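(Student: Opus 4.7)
\medskip

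\noindent\textbf{Plan.} The key object is the symmetric matrix $F := Df(0) - \mu_k(0)Dg(0)$. First I would show that $F$ has signature $(k-1,1,n-k)$. Using the basis $\{r_i(0)\}_{i=1}^n$ from Lemma \ref{assupmlem1}, which is $Dg(0)$-orthonormal, one computes immediately that $r_i(0)^T F r_j(0) = (\mu_j(0)-\mu_k(0))\delta_{ij}$. In particular $F r_k(0) = 0$, and setting $V^\pm := \SP\{r_i(0) : i \lessgtr k\}$ yields a splitting $V^- \oplus \SP\{r_k(0)\} \oplus V^+ = \RR^n$ on which $F$ is respectively negative definite, zero, positive definite (use \eqref{eigorder}; positivity of the smallest eigenvalue of the Gram matrix in the standard basis supplies the constant $c$ required). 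Lemma \ref{splitlem1} then forces $\ker F = \SP\{r_k(0)\}$ with $k-1$ negative and $n-k$ positive eigenvalues of $F$.

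\medskip

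\noindent Next I would apply the spectral theorem to $F$. Because $\ker F$ is one-dimensional and $r_k(0)/|r_k(0)|$ spans it (up to sign), I can build an orthogonal matrix $O$ whose $k$-th column is exactly $r_k(0)/|r_k(0)|$ and whose remaining columns are eigenvectors of $F$, ordered so that the negative eigenvalues occupy slots $1,\dots,k-1$ and the positive ones slots $k+1,\dots,n$; then $O^T F O = \diag(\lambda_1,\ldots,\lambda_n)$ with the required sign pattern and $\lambda_k = 0$.

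\medskip

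\noindent The change of variables $v = O\tilde v$ in \eqref{HsCL}, followed by multiplication from the left with $O^T$, leaves the right-hand side invariant (because now $B = \ID_n$ and $O$ is orthogonal) and replaces $(f,g)$ by $(\tilde f,\tilde g) := (O^T f(O\cdot),\,O^T g(O\cdot))$, whose derivatives at $0$ satisfy $D\tilde f(0) - \mu_k(0)D\tilde g(0) = O^T F O$. Moreover $\tilde r_k(0) = O^T r_k(0) = |r_k(0)|\, e_k$, so after passing to the new variables (keeping the notation $f,g,r_k$) both displayed conditions of the lemma are met.

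\medskip

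\noindent Finally, to establish \eqref{GNLcalc}, I would differentiate the identity $Df(v)r_k(v) = \mu_k(v)Dg(v)r_k(v)$ in the direction $r_k(0)$ at $v = 0$, then pair from the left with $r_k(0)^T$. Because $F = Df(0) - \mu_k(0)Dg(0)$ is symmetric and annihilates $r_k(0)$, the two terms $r_k(0)^T Df(0) Dr_k(0) r_k(0)$ and $\mu_k(0) r_k(0)^T Dg(0) Dr_k(0) r_k(0)$ cancel; using $r_k(0)^T Dg(0) r_k(0) = 1$ one is left with
\[
\skp{D\mu_k(0),r_k(0)}{} = r_k(0)^T D^2 f(0)[r_k(0),r_k(0)] - \mu_k(0)\, r_k(0)^T D^2 g(0)[r_k(0),r_k(0)].
\]
Substituting $r_k(0) = |r_k(0)| e_k$ into the trilinear forms collapses each term to a single pure $k$-direction derivative, producing the factor $|r_k(0)|^3$ and yielding \eqref{GNLcalc}.

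\medskip

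\noindent The only mild obstacle is organizing the spectral decomposition so that the kernel direction coincides with $e_k$ \emph{and} the ordering of the non-zero eigenvalues is compatible with \eqref{eigorder}; all other steps are linear algebra and a single differentiation.
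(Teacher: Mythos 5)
Your proposal is correct and follows essentially the same route as the paper: identify $F = Df(0)-\mu_k(0)Dg(0)$, prove definiteness on $V^\pm = \SP\{r_i(0):i\lessgtr k\}$, invoke Lemma \ref{splitlem1} to pin down the eigenvalue signs, then diagonalize by an orthogonal matrix whose $k$-th column is $r_k(0)/|r_k(0)|$, and finish by differentiating the eigenvalue relation and killing the $Dr_k$ terms via $Fr_k(0)=0$. The only cosmetic difference is that you obtain the definiteness of $F$ on $V^\pm$ by direct evaluation of the bilinear form on the $Dg(0)$-orthonormal basis $\{r_i(0)\}$, whereas the paper conjugates by $Dg(0)^{1/2}$ first; and for \eqref{GNLcalc} you differentiate $Df(v)r_k(v)=\mu_k(v)Dg(v)r_k(v)$ and pair with $r_k(0)^T$ afterwards, while the paper differentiates the already-contracted scalar identity $r_k(v)^T(Df(v)-\mu_k(v)Dg(v))r_k(v)=0$ — these two calculations are the same up to reordering.
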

\begin{proof}
Set
\begin{align*}
V^- := \SP \lbrace r_j(0) \rbrace_{1\leq j<k} \;\; \textnormal{and} \; \; V^+ := \SP \lbrace r_j(0) \rbrace_{k<j\leq d}.
\end{align*}
The matrix $F:= Df(0)$ is symmetric and $\ker F = \SP \lbrace r_k(0) \rbrace$. Then $V^- \oplus \ker F \oplus V^+ = \RR^n$. On $Dg(0)^{1/2} V^-$ resp.~$Dg(0)^{1/2} V^+$ the matrix $Dg(0)^{-1/2} F Dg(0)^{-1/2}$ is negative resp.~positive definite. Because $Dg(0)$ is positive definite, too, this implies $F$ being negative resp.~positive definite on $V^-$ resp.~$V^+$. An application of Lemma \ref{splitlem1} shows that there is an orthonormal set of eigenvectors of $F$ with the $k$-th eigenvector equal to $r_k/|r_k|$ inducing a change of basis that diagonalizes $F$ in the claimed way.\\
To show \eqref{GNLcalc} use the above change of basis and apply the differential operator $\skp{r_k(0),D_v}{}=|r_k| \partial_k$ to the identity \cite{S94}
\begin{align*}
r_k(v)^t (Df(v) - \mu_k(v) Dg(v))r_k(v) = 0 \;\; \textnormal{for} \; \; v \in B_\delta(0).
\end{align*}
\end{proof}
From now on fix $\delta, \mu_i, r_i$ like in Lemma \ref{assupmlem1} and change basis like in Lemma \ref{assumplem2}.
For small enough $\varepsilon_0 \in(0,1]$ consider a smooth family of triples 
\begin{align*}
\lbrace(v^-_\varepsilon, v^+_\varepsilon, s(\varepsilon))\rbrace_{\varepsilon \in [0,\varepsilon_0]}
\end{align*}
such that:
\begin{itemize}
\item[(i)] For $\varepsilon > 0$ the triple $(v^-_\varepsilon, v^+_\varepsilon, s(\varepsilon))$ is a $k$-shock of $g(v)_t + f(v)_x = 0$;
\item[(ii)] $s(0) = 0$ and $s^\prime(0) = 0$;
\item[(iii)] $v^\pm_\varepsilon = \pm\varepsilon(e_k+\OO(\varepsilon))$;
\item[(iv)] $v^-_\varepsilon$ and $v^+_\varepsilon$ are the only solutions of
\begin{align*}
f(v) - s(\varepsilon) g(v) = f(v^-_\varepsilon) -s(\varepsilon) g(v^-_\varepsilon)
\end{align*}
for $v \in B_{3 \varepsilon}(0)$.
\end{itemize}
\begin{rem}
One can construct such a family of $k$-shocks by the usual (local) structure of the Rankine-Hugoniot locus
\begin{align*}
\lbrace v \in \mathcal{V}|\; \exists s \in \RR: f(v)-sg(v) = f(v^-_\varepsilon) -s g(v^-_\varepsilon) \rbrace
\end{align*}
at $v^-_\varepsilon$ (see \cite{L73}, and for a proof e.g.~\cite{E10}).
\end{rem}
For simplicity we only consider shocks with end-states symmetrically distributed over the $k$-th direction, i.e.
\begin{itemize}
\item[(iii)$^\prime$] $(v^\pm_\varepsilon)_k = \pm \varepsilon.$
\end{itemize}
Note $s(\varepsilon) \in (-1,1)$ since $(v^-_\varepsilon, v^+_\varepsilon, s(\varepsilon))$ is a $k$-shock.
\begin{defi}
We call $\phi \in C^\infty(\RR)$ a shock profile for a shock $(v^-,v^+,s) \in \mathcal{V} \times \mathcal{V} \in (-1,1)$ of $g(v)_t + f(v)_x = 0$ if $\phi$ is a solution of the boundary value problem
\begin{align}
\label{profileequation}
\phi^\prime = f_s(\phi) - f_s(v^-), \; \; \phi(\pm \infty) = v^\pm.
\end{align}
\end{defi}
For $\varepsilon \in [0,\varepsilon_0]$ set
\begin{align*}
c_\varepsilon := f_{s(\varepsilon)}(v^-_\varepsilon) \;\; \textnormal{and} \;\; \gamma_\varepsilon = \gamma(s(\varepsilon)).
\end{align*}
Viewing the construction of shock profiles by Majda and Pego \cite{MP85} from a geometric perspective one can obtain a smooth family of shock profiles $\lbrace \phi_\varepsilon \rbrace_\varepsilon$ for the family of shocks $\lbrace (v^-_\varepsilon,v^+_\varepsilon,s(\varepsilon) \rbrace_\varepsilon$ by a slow-fast decomposition like in \cite{FS02}. To make the slow-fast structure apparent set $\varepsilon u = \phi$ in \eqref{profileequation} for the shock $(v^-_\varepsilon, v^+_\varepsilon, s(\varepsilon))$. Then $u$ must solve
\begin{align}
\label{slowprofile1}
u^\prime = \tilde{f}_\varepsilon(u), \;\; u(\pm \varepsilon) = u^\pm_\varepsilon
\end{align}
where
\begin{align*}
\tilde{f}_\varepsilon(u) := \varepsilon^{-1} f_{s(\varepsilon)}(\varepsilon u) - \varepsilon^{-1} c_\varepsilon \;\; \textnormal{for} \;\; \varepsilon \in (0,\varepsilon_0], u \in \overline{B_3(0)}
\end{align*}
and
\begin{align*}
u_\varepsilon^\pm := \varepsilon^{-1} v^\pm_\varepsilon.
\end{align*}
A Taylor expansion shows
\begin{align*}
\varepsilon^{-1} f_{s(\varepsilon)}(\varepsilon u) = & \diag(\lambda_1, \ldots, \lambda_d) u + \frac{\varepsilon}{2}D^2f(0)\lbrack u,u \rbrack\\
& + \OO(\varepsilon^2)
\end{align*}
and
\begin{align*}
\varepsilon^{-1} c_\varepsilon = & \OO(\varepsilon) \;\; \textnormal{with} \;\; (\varepsilon^{-1} c_\varepsilon)_k = \varepsilon a + \OO(\varepsilon^2)
\end{align*}
for
\begin{align*}
a := \frac{1}{2}(\partial_k^2f(0))_k < 0.
\end{align*}
Hence, $\tilde{f}_\varepsilon(u)$ and $u_\varepsilon^\pm$ extend smoothly onto $\varepsilon = 0$ and \eqref{slowprofile1} is in Fenichel normal form \cite{F79}. The fast variables are $u_j$ for $j \neq k$, the slow variable is $u_k$. The set $\lbrace u_j =0 \rbrace_{j \neq k}$ is a normally hyperbolic critical manifold for \eqref{slowprofile1} and Fenichel theory shows (see also \cite{J95,S91}):
\begin{lem}
\label{slowlem}
For $\varepsilon_0$ small enough there exists a smooth function $u = u_\varepsilon(\tau)$ for $\varepsilon \in [0,\varepsilon_0], \tau \in J:=[-1,1],$ with $u_\varepsilon(\tau)_j = \OO(\varepsilon)$ if $j \neq k$ and $(u_\varepsilon(\tau))_k = \tau$ defining a slow manifold $u_\varepsilon(J)$ of \eqref{slowprofile1} with slow equations
\begin{align}
\label{slowprofileequ}
\tau^\prime = \varepsilon (1-\tau^2)h_\varepsilon(\tau)
\end{align}
for a smooth function $h=h_\varepsilon(\tau) = (- a + \OO(\varepsilon))$.
\end{lem}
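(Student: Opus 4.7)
The plan is to apply Fenichel's geometric singular perturbation theorem \cite{F79} to the family $u^\prime = F_\varepsilon(u)$ on $\overline{B_3(0)}$, and then to extract the slow equation by restriction. Splitting coordinates as $u = (u_\perp, u_k)$ with $u_\perp = (u_j)_{j\neq k}$, the expansions preceding the lemma give
\begin{align*}
(F_\varepsilon(u))_\perp = \gamma_0 \diag(\lambda_j)_{j\neq k}\, u_\perp + \OO(\varepsilon), \qquad (F_\varepsilon(u))_k = \OO(\varepsilon),
\end{align*}
so the system is already in standard slow--fast form, with $u_k$ evolving slowly and $u_\perp$ fast. The critical set $M_0 = \lbrace u_\perp = 0 \rbrace$ is a normally hyperbolic invariant manifold of $u^\prime = F_0(u)$, because by Lemma \ref{assumplem2} the transverse linearization $\gamma_0 \diag(\lambda_j)_{j\neq k}$ has no zero eigenvalues (indeed $\dim V^- = k-1$ stable and $\dim V^+ = d-k$ unstable directions). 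Fenichel's theorem then yields, for $\varepsilon_0$ sufficiently small, a smooth family of locally invariant slow manifolds, presentable as graphs $u_\perp = H_\varepsilon(\tau)$ over the slow variable $\tau = u_k \in J$ with $H_\varepsilon(\tau) = \OO(\varepsilon)$, and jointly smooth in $(\tau,\varepsilon)$. Reshuffling coordinates so that the $k$-th entry is $\tau$ furnishes the claimed $u_\varepsilon(\tau)$.

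Next I would compute $\Phi(\tau, \varepsilon) := (F_\varepsilon(u_\varepsilon(\tau)))_k$, which governs the slow flow $\tau^\prime = \Phi(\tau,\varepsilon)$. Two vanishing properties cut it down. First, since $F_0$ vanishes identically on $M_0$, one has $\Phi(\tau, 0) \equiv 0$, whence $\Phi(\tau,\varepsilon) = \varepsilon \Psi(\tau,\varepsilon)$ with $\Psi$ smooth. Second, by the Rankine--Hugoniot condition together with $c_\varepsilon = f_\varepsilon(v^-_\varepsilon)$, one has $f_\varepsilon(v^\pm_\varepsilon) = c_\varepsilon$, so the rescaled endstates $u^\pm_\varepsilon = \varepsilon^{-1} v^\pm_\varepsilon$ are equilibria of $u^\prime = F_\varepsilon(u)$, located at $\tau = \pm 1$ by (iii)$^\prime$. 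These equilibria lie in an $\OO(\varepsilon)$-neighbourhood of $M_0$, so local uniqueness of the Fenichel slow manifold forces $u_\varepsilon(\pm 1) = u^\pm_\varepsilon$, and therefore $\Phi(\pm 1,\varepsilon) = 0$ for every $\varepsilon \in [0,\varepsilon_0]$. Dividing out these simple zeros gives
\begin{align*}
\Phi(\tau,\varepsilon) = \varepsilon (1-\tau^2)\, h_\varepsilon(\tau)
\end{align*}
with $h_\varepsilon(\tau)$ smooth in $(\tau,\varepsilon)$, which is the claimed form.

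It remains to identify $h_0$. Plugging $u_\varepsilon(\tau) = \tau e_k + \OO(\varepsilon)$ into the $k$-th component using the Taylor expansion preceding the lemma, the linear contribution $\gamma_0 \lambda_k \tau$ vanishes since $\lambda_k = 0$, while the $\OO(\varepsilon)$ correction in $u_\varepsilon(\tau)_\perp$ feeds back into $(F_\varepsilon(u))_k$ only at order $\OO(\varepsilon^2)$ because the linear part $\gamma_0 \diag(\lambda_j)$ is diagonal and thus decouples the $k$-th component from $u_\perp$. The quadratic term yields $\varepsilon \gamma_0 a \tau^2 + \OO(\varepsilon^2)$ via \eqref{GNLcalc}, and $\varepsilon^{-1}(c_\varepsilon)_k = \varepsilon \gamma_0 a + \OO(\varepsilon^2)$, so
\begin{align*}
\Phi(\tau,\varepsilon) = -\varepsilon \gamma_0 a (1-\tau^2) + \OO(\varepsilon^2),
\end{align*}
and hence $h_\varepsilon(\tau) = -\gamma_0 a + \OO(\varepsilon)$. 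The main subtlety is not any single calculation but the step that extracts $(1-\tau^2)$ as an \emph{exact} factor rather than as a leading-order cancellation: this is precisely what the Rankine--Hugoniot equilibrium identification supplies, and is the one ingredient where purely local Taylor expansions in $\varepsilon$ would not suffice.
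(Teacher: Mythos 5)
Your proof is correct and is essentially a detailed version of the paper's one-line appeal to Fenichel theory; your key extra contribution is the Rankine--Hugoniot argument that makes $(1-\tau^2)$ an \emph{exact} factor rather than a leading-order cancellation, which is indeed the nontrivial step the paper glosses over. One small imprecision worth flagging: Fenichel slow manifolds over a compact slow interval are not literally unique (only unique up to exponentially small corrections), so the cleaner reason $u^\pm_\varepsilon$ lie on $u_\varepsilon(J)$ is not ``local uniqueness'' but rather that any Fenichel slow manifold must contain every orbit that remains in a tubular neighbourhood of the critical manifold for all time: the transverse linearization $\gamma_0 \diag(\lambda_j)_{j\neq k}$ is hyperbolic, so any bounded orbit off the slow manifold would have to exit the tube either forward or backward in time, while the equilibria $u^\pm_\varepsilon$ (which are the only equilibria in $\overline{B_3(0)}$ by assumption (iv), located at $\tau = \pm1$ by (iii)$^\prime$) obviously do not. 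With that adjustment the argument is complete and matches the paper's intent.
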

We obtain a smooth family of shock profiles $\lbrace \phi_\varepsilon \rbrace_\varepsilon$ connecting $v^-_\varepsilon$ to $v^+_\varepsilon$ by letting $\phi_\varepsilon$ be the solution of
\begin{align*}
\phi^\prime = f_{s(\varepsilon)}(\phi) - c_\varepsilon, \; \; \phi(0) = \varepsilon u_\varepsilon(0).
\end{align*}
They correspond to solutions $\chi_\varepsilon$ of the slow equations
\begin{align}
\label{chi1}
\chi_\varepsilon^\prime = \varepsilon (1- \chi_\varepsilon^2)h_\varepsilon(\chi_\varepsilon), \;\; \chi_\varepsilon(0) = 0,
\end{align}
via
\begin{align*}
\phi_\varepsilon = \varepsilon u_\varepsilon(\chi_\varepsilon).
\end{align*}
The profiles $\phi_\varepsilon$ define travelling wave solutions $v_\varepsilon$ of \eqref{HsCL} with
\begin{align*}
v_\varepsilon(t,x) = \phi_\varepsilon(\gamma_\varepsilon(x-s(\varepsilon)t)).
\end{align*}
We apply a Lorentz-boost of speed $s(\varepsilon)$ to move into the rest frame of $v_\varepsilon$. Then \eqref{HsCL} reads
\begin{align}
\label{restframeHsCL}
g_{s(\varepsilon)}(v)_t + f_{s(\varepsilon)}(v)_x = \square v.
\end{align}
Linearizing \eqref{restframeHsCL} at $v_\varepsilon(t,x) = \phi_\varepsilon(x)$ yields
\begin{align}
\label{linearHsCL}
Dg_{s(\varepsilon)}(\phi_\varepsilon) v_t + (Df_{s(\varepsilon)}(\phi_\varepsilon) v)_x = \square v.
\end{align}
The Laplace modes $e^{\kappa t} p(x), \kappa \in \CC,$ of \eqref{linearHsCL} are defined by solutions $p$ of
\begin{align*}
(\kappa^2 \ID_d + \kappa Dg_{s(\varepsilon)}(\phi_\varepsilon))p + (Df_{s(\varepsilon)}(\phi_\varepsilon) p)_x = p_{xx}.
\end{align*}
Setting $q = p_x - Df_{s(\varepsilon)}(\phi_\varepsilon) p$ the above Laplace-mode differential equation has a first-order formulation
\begin{align}
\label{LapHsCL}
\xi^\prime = A_{s(\varepsilon),\kappa}(\phi_\varepsilon) \xi
\end{align}
for $\xi = (p,q) \in \CC^{2n}$ and
\begin{align*}
A_{s,\kappa}(v) = \begin{pmatrix}
Df_{s}(v) & \ID_n\\ \kappa^2\ID_n + \kappa Dg_{s}(v) & 0 \end{pmatrix} \in \CC^{2n \times 2n}.
\end{align*}
The study of the non-autonomous, parameter depending ordinary differential equation \eqref{LapHsCL} is the main objective of this paper.

We will follow the work of Freist{\"u}hler and Szmolyan on hyper\-bolic-para\-bolic conservation laws in \cite{FS02} and \cite{FS10}: Instead of treating the Laplace-mode equation \eqref{LapHsCL} as a non-autonomous system one couples \eqref{LapHsCL} to the slow profile equation \eqref{slowprofileequ} and obtains an autonomous system in $\tau$ and $\xi$ which depends on the parameters $\varepsilon$ and $\kappa$. Then invariant manifold theory is applied: Inspired by the construction of the shock  profiles $\phi_\varepsilon$ one uses normally hyperbolic theory \cite{F71,HPS77}, most times in the form of geo\-metric singular perturbation theory \cite{F79,J95,S91}. Making use of the linear structure of \eqref{LapHsCL} the evolution of vector spaces is tracked by the associated Grassmannian flows (c.f.~\cite{AGJ90}, see Section \ref{Sec3} for some basics concerning flows on Grassmann manifolds). Since the resulting state space is invariant and compact normally hyperbolic theory yields unique invariant manifolds perturbing smoothly from the initial manifolds. By considering three spectral regimes (see Section \ref{Sec4}) in combination with various rescalings a detailed construction of Evans bundles for the profiles $\phi_\varepsilon$ is achieved.

The use of spectral theory to answer questions on the stability of travelling waves has become popular in the last five decades starting with the fundamental work of Evans on nerve axon equations \cite{E74}. The \textit{Evans function method} was further developed by Jones \cite{J84} and Alexander et al.~\cite{AGJ90}, and since then Evans function theory has been applied successfully to reaction-diffusion equations, e.g.~ \cite{AGJ90,GJ91,GJ912,J84}, viscous conservations laws, e.g.~\cite{FS02,GZ98,JGK93,MZ04,PZ04,ZH98}, and hyperbolic relaxation systems, e.g.~\cite{MZ02,MZ05,PZ04}, yielding stability results in many different contexts. When studying the stability of travelling wave solutions associated to shocks of, e.g.,  viscous conservation laws one is met with the problem of the absence of a spectral gap in the sense that the essential spectrum touches the imaginary axis. At first glance this limits the scope of results of an ansatz by Evans function theory since an analytic extension of the Evans function into the essential spectrum is necessary \cite{GJ91,PW92}. But after the discovery that Evans bundles of shock profiles have suitable analytic extensions \cite{GZ98,KS98} it was Zumbrun and collaborators that showed the profound importance of spectral stability: Under natural assumptions spectral stability implies the non-linear stability of shock profiles for hyperbolic-parabolic conservation laws \cite{MZ04,ZH98} and hyperbolic relaxation systems \cite{MZ02,MZ05}. Whether such an implication also holds for hyperbolically regularized systems of conservation laws such as \eqref{HsCL} is an open question. However the reader may find a discussion of the associated resolvent kernel in section \ref{secresker}. We view our results as the first steps towards other stability results such as linearized and orbital stability.

Parallel to developments within the Evans function framework non-stan\-dard energy methods are used in the stability analysis of travelling waves. Along the lines of Sattinger \cite{S76}, Matsumura and Nishihara \cite{MN85} and Goodman \cite{G86} this ansatz was employed to great success, e.g.~\cite{F00,KMN86,L97,SX93,U09}, often solving directly the non-linear stability problem. The proofs are tailor-made for the specific problem at hand yielding strong results in the particular situation but are often difficult to generalize.

Despite the similarities to hyperbolic relaxation systems our result is not contained in the results on (spectral) stability of shock profiles for such systems \cite{H03,L03,MZ02,MZ05,MZ09,PZ04,U09}. This is due to the fact that in general $g$ need not be the identity, and hence it seems not possible to transform \eqref{HsCL} into a relaxation equation of Jin-Xin-type \cite{JX95} (c.f.~\cite{LMRS16,U09}). Furthermore, in the setting of special relativity as in \cite{FT14} we need to allow for $g \neq \id$ since by Lemma \ref{lemLorentz} the assumption $g = \id$ is in general not invariant under a Lorentz boost. Finally, we want to mention the work of Lattanzio et al.~\cite{LMRS16} where stability of travelling waves for the Allan-Cahn model with relaxation is discussed. The underlying model being a scalar semi-linear damped wave equation they also arrive at a quadratic non-linear eigenvalue problem. The dispersion curves of the latter equation lie entirely in the left half of the complex plane and do not touch the imaginary axis which is a major qualitative difference to the models considered in this paper.
\section{Domain of consistent splitting and main theorem}
\label{Sec2}
We begin this section with a lemma on dispersion curves for certain families of holomorphic, matrix-valued functions.
\begin{defi}
Let $\mathcal{A} = \lbrace A(\kappa) \rbrace_{\kappa \in \CC} \subset \CC^{2n \times 2n}$ be a smooth family of matrices and denote by $U(\kappa)$ resp.~$S(\kappa)$ the unstable resp.~stable subspace of $A(\kappa)$. Then a domain of constant splitting $\tilde{\Lambda}$ for $\mathcal{A}$ is an open subset of $\CC$ with
\begin{align*}
\dim U(\kappa) = n = \dim S(\kappa) \;\; \textnormal{for all} \;\; \kappa \in \tilde{\Lambda}.
\end{align*}
%and the property that there exists a $c\geq 0$ with
%\begin{align*}
%\lbrace \kappa \in \CC| \; \textnormal{Re}\, \kappa > c \rbrace \subset \tilde{\Lambda}.
%\end{align*}
\end{defi}
\begin{lem}
\label{lemdisperion}
Let $F,G \in \RR^{n\times n}$ satisfy the following assumptions:
\begin{itemize}
\item[B1] $F,G$ are symmetric, and $G$ is positive definite;
\item[B2] $G \pm F$ is positive definite.
\end{itemize}
For $\kappa \in \CC$ define
\begin{align*}
A(\kappa) := A_{F,G}(\kappa) := \begin{pmatrix}
F & \ID_n\\ \kappa^2 \ID_n + \kappa G & 0
\end{pmatrix}
\end{align*}
and set $\mathcal{A} := \lbrace A(\kappa) \rbrace_{\kappa \in \CC}$. Then
\begin{align}
\label{domaindispersion}
\tilde{\Lambda} :=  M_\nu := \left\lbrace \kappa \in \CC\bigg\vert \; - \nu \frac{(\textnormal{Im}\, \kappa)^2}{1 + (\textnormal{Im} \kappa)^2} < \textnormal{Re}\, \kappa \right\rbrace
\end{align}
is a domain of constant splitting for $\mathcal{A}$. Furthermore, zero is not an element of any domain of constant splitting for $\mathcal{A}$.
\end{lem}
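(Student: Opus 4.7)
The plan is to reduce the hyperbolicity question to a scalar dispersion relation and analyze it quantitatively. First I would characterize when $A^\pm(\kappa)$ admits a purely imaginary eigenvalue $i\eta$ (with $\eta\in\RR$). Writing the eigenvector as $(p,q)^T\in\CC^{2n}$, the block structure forces $F^\pm p + q = i\eta p$ and $(\kappa^2\ID + \kappa G^\pm)p = i\eta q$; eliminating $q$ gives
\begin{align*}
\bigl[(\kappa^2+\eta^2)\ID + \kappa G^\pm + i\eta F^\pm\bigr]p = 0.
\end{align*}
Taking the Hermitian inner product with $p$ yields the necessary scalar condition
\begin{align*}
\kappa^2 + \kappa g + \eta^2 + i\eta f = 0,
\end{align*}
where $g := \langle p,G^\pm p\rangle/|p|^2$ and $f := \langle p,F^\pm p\rangle/|p|^2$. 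By B1 the quantity $g$ lies in a compact positive interval, and B2 implies $|f|\leq \rho g$ for some $\rho\in[0,1)$ independent of $p$ and of the sign (indeed $\rho$ is the larger of the norms of $(G^\pm)^{-1/2}F^\pm(G^\pm)^{-1/2}$).

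Next I would analyze the scalar equation as a quadratic in $\kappa$ parametrized by $(\eta,f,g)$. Using $\kappa = \tfrac12\bigl(-g\pm\sqrt{g^2-4\eta^2-4i\eta f}\bigr)$ and expanding near $\eta=0$ gives
\begin{align*}
\kappa = -\frac{g^2-f^2}{g^3}\eta^2 - i\frac{f}{g}\eta + \OO(\eta^3),
\end{align*}
so every dispersion branch is tangent to the imaginary axis at the origin with $\textnormal{Re}\,\kappa \sim -C(\textnormal{Im}\,\kappa)^2$ for a positive constant $C$ bounded below uniformly in admissible $(f,g)$ — this is precisely where $\rho<1$ enters. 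For $|\eta|\to\infty$ the asymptotic analysis gives $\kappa \sim \pm i\eta - \tfrac12(g\pm f)$, so $\textnormal{Re}\,\kappa$ approaches a strictly negative limit, again uniformly. Interpolating the two regimes along the explicit quadratic branch yields a constant $\nu>0$ such that every solution of the scalar dispersion relation satisfies
\begin{align*}
\textnormal{Re}\,\kappa \leq -\nu\frac{(\textnormal{Im}\,\kappa)^2}{1+(\textnormal{Im}\,\kappa)^2}.
\end{align*}
Hence $A^\pm(\kappa)$ has no purely imaginary eigenvalue for $\kappa\in M_\nu$.

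For the splitting count I would fix $\kappa$ large positive real. Rescaling $\zeta=\kappa\tilde\zeta$ in the characteristic equation $\det(\zeta^2\ID-\zeta F^\pm-\kappa^2\ID-\kappa G^\pm)=0$ reduces at leading order to $\det(\tilde\zeta^2\ID-\ID)=0$, so $A^\pm(\kappa)$ has $n$ eigenvalues near $+\kappa$ and $n$ near $-\kappa$; in particular $\dim U^\pm(\kappa)=\dim S^\pm(\kappa)=n$ for $\kappa\gg 0$. Since $M_\nu$ is open, connected (obvious from its sublevel-set description) and contains the right half-plane, and since by the previous paragraph no eigenvalue of $A^\pm(\kappa)$ crosses the imaginary axis inside $M_\nu$, these dimension counts persist throughout $M_\nu$. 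I then take $\Lambda(\mathcal{A})$ to be the maximal open superset of $M_\nu$ on which the splitting is preserved. Finally, $\kappa=0$ cannot lie in $\Lambda$, as $A^\pm(0)(p,q)^T = (F^\pm p + q,\,0)^T$ has the $n$-dimensional kernel $\{(p,-F^\pm p):p\in\CC^n\}$, so $A^\pm(0)$ is not hyperbolic.

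The principal difficulty is the uniformity in the second step. The quantities $f$ and $g$ are Rayleigh quotients of a vector $p$ that itself varies with $\kappa$, so the desired bound must hold uniformly over the entire admissible set $\{(f,g):|f|\leq\rho g,\ g\in[g_{\min},g_{\max}]\}$. The tangential $\eta\to 0$ expansion and the horizontal $|\eta|\to\infty$ asymptote both point in the right direction, but combining them with one explicit $\nu$ requires a careful global estimate of $\textnormal{Re}\,\kappa(\eta)$ along the branch of the quadratic formula.
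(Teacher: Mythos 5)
Your proof is correct in its essentials, but it takes a genuinely different route from the paper's. The paper converts the dispersion condition to $\kappa\in\sigma(i\xi K+Q)$ for a $2n\times 2n$ pair $(K,Q)$ and then invokes the Shizuta--Kawashima criterion: it verifies SK1 (existence of a compensator $K_0$, here $K_0=\begin{pmatrix}G&F\\F&G\end{pmatrix}$, whose positive definiteness uses sub-characteristicity) and SK2 (a genuine-coupling condition), obtains $\RE\sigma(i\xi K+Q)\leq-\theta\,\xi^2/(1+\xi^2)$ in terms of the spectral parameter $\xi$, and then needs a separate regular-perturbation step near $\xi=0$ to translate this into a bound in $\textnormal{Im}\,\kappa$. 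You instead pair the matrix eigenvalue equation with the eigenvector itself, reduce to the scalar quadratic $\kappa^2+g\kappa+\eta^2+i\eta f=0$ with Rayleigh quotients $g,f$ ranging over a compact admissible set $\lbrace |f|\leq\rho g,\ g\in[g_{\min},g_{\max}]\rbrace$ with $\rho<1$, and read off the same dispersion geometry from the explicit quadratic formula. The two routes compute the same characteristic determinant $\det((\kappa^2+\xi^2)\ID+\kappa G+i\xi F)=0$; the paper packages it abstractly via SK theory, whereas your reduction is elementary and makes the needed coefficients ($g^2-f^2\geq(1-\rho^2)g_{\min}^2$, the large-$\eta$ limits $-\tfrac12(g\pm f)\leq-\tfrac12(1-\rho)g_{\min}$) directly visible, at the modest cost of a small compactness argument to unify the $\eta\to0$ and $|\eta|\to\infty$ regimes into one $\nu$. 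You rightly flag that last interpolation as the place requiring care; it does go through — for the branch of interest one has $|\textnormal{Im}\,\kappa|\leq C(1+|\eta|)$ globally and $|\textnormal{Im}\,\kappa|\leq C|\eta|$ for $|\eta|\leq 1$, and $\RE\kappa\leq-c\min(1,\eta^2)$ uniformly once you also note (as you implicitly do) that the quadratic has no purely imaginary roots for $\eta\neq0$ because $\pm1+f/g\neq0$. The remainder of your argument (the $n$/$n$ splitting for $\kappa\gg0$ via the rescaled characteristic equation, propagation through the connected set $M_\nu$, and $\dim\ker A^\pm(0)=n$) matches the paper's.
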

\begin{proof}
We invoke the results on dissipative symmetric hyperbolic-hyperbolic systems as found in \cite{FS21}. Let $\kappa \in \CC$. Then $A(\kappa)$ is not hyperbolic iff for some $\xi \in \RR$
\begin{align*}
i\xi \in \sigma(A(\kappa))
\end{align*}
which is equivalent to the dispersion relation
\begin{align}
\label{disprel2}
\det (\kappa G + i \xi F + \kappa^2 \ID_n + \xi^2 \ID_n) = 0.
\end{align}
But \eqref{disprel2} holds iff $e^{\kappa t + i \xi x}$ solves
\begin{align*}
G v_t + F v_x = \square v.
\end{align*}
By Theorem 3 and Proposition 1 in \cite{FS21} there exists a constant $\nu >0$ such that the last statement implies
\begin{align*}
\RE \kappa \leq - \nu \frac{\xi^2}{1+\xi^2}.
\end{align*}
As is easy to see, there exists a constant $c>0$ such that for all solutions $(\kappa,\xi) \in \CC \times \RR$ of the dispersion relation \eqref{disprel2} it holds
\begin{align*}
|\kappa| \leq c |\xi|,
\end{align*}
in particular
\begin{align*}
|\textnormal{Im}\, \kappa| \leq c |\xi|.
\end{align*}
By shrinking $\nu>0$ if necessary we find that $\kappa \in M_\nu$ implies
\begin{align*}
i\RR \cap \sigma(A(\kappa)) = \emptyset.
\end{align*}
Therefore, by continuity the real parts of the eigenvalues of $A(\kappa)$ may not change signs on $M_\nu$, and considering large $\kappa \in (0,\infty)$ one finds that the stable space $S(\kappa)$ and the unstable space $U(\kappa)$ of $A(\kappa)$ satisfy
\begin{align*}
\dim S(\kappa) = n = \dim U(\kappa)
\end{align*}
if $\kappa \in M_\nu$.
\end{proof}
\begin{defi}
(c.f.~\cite{AGJ90}) Let $\mathcal{A}= \lbrace (A^-(\kappa),A^+(\kappa)) \rbrace_{\kappa \in \CC} \subset (\CC^{2n \times 2n})^2$ be a smooth family of pairs of matrices, and set $\mathcal{A}^\pm = \lbrace A^\pm(\kappa) \rbrace_{\kappa \in \CC}$. Let $\tilde{\Lambda}^\pm$ be a domain of constant splitting for $\mathcal{A}^\pm$. Then a domain of consistent splitting $\Lambda \subset \CC$ for $\mathcal{A}$ is defined as the intersection
\begin{align*}
\Lambda = \tilde{\Lambda}^- \cap \tilde{\Lambda}^+.
\end{align*}
\end{defi}
Suppose we are given a shock $(v^-,v^+,s) \in \mathcal{V} \times \mathcal{V} \times (-1,1)$ of $g(v)_t + f(v)_x = 0$ with end-states $v^\pm$ connected by a smooth shock profile $\phi:\RR \rightarrow \mathcal{V}$. Consider the system
\begin{align}
\label{varequprof}
\xi^\prime = A_{s,\kappa}(\phi(x)) \xi \;\; \textnormal{on} \;\; \CC^{2n}.
\end{align}
Set at the end-states
\begin{align*}
A^\pm_{\kappa} := A_{s,\kappa}(v^\pm) = A_{Df_s^\pm, Dg_s^\pm}(\kappa)
\end{align*}
with
\begin{align*}
Df^\pm_s := Df_s(v^\pm) \;\; \textnormal{and} \;\; Dg_s^\pm := Dg_s(v^\pm),
\end{align*}
and let $\mathcal{A} := \lbrace (A^-_{\kappa}, A^+_{\kappa}) \rbrace_{\kappa \in \CC}$. Then $Df^\pm_s, Dg^\pm_s$ satisfy B1, B2 of Lemma \ref{lemdisperion}, and as a direct consequence we obtain:
\begin{cor}
\label{cordispersion}
There exists $\nu \in (0,\infty)$ such that $\Lambda:=M_\nu$ is a domain of consistent splitting for $\mathcal{A}$. In particular, setting $\CH^+:= \lbrace \kappa \in \CC|\; \RE \kappa \geq 0 \rbrace$ we have
\begin{align*}
\CH^+ \backslash \lbrace 0 \rbrace \subset \Lambda.
\end{align*}
Note $0 \not \in \Lambda$.
\end{cor}
Let $\Lambda$ be a domain of consistent splitting as found in Corollary \ref{cordispersion}. For $\kappa \in \Lambda$ there are unique $n$-dimensional subspaces \cite{C65}
\begin{align*}
\EH^\pm(\kappa) \subset \GG^{2n}_n(\CC)
\end{align*}
such that any solution $\xi$ of \eqref{varequprof} with $\xi(0) \in \EH^\pm(\kappa)$ decays to zero at $\pm \infty$. Here $\GG^{2n}_n(\CC)$ denotes the complex Grassmann manifold consisting of all $n$ -dimensional subspace of $\CC^{2n}$. On $\Lambda$ the subspaces $\EH^\pm(\kappa)$ depend analytically on $\kappa$ \cite{AGJ90,S02}:
\begin{defi}
The analytic bundles
\begin{align*}
\EH^\pm: \Lambda \rightarrow \GG^{2n}_n(\CC)
\end{align*}
are called Evans bundles for the shock profile $\phi$. An Evans function 
\begin{align*}
\mathcal{E}: \Lambda \rightarrow \CC
\end{align*}
is the Wronskian determinant of an analytic choice of bases for $\EH^-$ and $\EH^+$ on $\Lambda$.
\end{defi}
Later on we will restrict attention to the bundles $\EH^\pm$ on $\CH^+ \backslash \lbrace 0 \rbrace$.
\begin{defi}
\label{defspecstab}
The shock profile $\phi$ is called spectrally stable if the Evans bundles $\EH^\pm$ possess analytic extensions on $\CH^+$, and an Evans function $\mathcal{E}$ which is defined by taking the Wronskian determinant of an analytic choice of bases of the extensions of $\EH^-$ and $\EH^+$ on $\CH^+$ satisfies the Evans function condition (see \cite{MZ02}):
\begin{align*}
(\forall \kappa \in \CH^+: \; \mathcal{E}(\kappa) = 0 \; \Leftrightarrow \; \kappa = 0) \;\; \textnormal{and} \;\; (\mathcal{E}^\prime(0) \neq 0).
\end{align*}
\end{defi}
With the above definitions we can give a short statement of the central result of this paper:
\begin{thm}
\label{maintheorem}
For sufficiently small $\varepsilon >0$, the shock profile $\phi_\varepsilon$ is spectrally stable.
\end{thm}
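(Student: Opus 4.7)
The plan is to follow the Freist\"uhler--Szmolyan program of \cite{FS02,FS10} sketched in the introduction: augment the Laplace-mode equation \eqref{LapHsCL} along the profile $\phi_\varepsilon = \varepsilon u_\varepsilon(\chi_\varepsilon)$ with the slow equation \eqref{chi1}, producing an autonomous system
\begin{align*}
\tau^\prime = \varepsilon(1-\tau^2) h_\varepsilon(\tau), \qquad \xi^\prime = A_{\varepsilon,\kappa}(\tau)\xi
\end{align*}
on $J \times \CC^{2n}$ with parameters $(\varepsilon,\kappa)$, and then track the induced flow on the Grassmannian $\GG^{2n}_n(\CC)$. By Corollary \ref{cordispersion} the asymptotic matrices $A^\pm_{\varepsilon,\kappa}$ are hyperbolic with complementary $n$-dimensional stable and unstable subspaces throughout $\CH^+ \backslash \lbrace 0 \rbrace$, so the Evans bundles $\EH^\pm_\varepsilon$ are well-defined there; the remaining tasks are to extend them analytically through $\kappa = 0$ and to count their intersections in $\CH^+$.

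I would partition $\CH^+$ into three spectral regimes using parameters $0 < r \ll 1 \ll R$. On the \emph{medium regime} $r \leq |\kappa| \leq R$ the matrix $A_{0,\kappa}(\tau)$ has a uniform spectral gap over $\tau \in J$, so the slow manifold of Lemma \ref{slowlem} persists on the Grassmannian by normal hyperbolicity \cite{F79,J95}, and $\EH^\pm_\varepsilon(\kappa)$ arise as smooth $\OO(\varepsilon)$-perturbations of bundles assembled from the stable resp.~unstable subspaces of $A^\pm_{0,\kappa}$; non-intersection of $\EH^-$ and $\EH^+$ then follows from the dispersion estimate \eqref{Mdispersion} in Lemma \ref{lemdisperion}. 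On the \emph{large regime} $|\kappa| \geq R$ a rescaling that makes the $\kappa^2$-block of $A_{\varepsilon,\kappa}$ dominant decouples the bundles to leading order, precluding intersections for $R$ large. The \emph{small regime} $|\kappa| \leq r$ is the delicate one: I would apply the slow-Evans rescaling $\kappa = \varepsilon \tilde{\kappa}$, under which the coupled system along the slow manifold becomes regular in $(\tau,\tilde{\kappa},\varepsilon)$ and the bundles $\EH^\pm_\varepsilon$ extend analytically through $\tilde{\kappa} = 0$.

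Analysis of the extended bundles at $(\tilde{\kappa},\varepsilon)=(0,0)$ reduces, along the slow manifold, to a scalar problem governed by $h_0(\tau) = -\gamma_0 a$ with
\begin{align*}
a = \tfrac{1}{2}\bigl(\partial_k^2 f_k(0) - \mu_k(0) \partial_k^2 g_k(0)\bigr) \neq 0
\end{align*}
by the genuine-nonlinearity assumption A3 through \eqref{GNL} and \eqref{GNLcalc}. Since $\phi_\varepsilon^\prime$ is a bounded solution of the linearisation at $\kappa = 0$, the bundles $\EH^-$ and $\EH^+$ meet there; transversality of the scalar slow flow at $\tau = 0$, together with a Melnikov-type computation of the determinant along the profile, will yield $\mathcal{E}_\varepsilon(0) = 0$ and $\mathcal{E}_\varepsilon^\prime(0) \neq 0$ for small $\varepsilon > 0$, hence a simple zero. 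Additional zeros inside the small regime are excluded by continuity and the non-degeneracy of the reduced scalar problem, and the medium/large regime arguments exclude them elsewhere in $\CH^+$.

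The principal obstacle will be the small-$\kappa$ regime. At $\kappa = 0$ each asymptotic matrix $A^\pm_{\varepsilon,0}$ has an $n$-dimensional kernel, so the Evans bundles are not hyperbolically defined and must be constructed through a different mechanism; the crucial step is to choose local frames that separate slow from fast asymptotic directions so that $\EH^\pm_\varepsilon$ pass analytically through $\kappa = 0$ (in the spirit of \cite{GZ98,KS98}), and whose leading behaviour near $0$ is controlled by the slow equation \eqref{chi1}. Once this is set up, the simplicity of the zero at $0$ and its uniqueness in a full $\CH^+$-neighbourhood of $0$ rest decisively on A3 via $a \neq 0$, on A4 via Lemma \ref{lemdisperion}, and on the slow manifold of Lemma \ref{slowlem}.
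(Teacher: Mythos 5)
Your three-regime partition of $\CH^+$ and the overall Freist\"uhler--Szmolyan/Grassmannian strategy match the paper, and you correctly identify that the delicate issue is the low-frequency regime where $A^\pm_{\varepsilon,\kappa}$ lose hyperbolicity at $\kappa=0$, that $\phi_\varepsilon^\prime$ gives the zero of the Evans function, and that assumptions A3 and A4 must drive $a\neq 0$ and the dispersion estimate. However, the small-$\kappa$ treatment contains a genuine gap that would derail the proof.

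First, the scaling is wrong. You propose $\kappa=\varepsilon\tilde\kappa$, but the slow dynamics of the profile run at rate $\varepsilon$ in the stretched variable and the profile amplitude is itself $\OO(\varepsilon)$, so the Burgers eigenvalue problem that governs the low-frequency Evans function emerges only at the scale $\kappa=\varepsilon^2\zeta$ (with, in addition, the $q$-component rescaled by $\varepsilon$). This is also what the eigenvalue expansions demand: in Lemma \ref{caplemma} the slow-fast eigenvalues are $\OO(\varepsilon)$ and the super-slow ones are $\OO(\varepsilon^2\zeta)$; the nondegenerate Burgers limit in Lemma \ref{lemslowEvans} appears precisely when $\zeta=\kappa/\varepsilon^2$ is held fixed. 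Under your scaling $\kappa=\varepsilon\tilde\kappa$, the limit $(\tilde\kappa,\varepsilon)\to(0,0)$ collapses the super-slow directions degenerately and you never see the reduced Burgers Evans function, so the simple-zero argument is not supported.

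Second, and relatedly, your ``small regime'' $|\kappa|\leq r$ with $r$ fixed does \emph{not} rescale to a compact parameter set under any one-parameter rescaling: with $\kappa=\varepsilon^2\zeta$ it corresponds to $|\zeta|\leq r/\varepsilon^2\to\infty$, and with your $\kappa=\varepsilon\tilde\kappa$ it corresponds to $|\tilde\kappa|\leq r/\varepsilon\to\infty$. The paper therefore uses a different partition: an inner regime $|\kappa|\leq r_0\varepsilon^2$ (rescaled by $\zeta=\kappa/\varepsilon^2$, parameter range $|\zeta|\leq r_0$), an outer regime $r_0\varepsilon^2\leq|\kappa|\leq r_1$ handled by the desingularizing substitution $\beta^2=|\kappa|$, $\alpha=\varepsilon/\beta$, $\tilde\kappa=\kappa/|\kappa|$ (so $(\alpha,\beta,\tilde\kappa)$ ranges over a compact set), and an outmost regime $|\kappa|\geq r_1$ by block-diagonalization. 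Your ``continuity'' argument for excluding extra zeros in the small regime cannot work without a substitute for the outer-regime argument. Moreover, it is exactly in that outer regime that extremality ($k\in\{1,n\}$) enters: for an extreme shock there is a single slow unstable direction on the left end, carried along a normally hyperbolic attracting invariant manifold to the unique slow unstable direction on the right, which rules out unstable-to-stable connections; your proposal does not explain where extremality is used, and without it the count of slow unstable/stable directions is wrong and the transversality claim in that regime has no support.

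Finally, your Melnikov computation for $\mathcal{E}_\varepsilon^\prime(0)\neq 0$ is a legitimate alternative in principle, but the paper instead derives the $\varepsilon\to 0$ limit of the (rescaled) Evans function on the full inner disk and identifies it, up to a nonzero constant, with the Burgers Evans function $\tilde E_0$, invoking the known simplicity of that zero; either route requires the correct $\varepsilon^2$ scaling to make the limit nondegenerate.
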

Theorem \ref{maintheorem} is proved in section 4 by construction of holomorphic bundles $H^\pm_\varepsilon$ on $\CH^+$ which vary smoothly in $\varepsilon$ and are rescaled versions $H^\pm_\varepsilon(\zeta) = \EH^\pm_\varepsilon(\varepsilon^2 \zeta)$ of the Evans bundles $\EH^\pm_\varepsilon$ of $\phi_\varepsilon$.
\section{Resolvent kernel}
\label{secresker}
This section discusses the appearance of an Evans function in the construction of the resolvent kernel for the linearized system \eqref{linearHsCL} thereby giving a meaning to the Evans function condition in Definition \ref{defspecstab}. Following \cite{ZH98} this marks the starting point of an extended spectral theory for the linear operator associated with \eqref{linearHsCL} yielding an extension of its resolvent into the essential spectrum while characterizing the latter's regularity as well.
\subsection{Semi-group theory}
Suppose that we are given a tuple $(v^-,v^+,s) \in \mathcal{V} \times \mathcal{V} \times (-1,1)$ and $\phi \in C^\infty(\RR)$ such that
\begin{itemize}
\item[S1] $dg(v^\pm), dg(v^-) \pm df(v^-)$ and $dg(v^+) \pm df(v^+)$ are positive definite;
\item[S2] $df(v^\pm)$ is strictly hyperbolic w.r.t.~$dg(v^\pm)$;
\item[S3] $(v^-,v^+,s)$ is a $p$-shock for $g(v)_t +f(v)_x = 0$;
\item[S4] $\phi$ is a shock profile for the shock $(v^-,v^+,s)$.
\end{itemize}
A Lorentz boost of speed $s$ turns $\phi$ into a stationary solution of
\begin{align*}
g_s(v)_t + f_s(v)_x = \square v
\end{align*}
Linearizing at $\phi$ yields
\begin{align}
\label{linHsCL}
G v_t + (Fv)_x = \square v.
\end{align}
where we have set
\begin{align*}
G(x): = Dg_s(\phi(x)) \;\; \textnormal{and} \;\; F(x) := Df_s(\phi(x)).
\end{align*}
We will discuss the semi-group theory of \eqref{linHsCL} and its resolvent kernel. Associated to \eqref{linHsCL} we define the closed linear operator
\begin{align*}
D(L_x) & := H^2(\RR;\CC^n) \times H^1(\RR;\CC^n), \\
L_x V & := \begin{pmatrix} V_2 \\ \partial_x^2 V_1 - \partial_x (F V_1) - G V_2 \end{pmatrix}
\end{align*}
on the Hilbert space
\begin{align*}
H & = H^1(\RR;\CC^n) \times L^2(\RR;\CC^n),\\
\skp{V,W}{H} & := \skp{V_1,W_1}{L^2} + \skp{\partial_x V_1,\partial_x W_1}{L^2} + \skp{V_2,W_2}{L^2}
\end{align*}
$L_x$ is equal to $A + B$ where
\begin{align*}
D(A) := D(L_x), \; AV := \begin{pmatrix}
V_2 \\ \partial_x^2 V_1
\end{pmatrix}
\end{align*}
and
\begin{align*}
D(B) := H, \; BV := - \begin{pmatrix}
0 \\ \partial_x (F V_1) + G V_2.
\end{pmatrix}
\end{align*}
By Theorem 4.5 in Chapter 7 of Pazy's book \cite{P83} $A$ is the generator of a $C_0$-group $\lbrace e^{At} \rbrace_{t \in \RR}$ on $H$ satisfying
\begin{align*}
\norm{e^{At}}{} \leq e^{|t|}.
\end{align*}
The operator $B$ is a linear and bounded operator on $H$ and therefore $L_x$ is a perturbation of $A$ by the bounded linear operator $B$. By Theorem 1.1, Chapter 3 in \cite{P83} $L_x$ is the generator of a $C_0$-group $\lbrace e^{L_xt} \rbrace_{t \in \RR}$ of operators on $H$ satisfying
\begin{align*}
\norm{e^{L_x t}}{} \leq e^{(1+\norm{B}{})|t|}.
\end{align*}
Hence, we arrive at
\begin{lem}
(Corollary 7.5, Chapter 1, \cite{P83}) Let $e^{At}$ be the group of operators generated by $L_x$ in $H$. If $\gamma >0$ is large enough then for all $V \in D(L_x^2)$ it holds
\begin{align}
\label{resolventformula}
e^{At}V = \frac{1}{2\pi i} \int_{\gamma - i \infty}^{\gamma + i \infty} e^{\kappa t} (\kappa - L_x)^{-1} V d\kappa \in D(L_x)
\end{align}
and for every $0<\delta <1$, the integral converges in $H$ uniformly in $t$ for $|t| \in [\delta,1/\delta]$.
\end{lem}
\begin{rem}
\begin{itemize}
\item[(i)] Note $D(L_x^2) = \lbrace V \in H: \; V \in D(L_x), L_x V \in D(L_x) \rbrace$. Ellipticity of $\partial_x^2$ implies $D(L_x^2) = H^3(\RR;\CC^n) \times H^2(\RR;\CC^n)$.
\item[(ii)] The integral in \eqref{resolventformula} is a principal value integral in the sense that for all $U \in D(L_x^2)$ the line integral
\begin{align*}
\int_{\gamma -iK}^{\gamma + iK} e^{\kappa t} (\kappa - L_x)^{-1} V d\kappa
\end{align*}
converges for $K \rightarrow \infty$ in $H$ uniformly in $t$ for $|t| \in [\delta,1/\delta]$.
\end{itemize}
\end{rem}
The Green's function $\mathfrak{G}(t) = \fG(t,x,y)$ for $\partial_t - L_x$ is defined by its action on test functions $h,k \in C^\infty_0(\RR;\CC^n) \times C^\infty_0(\RR;\CC^n)$ via
\begin{align*}
\fG(t)(h,k) := (k,e^{L_x t} h)
\end{align*}
where for $h,k \in L^2(\RR;\CC^n) \times L^2(\RR;\CC^n) \cong L^2(\RR;\CC^{2n})$ we have used the pairing
\begin{align*}
(h,k) = \int_{\RR} h(x)^t k(x) dx \in \CC.
\end{align*}
On the resolvent set $\rho(L_x)$ of $L_x$ the resolvent kernel $\fG_\kappa = \fG_\kappa(x,y)$ is similarly defined by
\begin{align*}
\fG_{\kappa}(h,k) = (k,(L_x-\kappa)^{-1}h).
\end{align*}
Hence, for $\gamma > 0$ large enough one finds in the sense of distributions
\begin{align*}
\fG(t) = -\frac{1}{2\pi i} P.V. \int_{\gamma - i \infty}^{\gamma + i \infty} e^{\kappa t} \fG_\kappa d\kappa.
\end{align*}
For $\kappa \in \rho(L_x)$ the resolvent kernel is the uniquely defined distribution satisfying
\begin{align*}
(L_x-\kappa) \fG_\kappa = \delta_0(x-y) \;\; \textnormal{and}\;\; \fG_\kappa(h) \in H.
\end{align*}
Here $\delta_0$ is the Dirac distribution at $0$ and for $h,k \in C^\infty_0(\RR;\CC^{2n})$ the expression $\delta_0(x-y)$ denotes the distribution defined by inserting $h,k$ into the pairing $(\cdot,\cdot)$, i.e.
\begin{align*}
\delta_0(h,k) = (h,k).
\end{align*}
\begin{rem}
As customary we frequently switch between interpreting $\fG_\kappa$ as a distribution acting on test functions in $C^\infty_0(\RR^2;\CC^{2n})$ and as a map taking test functions $C^\infty_0(\RR;\CC^{2n})$ to distributions acting on $C^\infty_0(\RR;\CC^{2n})$. Since $\fG_\kappa$ is constructed as a measurable function there is no difficulty in changing between these view points.
\end{rem}
\subsection{Construction of the resolvent kernel}
For $\kappa \in \CC$ let
\begin{align*}
D(l_{x,\kappa}) := H^2(\RR;\CC^n) \subset L^2(\RR;\CC^n)
\end{align*}
and define the closed linear operator
\begin{align*}
l_{x,\kappa} v := \partial^2_x v - (Fv)_x - \kappa(\kappa + G) v, \;\; v \in D(l_{x,\kappa}),
\end{align*}
on $L^2(\RR;\CC^n)$. For $\kappa \in \CC$ with $ 0 \in \rho(l_{x,\kappa})$ let $\fg_\kappa$ be the uniquely defined distribution satisfying
\begin{align*}
l_{x,\kappa} \fg_\kappa = \delta_0(x-y) \;\; \textnormal{and} \;\; \fg_\kappa(h) \in L^2,
\end{align*}
where again for $h,k \in C^\infty_0(\RR;\CC^n)$
\begin{align*}
\delta_0(h,k) = \int_\RR h(x)^t k(x) dx .
\end{align*}
We find that indeed $\fg_\kappa$ is well-defined and for $h \in C^\infty_0(\RR;\CC^n)$ we have
\begin{align*}
\fg_\kappa(h) = l_{x,\kappa}^{-1} h.
\end{align*}
In particular, $\fg_\kappa(h) \in H^2$. We refer to $\fg_\kappa$ as the resolution kernel of $l_{x,\kappa}$.
\begin{lem}
Let $\kappa \in \CC$. Let $S_\kappa: H \rightarrow H$ denote the bounded multiplication operator
\begin{align*}
S_\kappa(V)(x) = \begin{pmatrix}
\ID_n & 0\\ \kappa + G(x) & \ID_n
\end{pmatrix} V(x),
\end{align*}
If $0 \in \rho(l_{x,\kappa})$ then the linear operator
\begin{align*}
\begin{pmatrix}
0 & l_{x,\kappa}^{-1}\\ \id_n & \kappa l_{x,\kappa}^{-1}
\end{pmatrix} \circ S_{\kappa}: H \rightarrow H
\end{align*}
is a bounded inverse of $L_x -\kappa$. If $\kappa \in \rho(L_x)$ then the linear operator 
\begin{align*}
(\id_n, 0) (L_x-\kappa)^{-1} \begin{pmatrix}
0\\ \id_n
\end{pmatrix}: L^2(\RR;\CC^n) \rightarrow L^2(\RR;\CC^n)
\end{align*}
is a bounded inverse of $l_{x,\kappa}$. In particular, it holds
\begin{align*}
0 \in \rho(l_{x,\kappa}) \; \Leftrightarrow \; \kappa \in \rho(L_x).
\end{align*}
\end{lem}
\begin{proof}
By elementary algebraic manipulations we have
\begin{align*}
\begin{pmatrix}
f_1\\f_2
\end{pmatrix} \in \ker (L_x-\kappa) \; \Leftrightarrow \; f_1 \in \ker l_{x,\kappa} \; \textnormal{and} \; \kappa f_1 = f_2.
\end{align*}
In particular,
\begin{align*}
 \ker (L_x -\kappa) = \lbrace 0 \rbrace \; \Leftrightarrow \; \ker l_{x,\kappa} = \lbrace 0 \rbrace.
\end{align*}
Since for $0 \in \rho(l_{x,\kappa})$ a bounded right inverse for $L_x - \kappa$ is given by
\begin{align*}
\begin{pmatrix}
0 & l_{x,\kappa}^{-1}\\ \id_n & \kappa l_{x,\kappa}^{-1}
\end{pmatrix} \circ S_{\kappa},
\end{align*}
and for $\kappa \in \rho(L_x)$ a bounded right inverse for $l_{x,\kappa}$ is given by
\begin{align*}
(\id_n, 0) (L_x-\kappa)^{-1} \begin{pmatrix}
0\\ \id_n
\end{pmatrix}, 
\end{align*}
the claim follows.
\end{proof}
By the previous lemma, or by direct calculation and use of definitions, we obtain for $\kappa \in \rho(L_x)$ the fundamental relation
\begin{align}
\label{Gcharg}
\fG_\kappa(x,y) = \begin{pmatrix}
(\kappa + G(y))\fg_\kappa(x,y) & \fg_\kappa(x,y)\\
\delta_0(x-y) + \kappa(\kappa + G(y)) \fg_\kappa(x,y) & \kappa \fg_\kappa(x,y)
\end{pmatrix}.
\end{align}
Hence a representation of $\fg_\kappa$ leads directly to a representation of $\fG_\kappa$. In the following we construct $\fg_\kappa$ in standard fashion from appropriately decaying solutions $(p,p^\prime)^t$ where $p$ solves $l_{x,\kappa} p = 0$: Consider the linear non-autonomous ordinary differential equation
\begin{align}
\label{ODEg}
\xi^\prime = \tA_\kappa(x) \xi \;\; (\xi = (p,p^\prime)^t \in \CC^{2n})
\end{align}
with coefficient matrix
\begin{align*}
\tA_{\kappa}(x) = \begin{pmatrix}
0 & \ID_n\\ \kappa(\kappa \ID_n + G(x)) + F^\prime(x) & F
\end{pmatrix}.
\end{align*}
The differential system \eqref{ODEg} is the standard first-order formulation of $l_{x,\kappa} p := 0$. Set $F^\pm := Df_s(v^\pm), G^\pm = Dg_s(v^\pm)$ and $\tA^\pm_\kappa := \tA_\kappa(\pm \infty)$, For any $m \in \NN_0$ there is a constant $\theta > 0$ such that
\begin{align*}
\lim\limits_{x \rightarrow \pm \infty} (|(\partial^m_x(F(x) - F^\pm)| + |\partial^m_x(G(x) - G^\pm)|)e^{\theta|x|} = 0,
\end{align*}
i.e.~the coefficient matrix $\tA_\kappa$ in \eqref{ODEg} is asymptotically constant with exponential rate. Because
\begin{align*}
\tA^\pm_\kappa \sim A_{F^\pm,G^\pm}(\kappa)= \begin{pmatrix}
F^\pm & \ID_n\\ \kappa(\kappa + G^\pm) & 0
\end{pmatrix}
\end{align*}
and $F^\pm, G^\pm$ satisfy the assumptions of Lemma \ref{lemdisperion} there exists $\nu >0$ such that $\Lambda := M_\nu$ is a domain of consistent splitting for $\lbrace (\tA^-_\kappa, \tA^+_\kappa)\rbrace_\kappa$. On $\Lambda$ the spectral projections $P^{\pm,S}_\kappa$ and $P^{\pm,U}_\kappa$ onto the $n$-dim.~stable subspace $S^\pm_\kappa$ and the $n$-dim.~unstable subspace $U^\pm_\kappa$ of $\tA^\pm_\kappa$ are well-defined and holomorphic in $\kappa$. The next lemma shows that these spectral projections have holomorphic extensions in a small neighbourhood of zero.
\begin{lem}
\label{lemprojectionextension}
Let $F, G$ satisfy the assumptions of Lemma \ref{lemdisperion}. Suppose in addition that $F$ is invertible, and let $\lambda_1, \ldots, \lambda_m$ be the eigenvalues of $F$ counted without their multiplicities. Denote by $\nu_1, \ldots, \nu_{\tilde{m}}$ the eigenvalues of $F^{-1}G (\sim G^{1/2} F^{-1} G^{1/2})$ with corresponding projections $p_1, \ldots, p_{\tilde{m}}$. Set $A(\kappa) := A_{F,G}(\kappa)$ and $\mathcal{A} := \lbrace A(\kappa) \rbrace_\kappa$, and let $\tilde{\Lambda}$ be a domain of constant splitting for $\mathcal{A}$ as found in Lemma \ref{lemdisperion}. On $\tilde{\Lambda}$ denote by $P^S=P^S(\kappa)$ resp.~$P^U=P^U(\kappa)$ the holomorphic spectral projection onto the stable resp.~unstable subspace of $A=A(\kappa)$. Then the slow eigenvalues of $A(\kappa)$ expand as
\begin{align*}
\nu_{jk}(\kappa) = -\nu_j \kappa + \beta_{jk} \kappa^2 + \mathcal{O}(\kappa^2).
\end{align*}
with total projection $P_j=P_j(\kappa)$ where $\beta_{jk}$ are the eigenvalues of
\begin{align*}
-(1 - (\nu_j)^2) p_j F^{-1}p_j|_{\mathcal{R}(p_j)} = \nu_j ((\nu_j)^2 -1) p_j G^{-1} p_j|_{\mathcal{R}(p_j)}
\end{align*}
with $\textnormal{sgn}\, \nu = \textnormal{sgn}\, \beta_{jk}$, and the fast eigenvalues of $A(\kappa)$ expand as
\begin{align*}
\lambda_j(\kappa) = \lambda_j + \OO(1).
\end{align*}
For $|\kappa| \ll 1$ denote by $P^<(\kappa)$ the total projection belonging to the $\lambda_j$-groups with $\lambda_j <0$ and by $P^>(\kappa)$ the total projection belonging to the $\lambda_j$-groups with $\lambda_j >0$. Then
\begin{align*}
\dim \left(P^>(\kappa) + \sum_{\nu_j < 0} P_j(\kappa)\right) = n = \dim \left(P^<(\kappa) + \sum_{\nu_j > 0} P_j(\kappa)\right),
\end{align*}
and if in addition $\kappa \in \Lambda$ then
\begin{align*}
P^>(\kappa) + \sum_{\nu_j < 0} P_j(\kappa) & = P^{U}(\kappa),\\
P^<(\kappa) + \sum_{\nu_j > 0} P_j(\kappa) & = P^{S}(\kappa),
\end{align*}
In particular $P^> + \sum_{\nu_j < 0} P_j$ is a holomorphic exentsion of $P^{U}$ into a small neighbourhood of $0$, and likewise for $P^< + \sum_{\nu_j > 0} P_j$ and $P^{S}$.
\end{lem}
\begin{proof}
Since $F$ is symmetric hyperbolic we may assume without loss of generality that $F = \diag(\lambda_1,\ldots, \lambda_m)$ where the eigenvalues $\lambda_j$ appear according to the ordering $\lambda_j < \lambda_{j^\prime}, j<j^\prime,$ counted with multiplicities.
A matrix consisting of eigenvectors of $A(0)$ is given by
\begin{align*}
\mathcal{W} := \begin{pmatrix}
\ID_n & \ID_n\\
0 & -F
\end{pmatrix}.
\end{align*}
$\mathcal{W} \in \RR^{2n \times 2n}$ is invertible with inverse
\begin{align*}
\mathcal{W}^{-1} = \begin{pmatrix}
\ID_n & F^{-1}\\
0 & -F^{-1}
\end{pmatrix}.
\end{align*}
Use $\mathcal{W}^{-1}$ to change basis and write $A$ as
\begin{align*}
T(\kappa) & := \mathcal{W}^{-1} A^\pm(\kappa) \mathcal{W}\\
& = \begin{pmatrix} F & 0\\ 0 & 0 \end{pmatrix} + \kappa \begin{pmatrix} C & C\\ -C & -C \end{pmatrix} + \kappa^2 \begin{pmatrix} F^{-1} & F^{-1}\\ -F^{-1} & -F^{-1} \end{pmatrix}\\
& := T^{(0)} + \kappa T^{(1)} + \kappa^2 T^{(2)}
\end{align*}
where $C := F^{-1} G$. We are interested in the $0$-group of $T$. Notice that the structure of $T$ is almost the same as in \cite{S20} and we refer to the results therein after making sure that the technique carries over (almost word by word). By hyperbolicity of $F$ we may separate the n-fold eigenvalue $0$ of $T(0)$ from the eigenvalues of $F$ and define for $|\kappa|$ small enough the total projection $P_0(\kappa)$ for the $0$-group of eigenvalues of $T(\kappa)$ by the usual Dunford formula. One has
\begin{align*}
P_0 := P_0(0) = \begin{pmatrix}
0 & 0 \\
0 & \ID_n
\end{pmatrix}.
\end{align*}
In order to apply the reduction process to expand $T(\kappa) P_0(\kappa)$ consider
\begin{align*}
T_0(\kappa) := \frac{1}{\kappa} T(\kappa) P_0(\kappa)
\end{align*}
which is holomorphic by semi-simplicity of the eigenvalue $0$. In particular it holds
\begin{align*}
T_0(0) = P_0 T^{(1)} P_0 = \begin{pmatrix} 0 & 0\\ 0 & -C \end{pmatrix}.
\end{align*}
Because $G$ is positive definite and
\begin{align*}
C \cong G^{1/2} F^{-1} G^{1/2}
\end{align*}
with the right hand side being symmetric and invertible we see that $C$ has only the pairwise distinct semi-simple eigenvalues $\lbrace \nu_1,\ldots,\nu_{\tilde{m}} \rbrace \subset \RR \backslash \lbrace 0 \rbrace$. In fact, the eigenvalues of $C$ are exactly the reciprocal eigenvalues of $F$ w.r.t.~$G$, and by $G\pm F> 0$ we find $|\nu_j|>1$. Again the total projections $P_j(\kappa)$ of the $-\nu_j$-group of $T_0$ are defined and holomorphic for $|\kappa|$ small enough and satisfy $P_0(\kappa) = \sum_{j} P_j(\kappa)$. If $p_j$ is the eigenprojection of the eigenvalue $\nu_j$ of $C$ one has
\begin{align*}
P_j := P_j(0) = \begin{pmatrix}0 & 0\\ 0 & p_j \end{pmatrix}.
\end{align*}
Carrying on as in the proof of Proposition 2.2.~and Corollary 2.3.~in \cite{S20} yields the claimed expansions of the eigenvalues of $A(\kappa)$ for small $\kappa$.\\
For $|\kappa| \ll 1$ and $\kappa \in \tilde{\Lambda} \cap \RR$ we deduce from the signs of the leading coefficients in the eigenvalue expansions that
\begin{align*}
P_>(\kappa) + \sum_{\nu_j < 0} P_j(\kappa) & = P^{U}(\kappa)\\
P_<(\kappa) + \sum_{\nu_j > 0} P_j(\kappa) & = P^{S}(\kappa).
\end{align*}
By analytic extension and the fact that the real parts of the eigenvalues of $A(\kappa)$ may not change sign in $\tilde{\Lambda}$ the above equalities of projections hold true for all small enough $\kappa \in \tilde{\Lambda}$. Since nearby projections have images of the same dimensions by Lemma 4.10.~\cite{K95} and $\dim P^U(\kappa) = n = \dim P^S(\kappa)$ we find for $|\kappa|$ small
\begin{align*}
\dim \left(P_>(\kappa) + \sum_{\nu_j < 0} P_j(\kappa)\right) = n = \dim \left(P_<(\kappa) + \sum_{\nu_j > 0} P_j(\kappa)\right).
\end{align*}
For fixed $j$ the coefficients $\beta_{jk}$ have as eigenvalues of
\begin{align*}
- (1 - \nu_j^2) p_jF^{-1}p_j & = - (1 - \nu_j^2) p_j CG^{-1} p_j\\
& = \nu_j (\nu_j^2-1) p_j G^{-1} p_j.
\end{align*}
restricted to the range $\mathcal{R}(p_j)$ of $p_j$ the same signs as the eigenvalue $\nu_j$ of $C=F^{-1}G$ because $|\nu_j| > 1$ and $G^{-1}$ is positive definite.
\end{proof}
\begin{rem}
In analogy to Remark 2.2.~in \cite{ZH98} we call 
\begin{align*}
B_{eff}^\pm := \sum_j p_j (G^\pm)^{-1} p_j
\end{align*}
an \grqq effective artificial viscosity matrix\grqq.
\end{rem}
Using Lemma \ref{lemprojectionextension} with $\tA_\kappa^\pm \sim A_{F^\pm,G^\pm}(\kappa)$ standard techniques in asymptotic ODE-theory (e.g.~\cite{GZ98,ZH98,MZ02}) imply:
\begin{lem}
Let $r>0$ so small that the conclusions of Lemma \ref{lemprojectionextension} hold. Let $\kappa_0 \in \Lambda \cup B_r(0)$. Then there exists locally in $\Lambda \cup B_r(0)$ analytic solutions $\phi^+_j(x;\kappa), \phi^-_{n+j}(x;\kappa), j = 1,\ldots,n,$ of \eqref{ODEg} such that
\begin{itemize}
\item[(i)] $\SP \lbrace \phi^-_{n+j}(x;\kappa) \rbrace_j \rightarrow U^-_\kappa$ for $x \rightarrow - \infty$, and $\SP \lbrace \phi^+_j(x;\kappa) \rbrace_j \rightarrow S^+_\kappa$ for $x \rightarrow + \infty$;
\item[(ii)] If $\kappa_0 \in \Lambda$ then there exists $\theta_0 >0$ such that for all $|\kappa - \kappa_0| \ll 1$ the solutions $\phi^+_j, \phi^-_{n+j}$ decay according to $|\phi^+_j(x;\kappa)| = \mathcal{O}(e^{-\theta_0 |x|}), x \geq 0,$ and $|\phi^-_{n+j}(x;\kappa)| = \mathcal{O}(e^{-\theta_0 |x|}), x \leq 0$.
\end{itemize}
\end{lem}
Following \cite{ZH98} $\fg_\kappa$ has to satisfy the jump condition
\begin{align*}
\left[ \begin{matrix} \fg_\kappa & \partial_x \fg_\kappa\\ \partial_y \fg_\kappa & \partial_x \partial_y \fg_\kappa \end{matrix} \right]_{x=y} = J(y):= \begin{pmatrix}
0 & -\ID_n\\ \ID_n & F(y)
\end{pmatrix}.
\end{align*}
Moreover, setting
\begin{align*}
\Phi^+ = (\phi^+_j)_{1\leq j \leq n} \;\; \textnormal{and} \;\; \Phi^- = (\phi^-_{n+j})_{1\leq j \leq n}
\end{align*}
one finds for $x>y$
\begin{align*}
\begin{pmatrix} \fg_\kappa & \partial_x \fg_\kappa\\ \partial_y \fg_\kappa & \partial_x \partial_y \fg_\kappa \end{pmatrix} = (\Phi^+(x;\kappa),0) (\Phi^+(y;\kappa),\Phi^-(y;\kappa))^{-1} J(y)
\end{align*}
and for $x<y$
\begin{align*}
\begin{pmatrix} \fg_\kappa & \partial_x \fg_\kappa\\ \partial_y \fg_\kappa & \partial_x \partial_y \fg_\kappa \end{pmatrix} = -(0,\Phi^-(x;\kappa)) (\Phi^+(y;\kappa),\Phi^-(y;\kappa))^{-1} J(y).
\end{align*}
Since the solution propagator of \eqref{ODEg} is holomorphic in $\kappa$ with values in $\textnormal{GL}_{2n}(\CC)$ the regularity of $\fg_\kappa$ in $\kappa$ and the position and order of its poles are determined by the local Evans function
\begin{align*}
\EE(\kappa) = \det (\Phi^+(0;\kappa),\Phi^-(0;\kappa))
\end{align*}
where the zeros of $\EE$ correspond to poles of $g_\kappa$. Summarizing we obtain the analogue of Proposition 4.7.~\cite{ZH98}:
\begin{prop}
For $\theta > 0$ small enough the distribution $\fg_\kappa(x,y)$ is meromorphic on $\CH^+-\theta$ with isolated poles of finite multiplicity corresponding in position and multiplicity to the zeros of a local Evans function of \eqref{ODEg}. On $\Lambda \backslash \sigma(L_x)$ $\fg_\kappa$ agrees with the resolution kernel of $l_{x,\kappa}$.
\end{prop}
Using the relation \eqref{Gcharg} we obtain as a corollary of the previous proposition:
\begin{cor}
For $\theta > 0$ small enough the distribution $\fG_\kappa(x,y)$ defined through $\fg_\kappa$ by \eqref{Gcharg} is meromorphic on $\CH^+-\theta$ with isolated poles of finite multiplicity corresponding in position and multiplicity to the zeros of a local Evans function of \eqref{ODEg}. On $\Lambda \backslash \sigma(L_x)$ $\fG_\kappa$ agrees with the resolvent kernel of $L_x$.
\end{cor}
\section{Grassmannian flows}
\label{Sec3}

An important tool in the following investigations are flows on Grassmann manif\-olds that are induced on the latter by the flow of a linear ordinary differential equation (see \cite{AGJ90}). In this section the definitions and results needed in the later analysis are presented including a derivation of the well-known algebraic Riccati equation that governs the evolution of vector spaces under a linear differential equation. More background material may be found in many geometry textbooks, such as \cite{GH78} and \cite{L13}. Furthermore, flows on Grassmannian manifolds are a popular topic in control theory, see e.g.~\cite{DM90}, where the algebraic Riccati equation is treated, too. In \cite{FGP94} the reader may find a nice overview of the topic and some characterizations of smoothness of functions taking values in Grassmannian manifolds.

Let $d,m \in \NN$ with $m \leq d$. Set
\begin{align*}
M_{d,m} := \lbrace B \in \CC^{d \times m}: \; \textnormal{rank}\, B = m \rbrace
\end{align*}
and for $B \in M_{d,m}$ denote by $\SP B$ the $m$-dimensional subspace of $\CC^d$ spanned by the columns of $B$. For any matrix $B \in \CC^{d \times m}$ we will write
\begin{align*}
B = \begin{pmatrix}
B_1 \\ B_2
\end{pmatrix}
\end{align*} 
for appropriate matrices $B_1 \in \CC^{m \times m}$ and $B_2 \in \CC^{(d-m) \times m}$, and $B=(b_j)_j$ will mean that $b_j \in \CC^d$ is the $j$-th column of $B$. $M_{d,m}$ is an open subset of $\CC^{d \times m}$ and is endowed with the standard analytic structure as an open submanifold of $\CC^{d \times m}$. The set
\begin{align*}
\GG^d_m(\CC) := \lbrace X| \; X \;\textnormal{is an} \; m \; \textnormal{dim. subspace of} \; \CC^d \rbrace.
\end{align*}
can be given an analytic structure yielding a compact complex analytic manifold of dimension $(d-m)m$ which is called a Grassmann or Grassmannian manifold. For an ordered basis $\BB=(b_1,\ldots,b_d)$ of $\CC^d$ canonical local coordinates of $\GG^d_m(\CC)$ centered at $X=\SP \lbrace b_1,\ldots,b_m \rbrace$ are defined by
\begin{align*}
\phi_\BB: \CC^{(d-m)\times m} & \rightarrow \GG^d_m(\CC)\\
T & \mapsto \SP M_\BB \begin{pmatrix} \ID_m\\T \end{pmatrix}
\end{align*}
where
\begin{align*}
M_\BB = (b_1 \ldots b_d) \in \textnormal{GL}_d(\CC).
\end{align*}
Note $\phi_\BB(0) = X$. The analytic manifolds $M_{d,m}$ and $\GG^d_m(\CC)$ are connected by the fact that
\begin{align*}
\pi_{d,m}: M_{d,m} \rightarrow \GG^d_m(\CC), \; B \mapsto \SP B
\end{align*}
is an analytic quotient map.\\
Let $A \in \CC^{d \times d}$ and consider the linear ordinary differential equation
\begin{align}
\label{ODE}
\xi^\prime = A\xi \;\;(\xi \in \CC^d)
\end{align}
with flow $((t,\xi) \mapsto e^{At} \xi)$. For $X \in \GG^d_m(\CC)$ with basis $\BB_X =(b_1,\ldots,b_m)$ the Grassmannian flow on $\GG^d_m(\CC)$ associated to \eqref{ODE} starting in $X$ is defined by
\begin{align*}
X(t) = \pi_{d,m} ((e^{At} b_j)_{j\leq m}) = \SP (e^{At} b_j)_{j\leq m}.
\end{align*}
The definition of $(t \mapsto X(t))$ is independent of the choice of basis of $X$ and gives rise to a flow $((t,X) \mapsto \Phi(t,X))$ on $\GG^d_m(\CC)$ by letting $(t \mapsto \Phi(t,X))$ be the Grassmannian flow on $\GG^d_m(\CC)$ associated to \eqref{ODE} starting in $X$. Augment $\BB_X$ to a basis $\BB = (b_1,\ldots,b_d)$ of $\CC^d$. Then the map
\begin{align*}
\tilde{\Phi}_\BB: \RR \times \CC^{(d-m) \times m} &\rightarrow M_{d,m}\\
(t,T) & \mapsto e^{At} M_\BB \begin{pmatrix} \ID_m\\T \end{pmatrix}
\end{align*}
is smooth in $t$ and analytic in $T$. For all $(t,T) \in \RR \times \CC^{(d-m) \times m}$ we have by definitions
\begin{align}
\label{repGrassflow1}
\Phi(t,\phi_\BB(T)) = \pi_{d,m} \tilde{\Phi}_\BB(t,T)
\end{align}
which shows that $\Phi$ is smooth in $t$ and analytic in $X$ w.r.t.~to the analytic structure of $\GG^d_m(\CC)$. We give a local representation of the vector field induced by $\Phi$ on $\GG^d_m(\CC)$ by calculating the derivative
\begin{align*}
\Gamma^m(A) X := \left.\frac{\partial}{\partial t}\right\vert_{t=0} \Phi(t,X) \in T\GG^d_m(\CC).
\end{align*}
in $\phi_\BB$ coordinates. First we are interested in the differential of $\psi:= \phi^{-1} \circ \pi|_U$ where $\phi = \phi_\BB, \pi= \pi_{d,m}$ and $U:= \pi^{-1}(\phi(\CC^{(d-m) \times m}))$ is open. For $B \in U$ there is a $T \in C^{d-m \times m}$ with $\SP B = \phi(T)$. Hence, we may find $E \in \CC^{m \times m}$ such that
\begin{align*}
M_\BB \begin{pmatrix} \ID_m \\ T \end{pmatrix} = B E \;\; \Leftrightarrow \;\; \begin{pmatrix} \ID_m \\ T \end{pmatrix} = \tilde{B} E
\end{align*}
with $\tilde{B} = M_\BB^{-1}B$. We deduce that $\tilde{B}_1$ is invertible since $\tilde{B}_1 E = \ID_m$, and that $T = \tilde{B}_2 \tilde{B}_1^{-1}$ holds. Vice versa, if for $B \in M_{d,m}$ the matrix $\tilde{B} = M_\BB^{-1} B$ has invertible submatrix $\tilde{B}_1$ then 
\begin{align*}
\SP \tilde{B} = \SP \begin{pmatrix} \ID_m \\ \tilde{B}_2 \tilde{B}_1^{-1}\end{pmatrix}
\end{align*}
because multiplying $\tilde{B}$ by $\tilde{B}_1^{-1}$  from the right results only in a change of basis of $\SP B$. This implies $\SP B = \phi(\tilde{B}_2 \tilde{B}_1^{-1})$ and we have found
\begin{align*}
U = \lbrace B \in M_{d,m}: \; \tilde{B}_1 \in \textnormal{GL}_m(\CC) \; \textnormal{where} \; \tilde{B} = M_\BB^{-1} B \rbrace.
\end{align*}
From these considerations we find that the map $\psi$ is given by $\psi= \tilde{\psi} \circ M_\BB^{-1}$ where
\begin{align*}
\tilde{\psi}: \tilde{U}:=M_\BB^{-1} U & \rightarrow \CC^{(d-m) \times m},\\
\tilde{B} & \mapsto \tilde{B}_2 \tilde{B}_1^{-1}.
\end{align*}
The differential of $\tilde{\psi}$ at $\tilde{B} \in \tilde{U}$ is calculated to be
\begin{align}
\label{differentialPhi}
D\tilde{\psi}(\tilde{B}): \CC^{d \times m} \rightarrow \CC^{(d-m) \times m}, E \mapsto E_2 \tilde{B}_1^{-1} - \tilde{B}_2 \tilde{B}_1^{-1} E_1 \tilde{B}_1^{-1}.
\end{align}
If $\tilde{B}_1 = \ID_m$ then
\begin{align*}
D\tilde{\psi}(\tilde{B})E = E_2 - \tilde{B_2} E_1 = (-\tilde{B}_2,\ID_{n-m}) E.
\end{align*}
We find
\begin{align*}
\left.\frac{\partial}{\partial t}\right\vert_{t=0} (\phi^{-1} \circ\Phi)(t,\phi(T)) & = \left.\frac{\partial}{\partial t}\right\vert_{t=0} (\psi \circ \tilde{\Phi}_\BB)(t,T)\\
& = D\psi(\tilde{\Phi}_\BB(0,T))\left.\frac{\partial}{\partial t}\right\vert_{t=0} \tilde{\Phi}_\BB(t,T)\\
& = D\tilde{\psi}\left(M_\BB^{-1} M_\BB \begin{pmatrix} \ID_m\\T\end{pmatrix} \right) M_\BB^{-1} A M_\BB \begin{pmatrix} \ID_m\\T\end{pmatrix}\\
& = (-T, \ID_{d-m}) A_\BB \begin{pmatrix} \ID_m\\T\end{pmatrix}
\end{align*}
where $A_\BB \in \CC^{d \times d}$ is the representative of $A$ with respect to $\BB$, i.e.~$A_\BB = M_\BB^{-1} A M_\BB$. Hence, in canonical local coordinates $\phi_\BB$ the Grassmannian flow consists of solutions to the Riccati-type equation
\begin{align}
\label{repGrassflow2}
T^\prime = (-T, \ID_{d-m}) A_\BB \begin{pmatrix} \ID_m\\T\end{pmatrix} \;\; \textnormal{on} \;\; \CC^{(d-m)\times m}.
\end{align}
The global expression for the vector field associated to $\Phi$ reads
\begin{align}
\label{Grassvecfield}
X^\prime = \Gamma^m(A) X \;\; \textnormal{on} \;\; \GG^d_m(\CC).
\end{align}
The fact that the Grassmannian flow induced by a linear ordinary differential equations is locally of Riccati-type is of course well-known. This is summarized in the following lemma which is also a more general form of Lemma 3 (i) in \cite{FS02}.
\begin{lem}
\label{lemlocalGrass}
Let $\BB$ be an ordered basis of $\CC^d$. Then in canonical local coordinates $\phi_\BB$ the Grassmannian flow on $\GG^d_m(\CC)$ associated to \eqref{ODE} consists of solutions to the Riccati-type equation
\begin{align}
\label{RicattiGrass}
T^\prime = (-T, \ID_{d-m}) A_\BB \begin{pmatrix} \ID_m\\T\end{pmatrix} \;\; \textnormal{on} \;\; \CC^{(d-m)\times m}
\end{align}
where $A_\BB \in \CC^{d \times d}$ is the representative of $A$ w.r.t.~the basis $\BB$. If $A_\BB$ is diagonal with
\begin{align*}
A_\BB = \diag(\mu_1,\ldots,\mu_d)
\end{align*}
then \eqref{RicattiGrass} reads for $1\leq j \leq d-m, 1 \leq k \leq m$
\begin{align*}
(T)_{jk}^\prime = (\mu_{m+j}-\mu_k) (T)_{jk}.
\end{align*}
\end{lem}
On $\CC^{(d-m) \times m}$ let $\Fskp{\cdot,\cdot}{}$ denote the Frobenius inner product given by
\begin{align*}
\Fskp{E,F}{} = \sum_{j=1}^{d-m} \sum_{k=1}^m (E)_{jk} \overline{(F)}_{jk} \; \; (E,F \in \CC^{(d-m) \times m})
\end{align*}
and let $\norm{\cdot}{}$ be the norm on $\CC^{(d-m) \times m}$ induced by $\Fskp{\cdot,\cdot}{}$. We give some useful consequences of Lemma \ref{lemlocalGrass}.
\begin{cor}
\label{corhyperattractor}
Let $\BB = (b_1,\ldots,b_d)$ be an ordered basis of $\CC^d$ such that $X:= \SP \lbrace b_1,\ldots, b_m \rbrace$ and $Y:= \SP \lbrace b_{m+1},\ldots,b_d \rbrace$ are complementary invariant subspaces of $A$. Then
\begin{align*}
A_\BB = \begin{pmatrix}
A^X & 0\\0 & A^Y
\end{pmatrix}
\end{align*}
for appropriate $A^X \in \CC^{m \times m}, A^Y \in \CC^{(d-m) \times (d-m)}$ and the local representation of \eqref{Grassvecfield} at $X$ in $\phi_\BB$-coordinates is given by the linear ordinary differential equation
\begin{align}
\label{repGrassflowlin}
T^\prime = -TA^X + A^Y T.
\end{align}
If there exist $c^X, c^Y \in \RR$ with $c^X+c^Y > 0$ and for all $\eta^X \in \CC^m, \eta^Y \in \CC^{d-m}$
\begin{align}
\label{realpartsGrass}
\left\lbrace\begin{aligned}
\RE \skp{\eta^X, A^X\eta^X}{} & \geq c^X |\eta^X|^2\\
\RE \skp{\eta^Y, A^Y \eta^Y}{} & \leq -c^Y |\eta^Y|^2
\end{aligned} \right.
\end{align}
then $X$ is a hyperbolic attractor for the Grassmannian flow $\Phi$. For any $r>0$ the set $\phi_\BB(B_r(0))$ is a positively invariant neighbourhood of $X$ in $\GG^d_m(\CC)$.
\end{cor}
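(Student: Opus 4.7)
My plan is to prove the corollary by establishing exponential decay of the Frobenius norm $L(T):=\|T\|^2$ along trajectories in the chart $\phi_\BB$ centered at $X$, and then reading off both positive invariance and hyperbolicity from that single estimate.

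First I would use the $A$-invariance of $X$ and $Y$ together with the specific choice of $\BB$ to note that $A_\BB$ has the claimed block-diagonal form; substituting into the Riccati equation of Lemma~\ref{lemlocalGrass} then collapses the off-block contributions and yields
\[
T' = (-T, \ID_{d-m}) \begin{pmatrix} A^X & 0 \\ 0 & A^Y \end{pmatrix}\begin{pmatrix} \ID_m \\ T \end{pmatrix} = -TA^X + A^Y T,
\]
which is \eqref{repGrassflowlin}, with $T = 0$ corresponding to $X = \phi_\BB(0)$. Because this equation is already linear, the linearization at $X$ is this operator itself, so hyperbolicity will reduce to a spectral statement about $T \mapsto -TA^X + A^Y T$.

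The heart of the argument is then the estimate
\[
\tfrac{d}{dt} L(T) = -2\RE \Fskp{TA^X, T}{} + 2\RE \Fskp{A^Y T, T}{} \leq -2(c^X+c^Y) L(T).
\]
I would handle the $A^Y T$ term by the column decomposition $T=(t_1,\dots,t_m)$, which rewrites $\RE \Fskp{A^Y T, T}{}$ as $\sum_j \RE \skp{A^Y t_j, t_j}{}$ and is bounded above by $-c^Y L(T)$ by the second half of \eqref{realpartsGrass}. The $TA^X$ term instead requires a row decomposition: letting $\eta_k\in\CC^m$ denote the transpose of the $k$-th row of $T$, a short calculation gives $\Fskp{TA^X, T}{}=\sum_k \skp{(A^X)^T \eta_k, \eta_k}{}$, and the first half of \eqref{realpartsGrass} (together with the fact that $\RE\skp{M\eta,\eta}{}=\RE\skp{\eta,M\eta}{}$) then bounds the sum below by $c^X L(T)$. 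Gronwall's lemma closes the estimate with $L(T(t))\leq e^{-2(c^X+c^Y)t}L(T(0))$.

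All conclusions now follow cleanly. Since $c^X+c^Y>0$, every ball $B_r(0)\subset \CC^{(d-m)\times m}$ is positively invariant under the Riccati flow, so $\phi_\BB(B_r(0))$ is a positively invariant neighbourhood of $X$ in $\GG^d_m(\CC)$; the same decay rate forces each eigenvalue $\nu_l-\mu_k$ of the linearization (with $\mu_k, \nu_l$ ranging over the spectra of $A^X, A^Y$) to satisfy $\RE(\nu_l-\mu_k)\leq -(c^X+c^Y)<0$, so $X$ is a hyperbolic attractor. The only point requiring care is matching the row-wise analysis of $TA^X$ with the appearance of $(A^X)^T$ rather than $A^X$ in \eqref{realpartsGrass} — this mismatch is precisely why the hypothesis is phrased with a transpose, and once noticed everything else is routine linear algebra.
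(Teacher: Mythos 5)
Your proposal is correct and matches the paper's own argument essentially step for step: block-diagonality of $A_\BB$ and the Riccati equation give the linear flow $T'=-TA^X+A^YT$; the Frobenius-norm calculation with the row decomposition for $TA^X$ and the column decomposition for $A^YT$ yields the estimate $\frac{d}{dt}\|T\|^2 \leq -2(c^X+c^Y)\|T\|^2$; and positive invariance plus hyperbolicity both follow from it. The paper packages the decay as a strict Lyapunov function and deduces hyperbolicity from negative definiteness of $\RE\Fskp{E,DF(0)E}{}$ rather than passing via Gronwall and the explicit eigenvalues $\nu_l-\mu_k$, but these are interchangeable ways of expressing the same conclusion.
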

\begin{proof}
The assertion on $A_\BB$ and and the local form of $\Gamma^m$ follow immediately by linear algebra and \eqref{repGrassflow2}. Denote by
\begin{align*}
F(T) := -TA^X + A^YT
\end{align*}
the local representation of the vector field $\Gamma^m$ as given in \eqref{repGrassflowlin}. $X$ is a fixed point of $\Phi$ since $\phi_\BB(0) = X$ and $F(0) = 0$. We show that $DF(0)$ is negative definite and that for any $r>0$ the function
\begin{align*}
L: B_r(0) & \rightarrow \RR\\
T & \mapsto \norm{T}{}^2
\end{align*}
is a strict Lyapunov function for \eqref{repGrassflow2}. The differential of $F$ at $0$ is
\begin{align*}
DF(0)E = -E A^X + A^YE \;\; (E \in \CC^{(d-m)\times m}).
\end{align*}
For $E \in \CC^{(d-m)\times m}$ denote by $\tilde{E}_i$ the $i$-th row of $E$ and by $E_j$ the $j$-th column of $E$. Then
\begin{align*}
2 \RE \Fskp{E,DF(0)E}{} = & -\Fskp{E,EA^X}{} + \Fskp{E,A^YE}{}\\
& - \Fskp{EA^X,E}{} + \Fskp{A^YE,E}{}\\
= & - \sum_{i=1}^{d-m} \skp{\tilde{E}_i,(A^X)^t \tilde{E}_i}{} + \sum_{j=1}^m \skp{E_j, A^Y E_j}{}\\
& - \sum_{i=1}^{d-m} \skp{(A^X)^t \tilde{E}_i,\tilde{E}_i}{} + \sum_{j=1}^m \skp{A^Y E_j,E_j}{}\\
=& 2 \left( - \sum_{i=1}^{d-m} \RE \skp{\tilde{E}_i,(A^X)^t \tilde{E}_i}{} + \sum_{j=1}^m \RE \skp{E_j, A^Y E_j}{}\right)\\
\leq & 2 \left(- c^X \sum_{i=1}^{d-m} |\tilde{E}_i|^2 - c^Y \sum_{j=1}^m |E_j|^2 \right)\\
=& -2 (c^X +c^Y) \norm{E}{}^2,
\end{align*}
i.e.~$DF(0)$ is negative definite and $X$ a hyperbolic fixed point of $\Phi$ since $c^X+c^Y > 0$. Note that for the last inequality we have used
\begin{align*}
\skp{\eta, A^t \eta}{} = \skp{\overline{\eta}, A \overline{\eta}}{}.
\end{align*}
for $\eta \in \RR^m$. By exactly the same calculation we find
\begin{align*}
L(T)^\prime & = 2 \RE \Fskp{T,T^\prime}{} = 2 \RE \Fskp{T,F(T)}{} \leq - 2(c^X+c^Y) \norm{T}{}^2
\end{align*}
showing that $L$ is a strict Lyapunov function and $B_r(0)$ is positively invariant for \eqref{repGrassflowlin}.
\end{proof}
\begin{cor}
\label{corGrasssaddle}
Let $m_1, m_2 \in \NN$ such that $m+m_1+m_2 = d$. Let $\BB = (b_1,\ldots,b_d)$ be an ordered basis of $\CC^d$ such that $X:= \SP \lbrace b_1,\ldots, b_m \rbrace, Y:= \SP \lbrace b_{m+1},\ldots,b_{m+m_1} \rbrace$ and $Z:= \SP \lbrace b_{m+m_1+1},\ldots,b_{d} \rbrace$ are complementary invariant subspaces of $A$. Then
\begin{align*}
A_\BB = \begin{pmatrix}
A^X & 0 & 0\\0 & A^Y & 0\\ 0 & 0 & A^Z
\end{pmatrix}
\end{align*}
for appropriate $A^X \in \CC^{m \times m}, A^Y \in \CC^{m_1 \times m_1}, A^Z \in \CC^{m_2 \times m_2}$ and the local representation of $\Gamma^m(A)$ at $X$ in $\phi_\BB$-coordinates is given by the linear ordinary differential equation
\begin{align}
\label{repGrassflowlin2}
T^\prime = -T A^X + \begin{pmatrix}A^Y & 0 \\ 0 & A^Z \end{pmatrix} T.
\end{align}
If $\ker A = X$ and for some $c > 0$ and all $\eta^Y \in \CC^{m_1}, \eta^Z \in \CC^{m_2}$
\begin{align}
\label{realpartsGrass2}
\left\lbrace\begin{aligned}
\RE \skp{\eta^Y, A^Y \eta^Y}{} & \leq -c |\eta^Y|^2\\
\RE \skp{\eta^Z, A^Z \eta^Z}{} & \geq c |\eta^Y|^2
\end{aligned} \right.
\end{align}
then $A^X = 0$ and $X$ is a hyperbolic saddle of $\Phi$.
\end{cor}
\begin{proof}
The first part follows from the same arguments as used in the beginning of the proof of Lemma \ref{corhyperattractor}. Suppose $\ker A = X$ and \eqref{realpartsGrass2} hold. Set
\begin{align*}
V^Y := \left\lbrace E \in \CC^{(d-m) \times m}\Big\vert\; \exists E_1 \in \CC^{m_1 \times m}: E = \begin{pmatrix} E_1 \\0 \end{pmatrix}\right\rbrace,\\
V^Z := \left\lbrace E \in \CC^{(d-m) \times m}\Big\vert\; \exists E_2 \in \CC^{m_2 \times m}: E = \begin{pmatrix} 0 \\ E_2 \end{pmatrix}\right\rbrace.
\end{align*}
Let $F(T) = \begin{pmatrix}A^Y & 0 \\ 0 & A^Z \end{pmatrix} T$. Then
\begin{align*}
DF(0)E = \begin{pmatrix}A^Y & 0 \\ 0 & A^Z \end{pmatrix}E
\end{align*}
and $V^Y$ resp.~$V^Z$ are complementary invariant subspace of $DF(0)$ of dimension $m_1 m$ resp.~$m_2 m$. With a similar calculation as in the proof of Lemma \ref{corhyperattractor} one finds
\begin{align*}
\RE \skp{E,DF(0)E}{} & \leq -c \norm{E}{}^2 \;\; \textnormal{for all}\;\; E \in V^Y,\\
\RE \skp{E,DF(0)E}{} & \geq c \norm{E}{}^2 \;\; \textnormal{for all} \;\; E \in V^Z.
\end{align*}
Hence, $X$ is a hyperbolic saddle for $\Phi$ with an $m_1m$-dimensional stable manifold and an $m_2 m$-dimensional unstable manifold for $\Phi$ at $X$.
\end{proof}
This section is closed by a variant of Lemma 6 in \cite{FS02} replacing diagonalization by a suitable block-diagonalization which will be important for the treatment of \eqref{LapHsCL} in the outer and outmost spectral regime (see section \ref{Sec4}).
\begin{lem}
\label{lemma6a}
For every $d \in \NN$ there exists a constant $c = c(d)$ such that the following holds: Let $c^>,c^<: J \rightarrow \RR$ be functions with $c^>(\tau) + c^<(\tau) > 0, \tau \in J,$ and let $A,R: J \rightarrow \GL_{2d}(\CC)$ be smooth functions such that
\begin{align}
\label{blockdiag}
R(\tau)^{-1} A(\tau) R(\tau) = \begin{pmatrix}
A^>(\tau) & 0\\ 0 & A^<(\tau)
\end{pmatrix} \in \CC^{2d} \; (\tau \in J)
\end{align}
with $A^\gtrless(\tau) \in \CC^{d \times d}$ and
\begin{align}
\label{definite}
\left\lbrace\begin{aligned}
\textnormal{Re}\, \skp{\eta, A^>(\tau) \eta}{} & \geq c^>(\tau) |\eta|^2,\\
\textnormal{Re}\, \skp{\eta, A^<(\tau) \eta}{} & \leq -c^<(\tau) |\eta|^2,
\end{aligned} \right.
\end{align}
for all $\eta \in \CC^d$ and $\tau \in J$. With $\chi: \RR \rightarrow (-1,1)$ being the solution of
\begin{align}
\label{chiequatio}
\chi^\prime = (1-\chi^2) h(\chi), \; \chi(0) = 0,
\end{align}
for some smooth function $h:J \rightarrow (0,\infty)$ consider the equation
\begin{align}
\label{Grassmann}
X^\prime = \Gamma^d A(\chi) (X) \; \textnormal{on} \; \GG^{2d}_d(\CC)
\end{align}
which is the associated Grassmannian version for $d$-dimensional vector spaces of the non-autonomous linear system
\begin{align}
\label{linearsystem}
\xi^\prime = A(\chi) \xi
\end{align}
for $\xi \in \CC^{2d}$. Let $U(\tau),S(\tau)$ denote the invariant subspace of $A(\tau), \tau \in J,$ spanned by the first $d$ resp.~last $d$ columns of $R(\tau)$ and define $X^{\pm}: \RR \rightarrow \GG^{2d}_d(\CC)$ as the two solutions of \eqref{Grassmann} with
\begin{align}
\label{stablemfdstart}
X^-(-\infty) = U(-1), \; X^+(+\infty) = S(+1).
\end{align}
If
\begin{align}
\left\vert R(0)^{-1} \frac{d}{d\tau} R(\tau) \right\vert \leq c \; (\tau \in J)
\end{align}
then
\begin{align}
X^-(+\infty) = U(+1), \; X^+(-\infty) = S(-1).
\end{align}
\end{lem}
\begin{rem}
If in the above lemma the constants $c^>,c^<$ are larger than zero then $U(\tau)$ resp.~$S(\tau)$ is the unstable resp.~stable subspace of $A(\tau)$.
\end{rem}
\begin{proof}
We treat $X^-$. Fix a $\tau \in J$. Let $\phi_\tau: \CC^{d \times d} \rightarrow \mathcal{S}_\tau$ be the canonical coordinates on a maximal portion $\mathcal{S}_\tau \subset \GG^{2d}_d$ adapted to the columns of $R(\tau)$ as an ordered basis of $\CC^{2d}$ and centered at $\SP (R(\tau)_j)_{1 \leq j \leq d}$. Then equation \eqref{Grassmann} reads in local coordinates
\begin{align}
\label{Tequation1}
T^\prime = (-T, \ID_n) \tilde{A}(\tau) \begin{pmatrix}
\ID_n \\ T
\end{pmatrix} =: F_\tau(T)
\end{align}
where $T \in \CC^{d \times d}$ and $\tilde{A}(\tau)$ is the representative of $A(\tau)$ with respect to the basis given by the columns of $R(\tau)$, i.e.
\begin{align*}
\tilde{A}(\tau) = R(\tau)^{-1} A(\tau) R(\tau) = \begin{pmatrix}
A^>(\tau) & 0\\ 0 & A^<(\tau)
\end{pmatrix}.
\end{align*}
In particular,
\begin{align*}
F_\tau(T) = - T A^>(\tau) + A^<(\tau) T.
\end{align*}
The point $0$ corresponds in $\varphi_\tau$-coordinates to the subspace $U(\tau)$. Using Corollary \ref{corhyperattractor} we find that the differential $D_T F_\tau(T)$ of $F_\tau(T)$ with respect to $T$ at $T = 0$ is negative definite. In particular, if $\tau = -1$ then the coupled system \eqref{chiequatio}-\eqref{Grassmann} has exactly one repelling direction at $(-1,U(-1))$ coming from the $\chi$-equation.  To see this note that $\lbrace \tau = -1 \rbrace$ is an invariant set for the system \eqref{chiequatio}-\eqref{Grassmann}, and in local $\id_1 \times \phi_{-1}$-coordinates the vector field for the coupled $(\tau,X)$-equations has its differential given by
\begin{align*}
\begin{pmatrix}
2h(-1) & 0\\
\ast & D_T F_{-1}(0)
\end{pmatrix}
\end{align*}
at $(-1,0)$. Using the stable manifold theorem we find a unique orbit giving rise to the solution $X^-$ of \eqref{Grassmann} satisfying \eqref{stablemfdstart}.\\
Furthermore, Corollary \ref{corhyperattractor} tells us that for any $\sigma> 0$ the ball $B_\sigma(0)$ of radius $\sigma$ centered at $0$ is a positively invariant set for the local representation
\begin{align*}
T^\prime = (-T, \ID_n) \tilde{A}(\tau) \begin{pmatrix}
\ID_n \\ T
\end{pmatrix}
\end{align*}
of the Grassmannian flow induced by $A(\tau)$ on $\GG^{2d}_d(\CC)$ in $\phi_\tau$-coordinates. Therefore the construction of an appropriate Lyapunov function for the coupled $(\tau,X)$-system carries over from the proof of Lemma 6 in \cite{FS02}, and we find
\begin{align*}
X^-(+\infty) = U(+1).
\end{align*}
\end{proof}
\section{Spectral stability: Proof of main theorem}
\label{Sec4}
Let us turn to the proof of our main result Theorem \ref{maintheorem}. Set
\begin{align*}
A_{\varepsilon,\kappa}(\tau) := A_{s(\varepsilon),\kappa}(\varepsilon u_\varepsilon(\tau)).
\end{align*}
Instead of the non-autonomous ordinary differential equation \eqref{LapHsCL} it will be convenient to consider the autonomous system
\begin{align}
\label{LapHsCLauto}
\left\lbrace \begin{aligned}
\tau^\prime & = \varepsilon (1-\tau^2)h_\varepsilon(\tau)\\
\xi^\prime & = A_{\varepsilon,\kappa}(\tau) \xi
\end{aligned} \right.
\end{align}
on $J \times \CC^{2n}$ with the obvious correspondence between solutions of \eqref{LapHsCLauto} and \eqref{LapHsCL}. Making use of the linear structure of the $\xi$-equations in \eqref{LapHsCLauto} Grassmannian versions
\begin{align}
X^\prime = \Gamma^m A_{\varepsilon,\kappa}(\tau)(X) \;\; \textnormal{on} \;\; \GG^{2n}_m(\CC)
\end{align}
of the $\xi$-equations in \eqref{LapHsCLauto}, still coupled to the dominant $\tau$-equation, are investigated for various $m \in \lbrace 1, \ldots, 2n \rbrace$. Here the Grassmannian flow is again defined by taking the span of the $\xi$-part of appropriately many solutions to the system \eqref{LapHsCLauto} with linearly independent $\xi$-initial values. With this point of view the Evans bundles $\EH_\varepsilon^\pm(\kappa)$ are constructed in a way that, besides analyticity in $\kappa$, implies the smooth dependence on $\varepsilon$, too. In fact different rescalings of $\kappa$ and $\varepsilon$ will lead to certain rescaled bundles revealing different properties of the Evans bundles in different spectral regimes (c.f.~\cite{FS02,FS10}):\\
\textbf{Inner regime:} For all $r_0 > 0$ and small $\varepsilon_0=\varepsilon_0(r_0)>0$ consider all $\varepsilon \in (0,\varepsilon_0]$ and $\kappa \in \CH^+$ with
\begin{align*}
0 \leq |\kappa| \leq r_0 \varepsilon^2;
\end{align*}
\\
\textbf{Outer regime:} For some $r_0,r_1 > 0$ and $\varepsilon_0>0$ consider all $\varepsilon \in (0,\varepsilon_0]$ and $\kappa \in \CH^+$ with
\begin{align*}
r_0 \varepsilon^2 \leq |\kappa| \leq r_1;
\end{align*}
\textbf{Outmost regime:} For all $r_1 > 0$ and small $\varepsilon_0=\varepsilon_0(r_1)>0$ consider all $\varepsilon \in (0,\varepsilon_0]$ and $\kappa \in \CH^+$ with
\begin{align*}
r_1 \leq |\kappa|.
\end{align*}
The different regimes are treated in the following subsections. Before we enter into the considerations regarding the spectral regimes let us give a useful block-diagonalization of
\begin{align*}
(\varepsilon, u) \mapsto Df_{s(\varepsilon)}(\varepsilon u)
\end{align*}
where $u \in B_r(0)$ for some $r>0$ and $\varepsilon \leq \varepsilon_0= \varepsilon_0(r)$.
\begin{lem}
\label{lemdiagf}
Let $r>0$. Then there exists $\varepsilon_0 = \varepsilon_0(r)$ and smooth functions
\begin{align*}
W: [0,\varepsilon_0] \times B_r(0) & \rightarrow \textnormal{SO}_n(\RR),\\
M^<: [0,\varepsilon_0] \times B_r(0) & \rightarrow \RR^{(k-1) \times (k-1)},\\
M^0: [0,\varepsilon_0] \times B_r(0) & \rightarrow \RR,\\
M^>: [0,\varepsilon_0] \times B_r(0) & \rightarrow \RR^{(n-k) \times (n-k)},
\end{align*}
such that for all $u \in B_r(0)$
\begin{align*}
W(0,u) & = \ID_n\\
M^<(0,u) & =  \diag(\lambda_1,\ldots,\lambda_{k-1})\\
M^0(0,u) & = 0\\
\partial_\varepsilon M^0(0,u) &= e_k^t \partial_{v_k}D_v f(0) u\\
M^>(0,u) & = \diag(\lambda_{k+1},\ldots,\lambda_n),
\end{align*}
and all $(\varepsilon,u) = [0,\varepsilon_0] \times B_r(0)$
\begin{align*}
W(\varepsilon,u)^T Df_{s(\varepsilon)}(\varepsilon u) W(\varepsilon,u) = \begin{pmatrix}
M^<(\varepsilon,u) & 0 & 0\\
0 & M^0(\varepsilon,u) & 0\\
0 & 0 & M^>(\varepsilon,u)
\end{pmatrix}.
\end{align*}
\end{lem}
\begin{proof}
Let $r>0$ and consider all $\varepsilon \geq 0$ such that $B_{\varepsilon r}(0) \subset \mathcal{V}$. Define the smooth function $F(\varepsilon,u) = Df_{s(\varepsilon)}(\varepsilon u)$. Then $F(0,u) = \diag(\lambda_1,\ldots,\lambda_n)$ for all $u$ with
\begin{align*}
\lambda_j < 0 = \lambda_k = 0 < \lambda_{j^\prime}\;\; \textnormal{for} \;\; j<k<j^\prime.
\end{align*}
Hence for $\varepsilon$ small enough there are total spectral projections
\begin{align*}
P^<(\varepsilon,u) &  = \frac{1}{2\pi i} \int_{\Gamma^<} (z-F(\varepsilon,u))^{-1} dz,\\
P^0(\varepsilon,u) &  = \frac{1}{2\pi i} \int_{\Gamma^0} (z-F(\varepsilon,u))^{-1} dz,\\
P^>(\varepsilon,u) &  = \frac{1}{2\pi i} \int_{\Gamma^>} (z-F(\varepsilon,u))^{-1} dz,
\end{align*}
onto the algebraic eigenspaces of the groups of eigenvalues of $F(\varepsilon,u)$ with real parts smaller, close-to and larger than zero. $\Gamma^\gtrless, \Gamma^0$ are suitably chosen closed curves enclosing the respective group of eigenvalues. The above projections are smooth in $(\varepsilon,u)$, form a disjoint partition of unity in the sense that
\begin{align*}
P^< + P^0 + P^> &= \ID_n,\\
P^<P^0 = P^0 P^< & = 0,\\
P^>P^0 = P^0 P^> & = 0,\\
P^<P^> = P^> P^< & = 0,
\end{align*}
and for all $u$ it holds
\begin{align*}
P^<(0,u) & = \Pi^< :=\begin{pmatrix} \ID_{k-1} & 0 \\ 0 & 0 \end{pmatrix} \in \CC^{n \times n},\\
P^0(0,u) & = E_{kk} \in \CC^{n \times n},\\
P^>(0,u) & = \Pi^> := \begin{pmatrix} 0 & 0 \\ 0 & \ID_{n-k} \end{pmatrix} \in \CC^{n \times n}
\end{align*}
where for $j =1,\ldots,n$ we have set
\begin{align*}
E_{jj} := e_j e_j^t.
\end{align*}
Because $F(\varepsilon,u)$ is real symmetric we find
\begin{align*}
P^{\gtrless}, P^0 \in \textnormal{Sym}_n(\RR).
\end{align*}
Following chapter 2.4.~in \cite{K95} on the construction of transformation functions adapted to several (disjoint) projections set
\begin{align*}
Q = (\partial_\varepsilon P^<)P^< + (\partial_\varepsilon P^0)P^0 + (\partial_\varepsilon P^>)P^>.
\end{align*}
Then $Q$ is again smooth in $(\varepsilon,u)$ and real skew-symmetric. For $u \in B_R(0)$ let $W=W(\varepsilon,u)$ solve
\begin{align}
\label{transequaouter}
\partial_\varepsilon W(\varepsilon,u) = Q(\varepsilon,u) W(\varepsilon,u), \; W(0,u) = \ID_n.
\end{align}
As a solution of a linear ordinary differential equation depending smoothly on parameters $W$ is smooth in $(\varepsilon,u)$ and $\det W > 0$. Since $Q$ is real skew-symmetric we find
\begin{align*}
W^{-1} = W^t,
\end{align*}
i.e.~$W \in \textnormal{SO}_n(\RR)$. Furthermore, for all $(\varepsilon,u)$ the relations
\begin{align*}
W(\varepsilon,u) P^{\gtrless}(0,u) W(\varepsilon,u)^t & = P^{\gtrless}(\varepsilon,u)\\
W(\varepsilon,u) P^0(0,u) W(\varepsilon,u)^t & = P^0(\varepsilon,u)
\end{align*}
hold. We find
\begin{align*}
M(\varepsilon,u) := & W(\varepsilon,u)^t F(\varepsilon,u) W(\varepsilon,u)\\
= & \Pi^< M(\varepsilon,u) \Pi^< + E_{kk} M(\varepsilon,u) E_{kk} + \Pi^> M(\varepsilon,u) \Pi^> \\
= & \begin{pmatrix}
M^<(\varepsilon,u) & 0 & 0\\
0 & M^0(\varepsilon,u) & 0\\
0 & 0 & M^>(\varepsilon,u)
\end{pmatrix}.
\end{align*}
The functions $M^{\gtrless},M^0$ are smooth in $(\varepsilon,u)$ with
\begin{align*}
M^<(0,u) = \diag(\lambda_1,\ldots,\lambda_{k-1}), M^0(0,u) = 0, M^>(0,u) = \diag(\lambda_{k+1},\ldots,\lambda_n)
\end{align*}
for all $u$. A direct calculation shows
\begin{align*}
\partial_\varepsilon M^0(0,u) = (\partial_\varepsilon F(0,u))_{kk} = e_k^t \partial_{v_k}D_v f(0) u.
\end{align*}
\end{proof}
Set
\begin{align*}
F_{\varepsilon}(\tau) := Df_{s(\varepsilon)}(\varepsilon u_\varepsilon(\tau)), G_{\varepsilon}(\tau) := Dg_{s(\varepsilon)}(\varepsilon u_\varepsilon(\tau)).
\end{align*}
and $F_0 := F_{0}(0), G_0 := G_{0}(0)$, and for appropriate $\nu_\varepsilon > 0$ let
\begin{align*}
\Lambda_\varepsilon := M_{\nu_\varepsilon}
\end{align*}
be a domain of consistent splitting for $\lbrace (A_{\varepsilon,\kappa}(-1),A_{\varepsilon,\kappa}(+1)) \rbrace_\kappa$ as guaranteed to exist by Corollary \ref{cordispersion}.
\subsection{Inner regime}
Let $r_0 > 0$ and set
\begin{align*}
D_{r_0} := \lbrace \kappa \in \CH^+: \; |\kappa| \leq r_0 \rbrace.
\end{align*}
The scalings
\begin{align*}
\varepsilon^2 \zeta = \kappa \;\; \textnormal{and} \;\; \varepsilon\tilde{q} = q
\end{align*}
yield
\begin{align*}
A^I_{\varepsilon,\zeta}(\tau) = \begin{pmatrix}
F_\varepsilon(\tau) & \varepsilon \ID_d\\ \varepsilon^3 \zeta^2 \ID_d + \varepsilon \zeta G_\varepsilon(\tau) & 0
\end{pmatrix}
\end{align*}
with
\begin{align*}
A^I_{\varepsilon,\zeta}(\tau) \sim A_{s(\varepsilon),\varepsilon^2 \zeta}(\tau) \;\; \textnormal{if} \;\; \varepsilon >0.
\end{align*}
Hence, for $\varepsilon > 0$ we can consider 
\begin{align}
\label{LapHsCLautoin}
\left\lbrace \begin{aligned}
\tau^\prime & = \varepsilon (1-\tau^2)h_\varepsilon(\tau)\\
\xi^\prime & = A^I_{\varepsilon,\zeta}(\tau) \xi
\end{aligned} \right.
\end{align}
on $J \times \CC^{2n}$ instead of \eqref{LapHsCLauto}. First we are interested in the splitting of eigenvalues and eigenspaces of $A^I_{\varepsilon,\zeta}(\tau)$ at the end points $\tau = \pm 1$ for $\zeta \in D_{r_0}$ and small $\varepsilon \geq 0$. Fix some notation by setting
\begin{align*}
F^{\pm}_{\varepsilon} & := F_\varepsilon(\pm 1),\\
G^{\pm}_{\varepsilon} &:= G_\varepsilon(\pm 1),\\
\underline{G}_0 & := (G_0)_{kk}^{-1}G_0,\\
A^{I}_0 & := A^I_{0,0}(0),\\
A^{I,\pm}_{\varepsilon,\zeta} & := A^I_{\varepsilon,\zeta}(\pm 1).
\end{align*}
\begin{lem}
\label{caplemma}
\begin{itemize}
\item[(i)] For any $r_0 > 0$ there exists $\varepsilon_0 = \varepsilon_0(r_0)$ such that for $\varepsilon \in [0,\varepsilon_0]$ there are holomorphic mappings
\begin{align*}
\begin{aligned}
S^{\pm,f}_\varepsilon: D_{r_0} & \rightarrow \GG^{2n}_{k-1}(\CC), && S^{\pm,sf}_\varepsilon: D_{r_0} & \rightarrow \GG^{2n}_{1}(\CC), && S^{\pm,ss}_\varepsilon: D_{r_0} & \rightarrow \GG^{2n}_{n-k}(\CC),\\
U^{\pm,f}_\varepsilon: D_{r_0} & \rightarrow \GG^{2n}_{n-k}(\CC), && U^{\pm,sf}_\varepsilon: D_{r_0} & \rightarrow \GG^{2n}_{1}(\CC), && U^{\pm,ss}_\varepsilon: D_{r_0} & \rightarrow \GG^{2n}_{k-1}(\CC),
\end{aligned}
\end{align*}
and
\begin{align*}
N^\pm_\varepsilon: D_{r_0} \rightarrow \GG^{2n}_{n+1}(\CC)
\end{align*}
depending smoothly on $\varepsilon$ such that for all $\zeta \in D_{r_0}$ their images define a splitting of $\CC^{2n}$ consisting of invariant subspaces of $A^{I,\pm}_{\varepsilon,\zeta}$:\\
For all $\varepsilon \in [0,\varepsilon_0]$ and $\zeta \in D_{r_0}$ it holds
\begin{align*}
\CC^{2n} = S^{\pm,f}_{\varepsilon,\zeta} \oplus N^\pm_{\varepsilon,\zeta} \oplus U^{\pm,f}_{\varepsilon,\zeta}
\end{align*}
and \begin{align*}
N^\pm_{\varepsilon,\zeta} = S^{\pm,sf}_{\varepsilon,\zeta} \oplus S^{\pm,ss}_{\varepsilon,\zeta} \oplus U^{\pm,sf}_{\varepsilon,\zeta}
\oplus U^{\pm,ss}_{\varepsilon,\zeta}.
\end{align*}
Furthermore, for all $\zeta \in D_{r_0}$
\begin{align*}
S^{\pm,f}_{0,\zeta} & = S_0^f := \CC^{k-1} \times \lbrace 0 \rbrace^{n-k+1} \times \lbrace 0 \rbrace^n\\
 U^{\pm,f}_{0,\zeta} & = U_0^f := \lbrace 0 \rbrace^{k} \times  \CC^{n-k} \times \lbrace 0 \rbrace^{n},
\end{align*}
and
\begin{align*}
N^\pm_{0,\zeta} = N_0 := \lbrace 0 \rbrace^{k-1} \times \CC \times \lbrace 0 \rbrace^{n-k} \times \CC^n.
\end{align*}
The super-slow invariant subspaces are constant on $\lbrace \varepsilon = 0 \rbrace$, i.e.
\begin{align*}
S^{\pm,ss}_{0,\zeta} & = S_0^{ss} := S^{+,ss}_{0,0}\\
U^{\pm,ss}_{0,\zeta} & = U_0^{ss} := U^{+,ss}_{0,0}
\end{align*}
\item[(ii)] The eigenvalues
\begin{align*}
\lbrace \mu^{\pm,S,f}_{j;\varepsilon,\zeta}\rbrace_{j<k}, \lbrace\mu^{\pm,U,f}_{j;\varepsilon,\zeta}\rbrace_{j>k}, \mu^{\pm,S,sf}_{\varepsilon,\zeta}, \mu^{\pm,U,sf}_{\varepsilon,\zeta}, \lbrace \mu^{\pm,S,ss}_{j;\varepsilon,\zeta}\rbrace_{j>k}, \lbrace\mu^{\pm,U,ss}_{j;\varepsilon,\zeta}\rbrace_{j<k}
\end{align*}
of $A^{I,\pm}_{\varepsilon,\zeta}$ expand as
\begin{align*}
\left\lbrace\begin{aligned}
&\lambda_j + \OO(\varepsilon) \; & \textnormal{for} \; j < k\\
&\lambda_j + \OO(\varepsilon) \; & \textnormal{for} \; j > k\\
&\varepsilon(\tilde{\mu}^{S,sf}_\zeta + \OO(\varepsilon))&\\
&- \varepsilon \zeta(G_0)_{kk}(\tilde{\mu}^{S,sf-1}_\zeta+ \OO(\varepsilon))&\\
&\varepsilon \zeta (G_0)_{kk}(\tilde{\mu}^{S,sf-1}_\zeta + \OO(\varepsilon))&\\
&- \varepsilon(\tilde{\mu}^{S,sf}_\zeta + \OO(\varepsilon))&\\
&-\varepsilon^2 \zeta(\mu_j^{-1}/2 +\OO(\varepsilon)) \; & \textnormal{for} \; j > k\\
& -\varepsilon^2 \zeta(\mu_j^{-1}/2 + \OO(\varepsilon)) \; & \textnormal{for} \; j < k
\end{aligned}\right\rbrace \textnormal{on} \left\lbrace\begin{aligned}
S^{\pm,f}_{\varepsilon,\zeta}\\
U^{\pm,f}_{\varepsilon,\zeta}\\
S^{+,sf}_{\varepsilon,\zeta}\\
U^{+,sf}_{\varepsilon,\zeta}\\
S^{-,sf}_{\varepsilon,\zeta}\\
U^{-,sf}_{\varepsilon,\zeta}\\
S^{\pm,ss}_{\varepsilon,\zeta}\\
U^{\pm,ss}_{\varepsilon,\zeta}
\end{aligned}\right\rbrace
\end{align*}
where
\begin{align*}
\tilde{\mu}^{U/S,sf}_\zeta :=  a \pm \sqrt{a^2 + \zeta(G_0)_{kk}}
\end{align*}
satisfy
\begin{align*}
- \zeta (G_0)_{kk} = \tilde{\mu}^{S,sf}_\zeta \tilde{\mu}^{U,sf}_\zeta, \textnormal{Re}\,\tilde{\mu}^{U,sf}_\zeta > 0 \;\; \textnormal{and} \;\; \textnormal{Re}\,\tilde{\mu}^{S,sf}_\zeta \leq a <0 \;\;\textnormal{for} \;\; \zeta \in \CH^+ \backslash \lbrace 0 \rbrace.
\end{align*}
\item[(iii)] There is a smooth choice of bases
\begin{align*}
\lbrace R^{\pm,S,f}_{j;\varepsilon,\zeta} \rbrace_{j< k}, \lbrace R^{\pm,U,f}_{j;\varepsilon,\zeta} \rbrace_{j< k}, R^{\pm,S,sf}_{\varepsilon,\zeta}, R^{\pm,U,sf}_{\varepsilon,\zeta}, \lbrace R^{\pm,S,ss}_{j;\varepsilon,\zeta} \rbrace_{j> k}, \lbrace R^{\pm,U,ss}_{j;\varepsilon,\zeta} \rbrace_{j< k}
\end{align*}
for the above invariant subspaces of $A^{I,\pm}_{\varepsilon,\zeta}$ such that for $\varepsilon = 0$ the fast subspaces are spanned by
\begin{align*}
R^{\pm,S,f}_{j;0,\zeta} & = \tilde{e}_j = \begin{pmatrix} e_j\\ 0 \end{pmatrix} \;\; \textnormal{for} \;\; j< k,\\
R^{\pm,U,f}_{j;0,\zeta} & = \tilde{e}_j = \begin{pmatrix} e_j\\ 0 \end{pmatrix} \;\; \textnormal{for} \;\; j> k,
\end{align*}
the slow-fast subspaces by
\begin{align*}
R^{+,S,sf}_{0,\zeta} & = \begin{pmatrix}e_k \\ -\tilde{\mu}^{U,sf}_\zeta \underline{G}_0 e_k \end{pmatrix},\\
R^{+,U,sf}_{0,\zeta} & = \begin{pmatrix}e_k \\ -\tilde{\mu}^{S,sf}_\zeta \underline{G}_0 e_k \end{pmatrix},\\
R^{-,S,sf}_{0,\zeta} & = \begin{pmatrix}e_k \\ \tilde{\mu}^{S,sf} \underline{G}_0 e_k \end{pmatrix}, \\
R^{-,U,sf}_{0,\zeta} & = \begin{pmatrix}e_k \\ \tilde{\mu}^{U,sf} \underline{G}_0 e_k \end{pmatrix},
\end{align*}
and the super-slow subspaces by
\begin{align*}
R^{\pm,S,ss}_{j;0,\zeta} & = \begin{pmatrix} 0 \\ \underline{G}_0r_j\end{pmatrix} \;\;\textnormal{for} \;\; j > k,\\
R^{\pm,U,ss}_{j;0,\zeta} & = \begin{pmatrix} 0 \\ \underline{G}_0r_j\end{pmatrix} \;\; \textnormal{for} \;\; j < k.
\end{align*}
\end{itemize}
\end{lem}
\begin{proof}
Let $r_0 > 0$. We only consider the right end $\tau = +1$, i.e.~only the $+$-superscript, since the arguments in the case $\tau = -1$ are the same modulo obvious modifications. Let $\varepsilon_0>0$ be so small that Lemma \ref{lemdiagf} holds, and in its notation we may assume
\begin{align*}
F_{\varepsilon} = \begin{pmatrix}
M^<_\varepsilon(+1) & 0 & 0\\ 0 & M^0_\varepsilon(+1) & 0 \\ 0 & 0 & M^>_\varepsilon(+1)
\end{pmatrix}
\end{align*}
for
\begin{align*}
M^<_\varepsilon(\tau) & = M^<(\varepsilon,\varepsilon u_\varepsilon(\tau)) \\
M^0_\varepsilon(\tau) & = M^0(\varepsilon,\varepsilon u_\varepsilon(\tau))\\
M^>_\varepsilon(\tau) & =M^>(\varepsilon,\varepsilon u_\varepsilon(\tau)).
\end{align*}
For $\varepsilon = 0$ we find that
\begin{align*}
A^I_{0,\zeta} = A^I_0 = \begin{pmatrix}
\diag(\lambda_1,\ldots,\lambda_n) & 0\\ 0 & 0
\end{pmatrix}
\end{align*}
is independent of $\zeta$ and of diagonal form. Hence for small $\varepsilon$ the total projections
\begin{align*}
P^{S,f}_{\varepsilon,\zeta}, P^s_{\varepsilon,\zeta}\; \textnormal{and}\; P^{U,f}_{\varepsilon,\zeta}
\end{align*}
onto the eigenvalues with real part (strongly) smaller, close-to and (strongly) larger than zero are well-defined and smooth in their parameters. For $\varepsilon = 0$ they have constant values
\begin{align}
\label{PSf0}
P^{S,f}_0:= P^{S,f}_{0,\zeta} & = \begin{pmatrix}
\Pi^< & 0\\0 & 0
\end{pmatrix}\\
\nonumber P_0^s:=P^s_{0,\zeta} & = \begin{pmatrix}
E_{kk} & 0\\0 & \ID_n
\end{pmatrix}\\
\label{PUf0}
P^{U,f}_0:=P^{U,f}_{0,\zeta}& = \begin{pmatrix}
\Pi^>& 0\\0 & 0
\end{pmatrix}.
\end{align}
We apply Kato's reduction process \cite{K95} to the $0$-group. A Taylor expansion shows that
\begin{align*}
A^{I,s}_{\varepsilon,\zeta} := \frac{1}{\varepsilon} A^I_{\varepsilon,\zeta} P^s_{\varepsilon,\zeta}
\end{align*}
defines a smooth function with
\begin{align*}
A^{I,s}_{0,\zeta} = P^s_0 \partial_\varepsilon A^I_{0,\zeta} P^s_0.
\end{align*}
From Lemma \ref{lemdiagf} and $u_\varepsilon(\tau) = \varepsilon \tau e_k + \mathcal{O}(\varepsilon^2)$ one finds $\partial_\varepsilon M^0_{0} = 2 a $, and hence
\begin{align*}
A^{I,s}_{0,\zeta} = \begin{pmatrix}
2  a E_{kk} & E_{kk} \\ \zeta G_0 E_{kk} & 0
\end{pmatrix}.
\end{align*}
For $\zeta \in D_{r_0}$ the matrix $A^{I,s}_{0,\zeta}$ has two slow-fast eigenvalues given by
\begin{align}
\label{slowfastEV}
\tilde{\mu}^{U/S,sf}_\zeta =  a \pm \sqrt{a^2 + \zeta(G_0)_{kk}}
\end{align}
and an $2(n-1)$-fold zero eigenvalue. It holds for all $\zeta \in D_{r_0}$
\begin{align*}
\RE \tilde{\mu}^{S,sf}_\zeta < a < 0 \leq \RE \tilde{\mu}^{S,sf}_\zeta.
\end{align*}
Hence for $\varepsilon$ small and all $\zeta \in D_{r_0}$ there is a spectral gap of size $a/2$ separating a stable simple eigenvalue $\tilde{\mu}^{S,sf}_{\varepsilon,\zeta}$ from all other eigenvalues of $A^{I,s}_{\varepsilon,\zeta}$, and we find a stable slow-fast eigenvalue $\mu_{\varepsilon,\zeta}^{S,sf}$ of $A^I_{\varepsilon,\zeta}$ with 
\begin{align*}
\mu_{\varepsilon,\zeta}^{S,sf} = \varepsilon (\tilde{\mu}_{\zeta}^{S,sf} + \mathcal{O}(\varepsilon))
\end{align*}
and one dimensional eigenprojection $P^{S,sf}_{\varepsilon,\zeta}$ which is smooth in $\varepsilon$ and holomorphic in $\zeta$.\\
To get a hold of the slow-fast eigenvalue associated to $\tilde{\mu}^{U,sf}_{\zeta}$ let us consider
\begin{align*}
A^{I,U,sf}_{\varepsilon,\zeta} = A^{I,s}_{\varepsilon,\zeta}(\ID_{2n}-P^{S,sf}_{\varepsilon,\zeta}).
\end{align*}
Using the Dunford formula we find at $\zeta = 0$
\begin{align*}
P^s_{\varepsilon,0} = \begin{pmatrix}
E_{kk} & -\varepsilon \tilde{F}^{-1}_{\varepsilon}\\
0 & \ID_n
\end{pmatrix}
\end{align*}
where
\begin{align*}
\tilde{F}^{-1}_{\varepsilon} = \begin{pmatrix}
M^<_\varepsilon(+1)^{-1} & 0 & 0 \\ 0 & 0 & 0\\ 0 & 0 & M^>_\varepsilon(+1)^{-1}
\end{pmatrix},
\end{align*}
which leads to
\begin{align*}
A^{I,s}_{\varepsilon,0} = \varepsilon^{-1} A^I_{\varepsilon,0} P^s_{\varepsilon,0} = \begin{pmatrix}
\frac{M_\varepsilon^0(+1)}{\varepsilon} E_{kk} & E_{kk}\\
0 & 0
\end{pmatrix}
\end{align*}
and
\begin{align*}
P^{S,sf}_{\varepsilon,0} = \begin{pmatrix}
E_{kk} & \frac{\varepsilon}{M_\varepsilon^0(+1)} E_{kk} \\ 0 & 0
\end{pmatrix}.
\end{align*}
We obtain for all small $\varepsilon$
\begin{align}
\label{slowfastunstable1}
A^{I,U,sf}_{\varepsilon,0} = A^{I,s}_{\varepsilon,0}(\ID_{2n}-P^{S,sf}_{\varepsilon,0}) = 0.
\end{align}
Set $\Delta^{sf}_\zeta = \tilde{\mu}^{S,sf}_\zeta - \tilde{\mu}^{U,sf}_\zeta$ and
\begin{align*}
\tilde{R} := \begin{pmatrix}
(r_j)_{j<k},0,(r_j)_{j>k}
\end{pmatrix} \in \RR^{n \times n}
\end{align*}
with $r_j$ from Lemma \ref{assupmlem1}. By the just quoted lemma and Lemma \ref{assumplem2} the columns of
\begin{align*}
\tilde{R} + (G_0)_{kk}^{-1}E_{kk}
\end{align*}
are eigenvectors of $F_0$ w.r.t.~$G_0$, and
\begin{align*}
(\tilde{R}^t+E_{kk})G_0(\tilde{R} + (G_0)_{kk}^{-1}E_{kk}) = \ID_n.
\end{align*}
Diagonalize $A^{I,s}_{0,\zeta}$ via
\begin{align}
\label{diagR+}
R_\zeta & := \begin{pmatrix}
\ID_n & E_{kk}\\
- \frac{\tilde{\mu}^{S,sf}_\zeta}{(G_0)_{kk}} G_0 E_{kk} & G_0\left(\tilde{R} -\frac{\tilde{\mu}^{U,sf}_\zeta}{(G_0)_{kk}} E_{kk}\right)
\end{pmatrix},\\
\nonumber R_\zeta^{-1} & := \begin{pmatrix}
\ID_n - \frac{\tilde{\mu}^{S,sf}_\zeta}{\Delta^{sf}_\zeta} E_{kk} & - \frac{1}{\Delta^{sf}_\zeta} E_{kk}\\ \frac{\tilde{\mu}^{S,sf}_\zeta}{\Delta^{sf}_\zeta} E_{kk} & \tilde{R}^t + \frac{1}{\Delta^{sf}_\zeta} E_{kk}
\end{pmatrix},
\end{align}
to find
\begin{align*}
R_\zeta^{-1}A^{I,s}_{0,\zeta}R_\zeta = \begin{pmatrix}
\tilde{\mu}^{U,sf}_\zeta E_{kk} & 0 \\ 0 & \tilde{\mu}^{S,sf}_\zeta E_{kk}
\end{pmatrix}.
\end{align*}
Hence at $\varepsilon = 0$ the slow-fast stable projections is given by
\begin{align}
\label{PSsf}
P^{S,sf}_{0,\zeta} = R_{\zeta} \begin{pmatrix}
0 & 0\\0 & E_{kk}
\end{pmatrix}R_{\zeta}^{-1}.
\end{align}
A calculation shows
\begin{align}
\label{slowfastunstable2}
A^{I,U,sf}_{0,\zeta} = \tilde{\mu}^{U,sf}_\zeta P^{U,sf}_{\zeta} = - \zeta \frac{(G_0)_{kk}}{\tilde{\mu}^{S,sf}_\zeta} P^{U,sf}_{\zeta}
\end{align}
where
\begin{align}
\label{PUsf}
P^{U,sf}_{\zeta} = R_{\zeta} \begin{pmatrix}
E_{kk} & 0\\0 & 0
\end{pmatrix}R_{\zeta}^{-1}.
\end{align}
By combining \eqref{slowfastunstable1} and \eqref{slowfastunstable2} there exists a $\varepsilon$-smooth and $\zeta$-holomorphic function $H = H(\varepsilon,\zeta)$ such that for $\varepsilon$ small and $\zeta \in D_{r_0}$
\begin{align*}
A^{I,U,sf}_{\varepsilon,\zeta} = \tilde{\mu}^{U,sf}_\zeta P^{U,sf}_{\zeta} + \varepsilon \zeta H(\varepsilon,\zeta).
\end{align*}
We find an unstable slow-fast eigenvalue $\mu^{U,sf}_{\varepsilon,\zeta}$ of $A^I_{\varepsilon,\zeta}$ with $\varepsilon$-smooth and $\zeta$-holomorphic one dimensional eigenprojection $P^{U,sf}_{\varepsilon,\zeta}$ and
\begin{align*}
\mu^{U,sf}_{\varepsilon,\zeta} & = - \varepsilon \zeta\left(\frac{(G_0)_{kk}}{\tilde{\mu}^{S,sf}_\zeta} + \mathcal{O}(\varepsilon)\right),\\
P^{U,sf}_{0,\zeta} & = P^{U,sf}_{\zeta}.
\end{align*}
For the last reduction step consider
\begin{align*}
A^{I,ss}_{\varepsilon,\zeta} := \frac{1}{\varepsilon \zeta}A^{I,U,sf}_{\varepsilon,\zeta}(\ID_{2n} - P^{U,sf}_{\varepsilon,\zeta}) = \frac{1}{\varepsilon \zeta}A^{I,s}_{\varepsilon,\zeta}(\ID_{2n} - P^{sf}_{\varepsilon,\zeta})
\end{align*}
where $P^{sf}= P^{S,sf} + P^{U,sf}$. Setting
\begin{align*}
\tilde{F}_0^{-1} := \begin{pmatrix}
(M^<_0)^{-1} & 0 & 0\\ 0 & 0 & 0 \\ 0 & 0 & (M^>_0)^{-1}
\end{pmatrix}
\end{align*}
and noting
\begin{align*}
\partial_\varepsilon A^I_{0,\zeta} & = \begin{pmatrix}
\partial_\varepsilon F_0(1) & \ID_n\\ \zeta G_0 & 0
\end{pmatrix}\\
\partial_\varepsilon^2 A^I_{0,\zeta} & = \begin{pmatrix}
\partial_\varepsilon^2 F_0(1) & 0\\ \zeta \partial_\varepsilon G_0(1) & 0
\end{pmatrix}\\
\partial_\varepsilon P_{0;0,\zeta} & = \begin{pmatrix}
0 & - \tilde{F}_0^{-1}\\  - \zeta G_0 \tilde{F}^{-1}_0 & 0
\end{pmatrix}\\
P^{sf}_{0,\zeta} & = \begin{pmatrix}
E_{kk} & 0 \\ 0 & \underline{G}_0 E_{kk}
\end{pmatrix}
\end{align*}
one finds with $p^{ss} := \ID_n - \underline{G}_0 E_{kk}$
\begin{align*}
A^{I,ss}_{0,\zeta} = \frac{1}{2}\begin{pmatrix}
0 & 0\\0 & - p^{ss} G_0 \tilde{F}^{-1}_0 p^{ss}
\end{pmatrix}.
\end{align*}
Because
\begin{align*}
(\tilde{R}^t+E_{kk}) p^{ss} G_0 \tilde{F}^{-1}_0 p^{ss} G_0(\tilde{R} + &(G_0)_{kk}^{-1} E_{kk})\\
& = \diag(\mu_1^{-1},\ldots,\mu_{k-1}^{-1},0,\mu_{k+1}^{-1},\ldots,\mu_n^{-1})
\end{align*}
we find one dimensional eigenprojections $P^{S,ss}_{j;\varepsilon,\zeta},j>k,$ and $P^{U,ss}_{j;\varepsilon,\zeta},j<k,$ associated to $n-k$ stable eigenvalues $\mu^{S,ss}_{j;\varepsilon,\zeta},j>k,$ and $k-1$ unstable eigenvalues $\mu^{U,ss}_{j;\varepsilon,\zeta},j<k,$ of $A^I_{\varepsilon,\zeta}$ satisfying
\begin{align}
\nonumber
\mu^{S/U,ss}_{j;\varepsilon,\zeta} & = - \varepsilon^2 \zeta\left( \frac{\mu^{-1}_j}{2} + \mathcal{O}(\varepsilon) \right)\\
\label{PSUss}
P^{S/U,ss}_{j;0,\zeta} & = \begin{pmatrix}
0 & 0\\0 & G_0 \tilde{R} E_{jj} \tilde{R}^t
\end{pmatrix}.
\end{align}
For $\varepsilon$ small enough and all $\zeta \in D_{r_0}$ the vectors
\begin{align*}
R^{S,f}_{j;\varepsilon,\zeta} & = P^{S,f}_{\varepsilon,\zeta}\begin{pmatrix} e_j\\ 0 \end{pmatrix} & \textnormal{for} \;\; j< k,\\
R^{U,f}_{j;\varepsilon,\zeta} & = P^{U,f}_{\varepsilon,\zeta}\begin{pmatrix} e_j\\ 0 \end{pmatrix} & \textnormal{for} \;\; j> k,\\
R^{S,sf}_{\varepsilon,\zeta} & = P^{S,sf}_{\varepsilon,\zeta}\begin{pmatrix}e_k \\ -\tilde{\mu}^{U,sf}_{\zeta} \underline{G}_0 e_k \end{pmatrix}, &\\
R^{U,sf}_{\varepsilon,\zeta} & = P^{U,sf}_{\varepsilon,\zeta}\begin{pmatrix}e_k \\ -\tilde{\mu}^{S,sf}_{\zeta} \underline{G}_0 e_k \end{pmatrix},& \\
R^{S,ss}_{j;\varepsilon,\zeta} & = P^{S,ss}_{j;\varepsilon,\zeta}\begin{pmatrix} 0 \\ \underline{G}_0r_j\end{pmatrix}& \textnormal{for} \;\; j > k,\\
R^{U,ss}_{j;\varepsilon,\zeta} & = P^{U,ss}_{j;\varepsilon,\zeta}\begin{pmatrix} 0 \\ \underline{G}_0r_j\end{pmatrix}& \textnormal{for} \;\; j < k.
\end{align*}
define $\varepsilon$-smooth and $\zeta$-holomorphic choices of bases for the invariant subspaces of $\tilde{A}_{\varepsilon,\zeta}$ which are given by the images of the respective (total) projections. Using the explicit expressions \eqref{PSf0}, \eqref{PUf0}, \eqref{PSsf}, \eqref{PUsf}, \eqref{PSUss} at $\varepsilon = 0$ finishes the proof of the lemma.
\end{proof}
\begin{rem}
\label{remsplittinginner}
By Corollary \ref{cordispersion} we have
\begin{align*}
\CH^+\backslash \lbrace 0 \rbrace \subset \Lambda_\varepsilon.
\end{align*}
Hence, in the situation of Lemma \ref{caplemma} the real parts of $A^{I,\pm}_{\varepsilon,\zeta}$ may not change signs as $(\varepsilon,\zeta)$ is varied in $(0,\varepsilon_0] \times (D_{r_0} \backslash \lbrace 0 \rbrace)$, and the eigenvalues with superscript $S$ resp.~$U$ indeed have real parts smaller resp.~greater than $0$ if $\varepsilon,\zeta \neq 0$.
\end{rem}
With the help of Lemma \ref{caplemma} and singular perturbation theory we give a detailed construction of the (rescaled) Evans bundles for $\phi_\varepsilon$ following \cite{FS02}. Define the one dimensional submanifolds
\begin{align*}
\CS^{S,f}_0 & = J \times \lbrace S^f_0 \rbrace \subset J \times \GG^{2n}_{k-1}(\CC)\\
\CS^{U,f}_0 & = J \times \lbrace U^f_0 \rbrace \subset J \times \GG^{2n}_{n-k}(\CC)\\
\NH_0 & = J \times \lbrace N_0 \rbrace \subset J \times \GG^{2n}_{n+1}(\CC).
\end{align*}
The statement for the fast part reads:
\begin{lem}
\label{leminnerfast}
For all $r_0 > 0$ there exists $\varepsilon_0 = \varepsilon_0(r_0)$ such that for all $\varepsilon \in [0,\varepsilon_0]$ there are holomorphic mappings
\begin{align*}
H^{+,f}_\varepsilon: D_{r_0} & \rightarrow \GG^{2n}_{k-1}(\CC)\\
H^{-,f}_\varepsilon: D_{r_0} & \rightarrow \GG^{2n}_{n-k}(\CC)
\end{align*}
depending smoothly on $\varepsilon$ and the solutions $X^+$ and $X^-$ of
\begin{align*}
\begin{aligned}
{X^+}^\prime & = \Gamma^{k-1} A^I_{\varepsilon,\zeta}(\chi_\varepsilon)(X^+), && X^+(0) = H^{+,f}_\varepsilon(\zeta), \;\; \textnormal{on} \;\; \GG^{2n}_{k-1}(\CC),\\
{X^-}^\prime & = \Gamma^{n-k} A^I_{\varepsilon,\zeta}(\chi_\varepsilon)(X^-), && X^-(0) = H^{-,f}_\varepsilon(\zeta),  \;\; \textnormal{on} \;\; \GG^{2n}_{n-k}(\CC),
\end{aligned}
\end{align*}
converge on the left and right ends:
\begin{align*}
X^+(\pm \infty) & = S^{\pm,f}_{\varepsilon,\zeta}\\
X^-(\pm \infty) & = U^{\pm,f}_{\varepsilon,\zeta}.
\end{align*}
\end{lem}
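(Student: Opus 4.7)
The plan is to follow the strategy of Lemma~4 in \cite{FS02}: turn each of the Grassmannian equations into an autonomous slow-fast system by coupling it with the scalar slow equation for $\chi_\varepsilon$, and then apply geometric singular perturbation theory. Concretely, I study
\begin{align*}
\tau^\prime &= \varepsilon(1-\tau^2)h_\varepsilon(\tau),\\
{X^+}^\prime &= \Gamma^{k-1}\tilde A_{\varepsilon,\zeta}(\tau)(X^+),
\end{align*}
on $J\times\GG^{2n}_{k-1}(\CC)$, and the analogous system on $J\times\GG^{2n}_{n-k}(\CC)$ for $X^-$; solutions with $\tau(0)=0$ correspond via $\tau=\chi_\varepsilon$ to the non-autonomous ones in the statement.

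At $\varepsilon=0$, $\tilde A_{0,\zeta}(\tau)\equiv\tilde A_0$ is independent of $\tau$ and $\zeta$ by Lemma~\ref{caplemma}, so the Grassmannian flow is frozen in $\tau$. A direct inspection of the Grassmannian linearization (the eigenvalues $\mu-\lambda$ with $\lambda\in\sigma(\tilde A_0|_V)$ and $\mu\in\sigma(\tilde A_0|_{\text{complement}})$, i.e.\ the computation underlying Corollary~\ref{corhyperattractor}) shows that $S_0^f$ is a hyperbolic fixed point on $\GG^{2n}_{k-1}(\CC)$ whose linearization has spectrum bounded below by $|\lambda_{k-1}|>0$, and symmetrically that $U_0^f$ is a hyperbolic fixed point on $\GG^{2n}_{n-k}(\CC)$ with spectrum bounded above by $-\lambda_{k+1}<0$. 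Consequently $\mathcal M^{+,f}_0:=J\times\{S_0^f\}$ and $\mathcal M^{-,f}_0:=J\times\{U_0^f\}$ are compact, normally hyperbolic critical manifolds (with boundary) for the respective coupled systems. Lemma~\ref{caplemma}(i) moreover shows that for every $\varepsilon\geq 0$ and $\zeta\in D_{r_0}$ the full autonomous system has boundary equilibria $(\pm 1,S^{\pm,f}_{\varepsilon,\zeta})$ resp.\ $(\pm 1,U^{\pm,f}_{\varepsilon,\zeta})$ converging smoothly to those of $\mathcal M^{\pm,f}_0$ as $\varepsilon\to 0$.

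An application of Fenichel's theorem \cite{F79} in its analytic variant, combined at the boundary $\tau=\pm 1$ with the stable/unstable-manifold theorem for the hyperbolic boundary equilibria above, produces smooth families $\mathcal M^{\pm,f}_\varepsilon$ of one-dimensional invariant graphs over $J$, defined for $\varepsilon\in[0,\varepsilon_0]$, smooth in $\varepsilon$ and holomorphic in $\zeta\in D_{r_0}$, whose endpoints are precisely these equilibria. The reduced dynamics on $\mathcal M^{\pm,f}_\varepsilon$ is to leading order $\tau^\prime=\varepsilon(1-\tau^2)h_\varepsilon(\tau)$ with $h_\varepsilon>0$, so each manifold is a monotone heteroclinic from its left endpoint to its right endpoint. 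I then define $H^{\pm,f}_\varepsilon(\zeta)$ to be the $X$-fiber of $\mathcal M^{\pm,f}_\varepsilon$ over $\tau=0$; the orbits $X^\pm$ through $(0,H^{\pm,f}_\varepsilon(\zeta))$ then satisfy $X^+(\pm\infty)=S^{\pm,f}_{\varepsilon,\zeta}$ resp.\ $X^-(\pm\infty)=U^{\pm,f}_{\varepsilon,\zeta}$ by construction, and the regularity claims follow from the smoothness/holomorphy properties transferred by Fenichel.

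The main obstacle will be the treatment of the boundary at $\tau=\pm 1$, where the classical Fenichel theorem produces a slow manifold only in the interior of $J$. One must splice that interior slow manifold to the local stable/unstable manifolds of the boundary equilibria furnished by Lemma~\ref{caplemma} and verify smooth $\varepsilon$-dependence together with holomorphic $\zeta$-dependence across the splicing. The $\zeta$-holomorphy is the delicate point and would be obtained either by invoking an analytic version of Fenichel's graph-transform argument or, a~posteriori, via a uniform contraction-mapping fixed-point formulation whose contraction constant depends analytically on $\zeta$.
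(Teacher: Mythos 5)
Your proposal is correct and reconstructs exactly the geometric-singular-perturbation argument the paper invokes by merely citing Lemma~4 of \cite{FS02}: couple the Grassmannian flow to the slow $\tau$-equation, observe that at $\varepsilon=0$ the critical manifolds $J\times\{S_0^f\}$ (normally repelling, with gap of order $|\lambda_{k-1}|$) and $J\times\{U_0^f\}$ (normally attracting, with gap of order $\lambda_{k+1}$) are normally hyperbolic, apply Fenichel theory to obtain perturbed slow manifolds whose endpoints at $\tau=\pm1$ are the equilibria of Lemma~\ref{caplemma}, and define $H^{\pm,f}_\varepsilon(\zeta)$ as the $\tau=0$ fibre. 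The boundary and $\zeta$-holomorphy issues you flag are the right ones to worry about and are resolved exactly as you indicate — uniqueness of the persistent invariant manifold plus compactness of $J\times\GG^{2n}_m(\CC)$ — paralleling the paper's explicit treatment of $\mathcal{N}_{\varepsilon,\zeta}$ immediately following the lemma.
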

\begin{proof}
Let $r_0 > 0$ and choose $\varepsilon_0$ so small such that the conclusions of Lemma \ref{caplemma} hold. Since
\begin{align*}
A^I_{0,\zeta}(\tau) = A^I_0 := \begin{pmatrix}
\diag(\lambda_1,\ldots,\lambda_n) & 0 \\ 0 & 0
\end{pmatrix}
\end{align*}
we find by Corollary \ref{corhyperattractor} that $\CS^{S,f}_0$ is a repelling normally hyperbolic critical manifold for the system
\begin{align}
\label{Grassfaststable}
\left\lbrace \begin{aligned}
\tau^\prime & = \varepsilon (1-\tau^2)h_\varepsilon(\tau)\\
X^\prime & = \Gamma^{k-1}(A^I_{\varepsilon,\zeta}(\tau))X
\end{aligned} \right. \;\; \textnormal{on} \;\; J \times \GG^{2n}_{k-1}(\CC).
\end{align}
By Fenichel theory \cite{F79} and compactness of $J \times \GG^{2n}_{k-1}(\CC)$ it perturbs smoothly to a unique repelling slow manifold $\CS^{S,f}_{\varepsilon,\zeta}$. Because $\CS^{S,f}_0$ is the graph of
\begin{align*}
C^{S,f}_0(\tau) := S^f_0
\end{align*}
we deduce from the presentation of geometric singular perturbation theory in \cite{J95} that for $\varepsilon$ small enough there exists an $\varepsilon$-smooth and $\zeta$-holomorphic function
\begin{align*}
C^{S,f}_{\varepsilon,\zeta}: J \rightarrow \GG^{2n}_{k-1}(\CC)
\end{align*}
such that $C^{S,f}_{0,\zeta}=C^{S,f}_0$ and $\CS^{S,f}_{\varepsilon,\zeta}$ is a graph of $C^{S,f}_{\varepsilon,\zeta}$ for all $\zeta \in D_{r_0}$. Since $(\pm 1,S^{\pm,f}_{\varepsilon,\zeta})$ are fixed points of \eqref{Grassfaststable} and $S^{\pm,f}_{\varepsilon,\zeta}$ is $\varepsilon$-close to $S^f_0$ we find 
\begin{align*}
(\pm 1,S^{\pm,f}_{\varepsilon,\zeta}) \in \CS^{S,f}_{\varepsilon,\zeta}\;\; \textnormal{and}\;\; C^{S,f}_{\varepsilon,\zeta}(\pm 1) = S^{\pm,f}_{\varepsilon,\zeta}.
\end{align*}
In a similar fashion one finds a unique attracting slow manifold $\CS^{U,f}_{\varepsilon,\zeta}$ of
\begin{align}
\label{Grassfastunstable}
\left\lbrace \begin{aligned}
\tau^\prime & = \varepsilon (1-\tau^2)h_\varepsilon(\tau)\\
X^\prime & = \Gamma^{n-k}(A^I_{\varepsilon,\zeta}(\tau))X
\end{aligned} \right. \;\; \textnormal{on} \;\; J \times \GG^{2n}_{n-k}(\CC)
\end{align}
which perturbs from $\CS^{U,f}_0$ and is given as the graph of a smooth function $C^{U,f}_{\varepsilon,\zeta}$ with $C^{U,f}_{0,\zeta}(\tau) = U^f_0$ and
\begin{align*}
C^{U,f}_{\varepsilon,\zeta}(\pm 1) = U^{\pm,f}_{\varepsilon,\zeta}.
\end{align*}
With an appropriately small choice of $\varepsilon_0>0$ define for all $(\varepsilon,\zeta) \in [0,\varepsilon_0] \times D_{r_0}$ the fast bundles by setting
\begin{align*}
H^{-,f}_{\varepsilon}(\zeta) := C^{U,f}_{\varepsilon,\zeta}(0) \;\; \textnormal{and} \;\; H^{+,f}_{\varepsilon}(\zeta) := C^{S,f}_{\varepsilon,\zeta}(0).
\end{align*}
From their definitions the bundles $H^{-,f}_\varepsilon$ and $H^{+,f}_\varepsilon$ inherit the desired smoothness properties form $C^{U,f}_{\varepsilon,\cdot}$ and $C^{S,f}_{\varepsilon,\cdot}$. Because $H^{-,f}_{\varepsilon}(\zeta)$ resp.~$H^{+,f}_{\varepsilon}(\zeta)$ are $\varepsilon$-close to $U^f_0$ resp.~$S^f_0$ transversality of the fast bundles follows from transversality of $S^f_0$ and $U^f_0$.
\end{proof}
Let $r_0>0$ and choose $\varepsilon_0>0$ so small that the conclusions of Lemma \ref{caplemma} hold. We turn to the slow part of the Evans bundles. By Corollary \ref{corGrasssaddle} $N_0$ is a hyperbolic saddle for the constant coefficient equation
\begin{align*}
X^\prime = \Gamma^{n+1}(A^I_0)X \;\; \textnormal{on} \;\; \GG^{2n}_{n+1}(\CC).
\end{align*}
Hence, the manifold
\begin{align*}
\NH_0 = J \times \lbrace N_0 \rbrace \subset J \times \GG^{2n}_{n+1}(\CC)
\end{align*}
is a normally hyperbolic critical manifold of the system
\begin{align}
\label{Grassslow}
\left\lbrace \begin{aligned}
\tau^\prime & = \varepsilon (1-\tau^2)h_\varepsilon(\tau)\\
X^\prime & = \Gamma^{n+1}(A^I_{\varepsilon,\zeta}(\tau))X
\end{aligned} \right. \;\; \textnormal{on} \;\; J \times \GG^{2n}_{n+1}(\CC).
\end{align}
Again by Fenichel theory and compactness of $J \times \GG^{2n}_{n+1}(\CC)$ the critical manifold $\NH_0$ perturbs to a unique slow manifold $\NH_{\varepsilon,\zeta}$ depending smoothly on $\varepsilon$ and holomorphically on $\zeta$. It is the graph of a smooth function
\begin{align*}
N_{\varepsilon,\zeta}: J \rightarrow \GG^{2n}_{n+1}(\CC).
\end{align*}
Set $\hat{p}:=(p_1,\ldots,p_{k-1},p_{k+1},\ldots,p_n)^t\in \CC^{n-1}$ for $p \in \CC^n$. By choosing an appropriate canonical chart for $\GG^{2n}_{n+1}(\CC)$ centered at $N_0$ there is a function
\begin{align*}
\tilde{N}_{\varepsilon,\zeta}: J \rightarrow \CC^{(n-1) \times (n+1)}
\end{align*}
depending smoothly on $\varepsilon$ and holomorphically on $\zeta$ such that
\begin{align*}
N_{\varepsilon,\zeta}(\tau) = \left\lbrace \begin{pmatrix} p\\ q \end{pmatrix} \in \CC^{2n}\left\vert\; \hat{p}\right. = \tilde{N}_{\varepsilon,\zeta}(\tau) \begin{pmatrix}
p_k\\q
\end{pmatrix}\right\rbrace
\end{align*}
and the slow equations associated to \eqref{Grassslow} on $\NH_{\varepsilon,\zeta}$ read in linear $(p_k,q)$-coordinates
\begin{align*}
\dot{\tau} & = (1-\tau^2) h_\varepsilon(\tau)\\
\dot{p}_k & = 2 a \tau p_k + q_k + \OO(\varepsilon) \begin{pmatrix}
p_k\\q
\end{pmatrix}\\
\dot{q}_j & = \zeta (G_0)_{jk}p_k + \OO(\varepsilon) \begin{pmatrix}
p_k\\q
\end{pmatrix}
\end{align*}
with $p_j,j \neq k,$ being the fast variables and $p_k, q_1,\ldots,q_n$ being the slow variables.
By the eigenvalue expansions in Lemma \ref{caplemma} we obtain the analogue of Lemma 4 (ii) and Corollary 3 in \cite{FS02}:
\begin{lem}
\label{leminnnerslow}
For all $r_0 > 0$ there exists $\varepsilon_0 = \varepsilon_0(r_0)$ such that the following two statements hold:\\
(i) There are unique holomorphic bundles
\begin{align*}
H^{+,s}_\varepsilon: D_{r_0} & \rightarrow \GG^{2n}_{n-k+1}(\CC)\\
H^{-,s}_\varepsilon: D_{r_0} & \rightarrow \GG^{2n}_{k}(\CC)
\end{align*}
depending smoothly on $\varepsilon$ such that for all $\varepsilon > 0$ and $\zeta \in D_{r_0}$ the solutions $Y^+$ and $Y^-$ of
\begin{align*}
\begin{aligned}
{Y^+}^\prime & = \Gamma^{n-k+1}(A^I_{\varepsilon,\zeta}(\chi_\varepsilon))(Y^+), && Y^+(0) = H^{+,s}_\varepsilon(\zeta), \;\; \textnormal{on} \;\; \GG^{2n}_{n-k+1}(\CC),\\
{Y^-}^\prime & = \Gamma^{k}(A^I_{\varepsilon,\zeta}(\chi_\varepsilon))(Y^-), && Y^-(0) = H^{-,s}_\varepsilon(\zeta),  \;\; \textnormal{on} \;\; \GG^{2n}_{k}(\CC),
\end{aligned}
\end{align*}
converge on the right resp.~left end:
\begin{align*}
Y^+(+ \infty) & = S^{+,s}_{\varepsilon,\zeta}\\
Y^-(- \infty) & = U^{-,s}_{\varepsilon,\zeta}.
\end{align*}
(ii) Restrict \eqref{LapHsCLautoin} on the slow manifold $\mathcal{N}_{\varepsilon,\zeta}$, i.e.~
\begin{align*}
\dot{\tau}= (1-\tau^2) h_{\varepsilon}(\tau), \;\; \dot{\xi} = \varepsilon^{-1} \tilde{A}_{\varepsilon,\zeta}(\tau) \xi
\end{align*}
on $\mathcal{N}_{\varepsilon,\zeta}$, and consider the $1$-dimensional Grassmannian version of the restricted flow:
\begin{align*}
\dot{Z} = \varepsilon^{-1} \Gamma^1 A^I_{\varepsilon,\zeta}(\chi_\varepsilon) (Z), \;\; Z(0) \in \lbrace V \in \GG^{2n}_1(\CC): \;\; V \subset N_{\varepsilon,\zeta}(0) \rbrace.
\end{align*}
Then there are unique holomorphic bundles
\begin{align*}
H^{\pm,sf}_{\varepsilon}: D_{r_0} \rightarrow \GG^{2n}_1(\CC)
\end{align*}
depending smoothly on $\varepsilon \in [0,\varepsilon_0]$ such that the solutions $Z^{\pm}$ of
\begin{align*}
\dot{Z}^{\pm} = \varepsilon^{-1} \Gamma^1 A^I_{\varepsilon,\zeta}(\chi_\varepsilon) (Z^\pm), \;\; Z^\pm(0) = H^{\pm,sf}_{\varepsilon,\zeta}
\end{align*}
satisfy
\begin{align*}
Z^+(+\infty) = S^{+,sf}_{\varepsilon,\zeta} \;\; \textnormal{and} \;\; Z^-(-\infty) = U^{-,sf}_{\varepsilon,\zeta}.
\end{align*}
Furthermore, at $\varepsilon = 0$ we have
\begin{align*}
H^{+,s}_0(\zeta) = H^{+,sf}_0(\zeta) \oplus S^{ss} \;\; \textnormal{and} \;\; H^{-,s}_0(\zeta) = H^{-,sf}_0(\zeta) \oplus U^{ss}.
\end{align*}
\end{lem}
\begin{proof}
Let $r_0>0$, and choose $\varepsilon_0>0$ so small such that the conclusions of Lemma \ref{caplemma} hold and the slow manifold $\NH_{\varepsilon,\zeta}$ exists for all $(\varepsilon,\zeta) \in [0,\varepsilon_0] \times D_{r_0}$ depending smoothly on its parameters. We shall only treat the right end $\tau = +1$, the statement on the bundles with asymptotics at $\tau = -1$ is proved in a similar fashion with obvious modifications.\\
It is convenient to change basis such that $N^+_{\varepsilon,\zeta}$ is constant and spanned by the vectors
\begin{align*}
\begin{pmatrix} e_k \\0 \end{pmatrix}, \begin{pmatrix} 0 \\ e_1 \end{pmatrix}, \ldots, \begin{pmatrix} 0 \\ e_n \end{pmatrix},
\end{align*}
and such that the (linear) slow $(p_k,q)$-flow is given by a diagonal matrix at $\tau = +1$. To this end let $\DS_{\varepsilon,\zeta}^+$ be the $\varepsilon$-smooth and $\zeta$-holomorphic solution of
\begin{align*}
\partial_\varepsilon \DS_{\varepsilon,\zeta}^+ = Q^+_{\varepsilon,\zeta} \DS_{\varepsilon,\zeta}^+, \;\; \DS_{0,\zeta}^+ = \ID_{2n}
\end{align*}
where
\begin{align*}
Q^+_{\varepsilon,\zeta} := & (\partial_{\varepsilon} P^{+,S,f}_{\varepsilon,\zeta}) P^{+,S,f}_{\varepsilon,\zeta} +(\partial_{\varepsilon} P^{+,U,f}_{\varepsilon,\zeta}) P^{+,U,f}_{\varepsilon,\zeta}\\
& +(\partial_{\varepsilon} P^{+,S,sf}_{\varepsilon,\zeta}) P^{+,S,sf}_{\varepsilon,\zeta} + (\partial_{\varepsilon} P^{+,U,sf}_{\varepsilon,\zeta}) P^{+,U,sf}_{\varepsilon,\zeta}\\
& + \sum_{j>k}(\partial_{\varepsilon} P^{+,S,ss}_{j;\varepsilon,\zeta}) P^{+,S,ss}_{\varepsilon,\zeta} + \sum_{j<k}(\partial_{\varepsilon} P^{+,U,ss}_{j;\varepsilon,\zeta}) P^{+,U,ss}_{\varepsilon,\zeta}
\end{align*}
where the above projections have been constructed in the proof of Lemma \ref{caplemma}. Then setting
\begin{align*}
\mathcal{W}^+_{\varepsilon,\zeta} = \DS^+_{\varepsilon,\zeta} R^+_\zeta
\end{align*}
with $R^+_\zeta = R_\zeta$ from \eqref{diagR+} the change of basis
\begin{align*}
\mathcal{W}^+_{\varepsilon,\zeta} \xi^+ = \xi
\end{align*}
gives the desired equality
\begin{align*}
N^+_{\varepsilon,\zeta} = \SP \left\lbrace \begin{pmatrix} e_k \\0 \end{pmatrix}, \begin{pmatrix} 0 \\ e_1 \end{pmatrix}, \ldots, \begin{pmatrix} 0 \\ e_n \end{pmatrix} \right\rbrace
\end{align*}
for all $\varepsilon$ and $\zeta$. In fact we have fixed the space
\begin{align*}
S^{+,sf}_{\varepsilon,\zeta} & = \lbrace 0 \rbrace^{k-1} \times \lbrace 0 \rbrace \times \lbrace 0 \rbrace^{n-k} \times \lbrace 0 \rbrace^{k-1} \times \CC \times \lbrace 0 \rbrace^{n-k},\\
U^{+,sf}_{\varepsilon,\zeta} & = \lbrace 0 \rbrace^{k-1} \times \CC \times \lbrace 0 \rbrace^{n-k} \times  \lbrace 0 \rbrace^{k-1} \times \lbrace 0 \rbrace \times \lbrace 0 \rbrace^{n-k},\\
U^{+,ss}_{\varepsilon,\zeta} & = \lbrace 0 \rbrace^{k-1} \times \lbrace 0 \rbrace \times \lbrace 0 \rbrace^{n-k} \times \CC^{k-1} \times \lbrace 0 \rbrace \times \lbrace 0 \rbrace^{n-k},\\
S^{+,ss}_{\varepsilon,\zeta} & = \lbrace 0 \rbrace^{k-1} \times \lbrace 0 \rbrace \times \lbrace 0 \rbrace^{n-k} \times \lbrace 0 \rbrace^{k-1} \times \lbrace 0 \rbrace \times \CC^{n-k},
\end{align*}
at constant values, too. The system \eqref{LapHsCLautoin} reads
\begin{align}
\label{LapHsCLautoinslow}
\left\lbrace \begin{aligned}
\tau^\prime & = \varepsilon (1-\tau^2)h_\varepsilon(\tau)\\
\xi^{+\prime} & = A^{I,+}_{\varepsilon,\zeta}(\tau) \xi^+
\end{aligned} \right.
\end{align}
with
\begin{align*}
A^{I,+}_{\varepsilon,\zeta}(\tau) = (\mathcal{W}^+_{\varepsilon,\zeta})^{-1} A^I_{\varepsilon,\zeta}(\tau) \mathcal{W}^+_{\varepsilon,\zeta}.
\end{align*}
We still have at $\varepsilon = 0$
\begin{align*}
A^{I,+}_{0,\zeta}(\tau) = A^I_0.
\end{align*}
Hence, with $\xi^+ = (p^+,q^+)$ the fast variables are again $p_j^+, j \neq k,$ and the slow variables are $p_k^+, q_1^+, \ldots, q_n^+$. The slow equations for the system \eqref{LapHsCLautoinslow} on $\NH_{\varepsilon,\zeta}$ in $(p_k^+,q^+)$-coordinates read
\begin{align}
\label{slowinner+}
\left\lbrace \begin{aligned}
\dot{\tau} & = \varepsilon (1-\tau^2)h_\varepsilon(\tau)\\
\dot{\begin{pmatrix}
p^+_k \\ q^+
\end{pmatrix}} & = \tilde{A}^+_{\varepsilon,\zeta}(\tau) \begin{pmatrix}
p^+_k \\ q^+
\end{pmatrix}
\end{aligned} \right.
\end{align}
with a matrix-valued function $\tilde{A}^+_{\varepsilon,\zeta}(\tau) \in \CC^{(n+1)\times(n+1)}$ depending smoothly on $\varepsilon$ and $\tau$, and holomorphically on $\zeta$. At $\tau = +1$ we have
\begin{align*}
N_{\varepsilon,\zeta}(+1) = \left\lbrace (p,q) \in \CC^{2n}\left\vert\; \hat{p} \right. = 0\right\rbrace
\end{align*}
and
\begin{align*}
A^{I,+}_{\varepsilon,\zeta}(+1) = \diag(A^{I,+,S}_{\varepsilon,\zeta}, \mu^{+,U,sf}_{\varepsilon,\zeta}, A^{I,+,U}_{\varepsilon,\zeta}, (\mu^{+,U,ss}_{j;\varepsilon,\zeta})_{j<k},\mu^{+,S,sf}_{\varepsilon,\zeta},(\mu^{+,S,ss}_{j;\varepsilon,\zeta})_{j>k})
\end{align*}
for appropriate matrices
\begin{align*}
A^{I,+,S}_{\varepsilon,\zeta} \in \CC^{(k-1) \times (k-1)} \;\; \textnormal{and} \;\; A^{I,+,U}_{\varepsilon,\zeta} \in \CC^{(n-k) \times (n-k)}.
\end{align*}
Hence, we find for $\varepsilon$ and $\zeta$ that at the right end the matrix describing the linear slow $(p^+_k,q^+)$-flow in \eqref{slowinner+} is given by the diagonal matrix
\begin{align}
\label{diagslow+}
\tilde{A}^+_{\varepsilon,\zeta}(+1) = \varepsilon^{-1} \diag(\mu^{+,U,sf}_{\varepsilon,\zeta}, (\mu^{+,U,ss}_{j;\varepsilon,\zeta})_{j<k},\mu^{+,S,sf}_{\varepsilon,\zeta},(\mu^{+,S,ss}_{j;\varepsilon,\zeta})_{j>k}).
\end{align}
%More precisely, by the definition of the reduced vector field on a slow manifold we have
%\begin{align*}
%\tilde{A}^+_{\varepsilon,\zeta}(\tau) = \varepsilon^{-1}\begin{pmatrix} e_k^t & 0\\ 0 & \ID_n \end{pmatrix} A^{I,+}_{\varepsilon,\zeta}(\tau) \begin{pmatrix}\begin{pmatrix}\Pi_1 & 0 & 0\\ 0 & 0 & 1\\ 0& \Pi_2 & 0 \end{pmatrix} & 0 \\0 & \ID_n \end{pmatrix} \begin{pmatrix} \tilde{N}_{\varepsilon,\zeta}(\tau) \\ \ID_{n+1} \end{pmatrix}
%\end{align*}
In $(p_k^+,q^+)$-coordinates the (constant) spaces $S^{+,sf}_{\varepsilon,\zeta}, U^{+,sf}_{\varepsilon,\zeta}, S^{+,ss}_{\varepsilon,\zeta}$ and $U^{+,ss}_{\varepsilon,\zeta}$ have unique constant counter parts give by
\begin{align*}
\tilde{S}^{+,sf}_{\varepsilon,\zeta} & = \tilde{S}^{+,sf}_0 := \lbrace 0 \rbrace \times \lbrace 0 \rbrace^{k-1} \times \CC \times \lbrace 0 \rbrace^{n-k},\\
\tilde{U}^{+,sf}_{\varepsilon,\zeta} & = \tilde{U}^{+,sf}_0 := \CC \times \lbrace 0 \rbrace^{k-1} \times \lbrace 0 \rbrace \times \lbrace 0 \rbrace^{n-k},\\
\tilde{S}^{+,ss}_{\varepsilon,\zeta} & = \tilde{S}^{+,sf}_0 := \lbrace 0 \rbrace \times \lbrace 0 \rbrace^{k-1} \times \lbrace 0 \rbrace \times \CC^{n-k},\\
\tilde{U}^{+,ss}_{\varepsilon,\zeta} & = \tilde{U}^{+,ss}_0 := \lbrace 0 \rbrace \times \CC^{k-1} \times \lbrace 0 \rbrace \times \lbrace 0 \rbrace^{n-k}.
\end{align*}
Consider the Grassmannian flow
\begin{align}
\label{slowflow+}
\left\lbrace \begin{aligned}
\dot{\tau} & = (1-\tau^2)h_\varepsilon(\tau)\\
\tilde{Y}^\prime & = \Gamma^{n-k+1} (\tilde{A}^{I,+}_{\varepsilon,\zeta}(\tau))\tilde{Y}
\end{aligned} \right. \;\; \textnormal{on} \;\; J \times \GG^{n+1}_{n-k+1}(\CC).
\end{align}
associated to \eqref{slowinner+}, and define
\begin{align*}
\mathcal{C}^{+,s} = \lbrace +1 \rbrace \times \lbrace \tilde{Y} \in \GG^{n+1}_{n-k+1}(\CC)| \; \tilde{S}^{+,sf}_0 \subset \tilde{Y} \rbrace.
\end{align*}
which is an invariant manifold of \eqref{slowflow+} for all $\varepsilon$ and $\zeta$ by \eqref{diagslow+}. For $\varepsilon = 0$ one deduces from Lemma \ref{lemlocalGrass},
\begin{align}
\label{diagslow+eps0}
\tilde{A}^+_{0,\zeta}(+1) = \diag(\tilde{\mu}^{+,U,sf}_{\zeta},0, \ldots,0,\tilde{\mu}^{+,S,sf}_{\zeta},0,\ldots,0),
\end{align}
with $\tilde{\mu}^{+,S/U,sf}_{\zeta} = \tilde{\mu}^{S/U,sf}_{\zeta}$ from \eqref{slowfastEV}, and
\begin{align*}
\RE (\tilde{\mu}^{+,U,sf}_{\zeta} - \tilde{\mu}^{+,S,sf}_{\zeta}) = 2 \RE \sqrt{a^2 + (G_0)_{kk}\zeta} > 0
\end{align*}
that $C^{+,s}$ is a normally hyperbolic manifold for \eqref{slowflow+} with the $\tau$-direction being attracting and $C^{+,s}$ being repelling in $\lbrace \tau = 1 \rbrace$. Applying the theory on normally hyperbolic manifolds \cite{F71,F79,HPS77,S91} the stable manifold $W^S_{\varepsilon,\zeta}(C^{+,s})$ possesses an invariant fibration with one dimensional leaves $W^S_{\varepsilon,\zeta}((1,\tilde{Y}))$ each based at some point $(1,\tilde{Y}) \in C^{+,s}$. If the base point of the leaf is a fixed point of \eqref{slowflow+} then the leaf is itself invariant under the flow of \eqref{slowflow+}. The intersection of $\lbrace \tau = 0 \rbrace$ with the leaf $W^S_{\varepsilon,\zeta}((1,\tilde{S}^{+,s}))$ based at the fixed point
\begin{align*}
\tilde{S}^{+,s}_0 := \tilde{S}^{+,sf}_0 \oplus \tilde{S}^{+,ss}_0
\end{align*}
varies smoothly in $\varepsilon$ and holomorphically in $\zeta$. Hence, by setting
\begin{align*}
\tilde{H}^{+,s}_{\varepsilon}(\zeta) := W^S_{\varepsilon,\zeta}((1,\tilde{S}^{+,s}_0)) \cap \lbrace \tau = 0 \rbrace
\end{align*}
we obtain a holomorphic bundle depending smoothly on $\varepsilon$, and the solution $(\tau, \tilde{Y}^{+,s})$ of \eqref{slowflow+} starting at $(0,\tilde{H}^{+,s}_{\varepsilon}(\zeta))$ satisfies
\begin{align*}
(\tau,\tilde{Y}^{+,s})(+\infty) = (1,\tilde{S}^{+,s}_0).
\end{align*}
Noting that with our choice of basis we have
\begin{align*}
S^{+,s}_{\varepsilon,\zeta} = \lbrace 0 \rbrace^{n-1} \times \tilde{S}^{+,s}_0
\end{align*}
and returning to the full system \eqref{LapHsCLautoinslow} we find the desired slow Evans bundle
\begin{align*}
H^{+,s}_\varepsilon(\zeta) := \left\lbrace \begin{pmatrix}
p\\ q
\end{pmatrix} \in \CC^{2n} \left\vert \; \begin{pmatrix}
p_k\\ q
\end{pmatrix} \in \tilde{H}^{+,s}_{\varepsilon}(\zeta), \hat{p} = N_{\varepsilon,\zeta}(0) \begin{pmatrix}
p_k\\ q
\end{pmatrix} \right.\right\rbrace.
\end{align*}
This proves part (i) of the lemma. We continue with the resolution of the slow bundle $H^{+,s}_\varepsilon(\zeta)$ at $\varepsilon =0$. Using the definition of the reduced vector field associated to \eqref{LapHsCLautoinslow} on the slow manifold $\NH_{\varepsilon,\zeta}$ one finds after a short calculation that at $\varepsilon=0$ the slow flow is given by 
\begin{align}
\label{slowflow+eps0}
\tilde{A}^+_{0,\zeta}(\tau) = \begin{pmatrix}
\ast & \ast e_k^t\\
\ast e_k & \ast E_{kk}
\end{pmatrix}
\end{align}
where each $\ast$ stands for some (different) holomorphic scalar-valued function. From \eqref{slowflow+eps0} we see that the spaces
\begin{align*}
V & := \CC \times \lbrace 0 \rbrace^{k-1} \times \CC \times \lbrace 0 \rbrace^{n-k}\\
\tilde{U}^{ss}_0 & := \lbrace 0 \rbrace \times \CC^{k-1} \times \lbrace 0 \rbrace \times \lbrace 0 \rbrace^{n-k}\\
\tilde{S}^{ss}_0 & := \lbrace 0 \rbrace \times \lbrace 0 \rbrace \times \lbrace 0 \rbrace \times \CC^{n-k}
\end{align*}
are a splitting for $\CC^{n+1}$ and define disjoint invariant sets
\begin{align*}
J \times V, J \times \tilde{U}^{ss}_0, J \times \tilde{S}^{ss}_0
\end{align*}
of the slow equations \eqref{slowinner+}. By the asymptotics
\begin{align*}
\tilde{Y}^{+,s}(+ \infty) = \tilde{S}^{+,s}_0 = \tilde{S}^{+,sf}_0 \times \tilde{S}^{+,ss}_0
\end{align*}
the holomorphic bundle $\tilde{H}^{+,s}_0(\zeta) \cap V$ must be one dimensional, and
\begin{align*}
\tilde{S}^{ss}_0 \subset \tilde{H}^{+,s}_0(\zeta).
\end{align*}
Finally, let us consider the Grassmannian flow
\begin{align}
\label{slowfastflow+}
\left\lbrace \begin{aligned}
\dot{\tau} & = (1-\tau^2)h_\varepsilon(\tau)\\
\tilde{Y}^\prime & = \Gamma^{n-k+1} (\tilde{A}^{I,+}_{\varepsilon,\zeta}(\tau))\tilde{Y}
\end{aligned} \right. \;\; \textnormal{on} \;\; J \times \GG^{n+1}_{1}(\CC).
\end{align}
associated to \eqref{slowinner+}. Since the matrix $\tilde{A}^{I,+}_{0,\zeta}(\tau)$ at $\tau = 1$ is of diagonal form given in \eqref{diagslow+eps0} and there is a positive spectral gap between the eigenvalue $\tilde{\mu}^{+,S,sf}_\zeta$ and the other $n$ eigenvalues of $\tilde{A}^{I,+}_{0,\zeta}(1)$ we find by Lemma \ref{lemlocalGrass} that $\lbrace 1 \rbrace \times \tilde{S}^{+,sf}_0$ is a hyperbolic fixed point of \eqref{slowfastflow+} for all $\zeta$. At $\lbrace 1 \rbrace \times \tilde{S}^{+,sf}_0$ the $\tau$-direction is attracting and in $\lbrace 1 \rbrace \times \GG^{n+1}_1(\CC)$ the point $\lbrace 1 \rbrace \times \tilde{S}^{+,sf}_0$ is repelling. Applying the stable manifold theorem yields a unique holomorphic bundle
\begin{align*}
\tilde{H}^{+,sf}_\varepsilon: D_{r_0} \rightarrow \GG^{n+1}_1(\CC)
\end{align*}
depending smoothly on $\varepsilon$ such that the solution $(\tau,\tilde{Z}^{+,sf})$ of \eqref{slowfastflow+} starting at $(0,\tilde{H}^{+,sf}_\varepsilon(\zeta))$ satisfies
\begin{align*}
\tilde{Z}^{+,sf}(+\infty) = \tilde{S}^{+,sf}_0.
\end{align*}
In particular, by sharp asymptotics we find
\begin{align*}
\tilde{H}^{+,sf}_0(\zeta) = \tilde{H}^{+,s}_0(\zeta) \cap V.
\end{align*}
Embedding $\tilde{H}^{+,sf}_\varepsilon$ into the full phase space like $\tilde{H}^{+,s}_\varepsilon$ before yields the slow-fast bundle $H^{+,sf}_\varepsilon$ and finishes the proof of the lemma.
\end{proof}
From the slow equations and the decay properties in Lemma \ref{leminnnerslow} we can resolve the structure of the bundles $H^{\pm,s}_\varepsilon(\zeta)$ at $\varepsilon = 0$ by determining the slow-fast bundle $H^{\pm,sf}_0(\zeta)$:
\begin{lem}
\label{lemslowEvans}
Consider the Burgers' equation
\begin{align}
\label{Burgersequ}
v_t - (v^2)_x = v_{xx}\;\; (v \in \RR),
\end{align}
and let $\phi_0:\RR \rightarrow \RR$ be a standing shock profile of \eqref{Burgersequ} with
\begin{align*}
\phi_0^\prime = 1-\phi_0^2, \;\; \phi_0(\pm \infty) = \pm 1, \;\; \phi_0(0) = 0.
\end{align*}
Linearize \eqref{Burgersequ} at $\phi_0$ and choose
\begin{align*}
\tilde{\xi}^\prime = \begin{pmatrix}
-2 \phi_0 & 1 \\
\tilde{\zeta} & 0
\end{pmatrix} \tilde{\xi} \;\; (\tilde{\xi} \in \CC^2, \tilde{\zeta} \in \CH^+)
\end{align*}
as a first-order formulation for the equation of Laplace-modes of the resulting linearized equation. Let 
\begin{align*}
\tilde{h}^{\pm}: \CH^+\rightarrow \CC^2, \tilde{h}^\pm(\tilde{\zeta}) = (\tilde{h}^\pm_p(\tilde{\zeta}),\tilde{h}_q^\pm(\tilde{\zeta}))^t,
\end{align*}
be a holomorphic choice of basis for the Evans bundles $\tilde{\EH}^\pm(\tilde{\zeta}) = \SP \lbrace \tilde{h}^\pm(\tilde{\zeta}) \rbrace$ of $\phi_0$, extended onto $\lbrace 0 \rbrace$. Define an Evans function for $\phi_0$ by
\begin{align*}
\tilde{E}_0(\tilde{\zeta}) := \det (\tilde{h}^-(\tilde{\zeta}),\tilde{h}^+(\tilde{\zeta})).
\end{align*}
Then $\tilde{E}_0(\tilde{\zeta}) = 0$ iff $\tilde{\zeta} = 0$ and $\tilde{E}_0(0)^\prime \neq 0$. Setting $\tilde{\zeta}(\zeta) = a^{-2} (G_0)_{kk}\zeta$ it holds
\begin{align*}
H^{+,sf}_0(\zeta) & = \SP \left\lbrace \begin{pmatrix}
\tilde{h}^+_p(\tilde{\zeta}(\zeta))e_k\\
|a|\tilde{h}_q^+(\tilde{\zeta}(\zeta))\underline{G}_0e_k
\end{pmatrix} \right\rbrace\\
H^{-,sf}_0(\zeta) & = \SP \left\lbrace \begin{pmatrix}
\tilde{h}^-_p(\tilde{\zeta}(\zeta))e_k\\
|a| \tilde{h}_q^-(\tilde{\zeta}(\zeta))\underline{G}_0e_k
\end{pmatrix} \right\rbrace.
\end{align*}
\end{lem}
\begin{proof}
The statement on the spectral stability of the standing shock profile $\phi_0$ for Burgers' equation can be directly verified by using the explicit expressions for $\tilde{h}^\pm$ and $\tilde{E}_0$ in B.~Barker's master thesis \cite{BB09}.

For $\varepsilon = 0$ the slow equations are
\begin{align*}
\dot{\tau} & = (1-\tau^2) |a|\\
\dot{p}_k & = 2 a \tau p_k + q_k\\
\dot{q}_j & = \zeta (G_0)_{jk} p_k.
\end{align*}
Rescaling the independent variable with $|a|$, setting $\tilde{\zeta} = a^{-2} (G_0)_{kk}\zeta$ and changing basis via
\begin{align*}
\begin{pmatrix} p_k\\q \end{pmatrix} \mapsto \begin{pmatrix} 1 & 0\\0 & |a| \underline{G}_0 \end{pmatrix}^{-1}\begin{pmatrix} p_k\\q \end{pmatrix}
\end{align*}
yields the system
\begin{align*}
\dot{\tau} & = (1-\tau^2)\\
\dot{p}_k & = -2\tau p_k + e_k^T \underline{G}_0 q\\
\dot{q}_j & = \delta_{jk} \tilde{\zeta} p_k.
\end{align*}
The subsystem for $(\tau, p_k, q_k)$-variables is exactly the system for the standing Burgers' shock profile $\phi_0$ given in the statement of the lemma. With
\begin{align*}
\begin{pmatrix}
\tilde{h}^-_p(\tilde{\zeta})\\
\tilde{h}_q^-(\tilde{\zeta})\end{pmatrix} \in \CC^2\;\;\textnormal{and} \;\; \begin{pmatrix}
\tilde{h}^+_p(\tilde{\zeta})\\
\tilde{h}_q^+(\tilde{\zeta})
\end{pmatrix} \in \CC^2
\end{align*}
spanning the Evans bundles for $\phi_0$ we find by changing back to original variables that the dynamical parts of the slow bundles are
\begin{align*}
H^{-,sf}_0(\zeta) & = \SP\left\lbrace\begin{pmatrix}
\tilde{h}^-_p(\tilde{\zeta}(\zeta))e_k\\
|a|\tilde{h}_q^-(\tilde{\zeta}(\zeta))\underline{G}_0e_k
\end{pmatrix} \right\rbrace,\\
H^{+,sf}_0(\zeta) & =\SP\left\lbrace\begin{pmatrix}
\tilde{h}^+_p(\tilde{\zeta}(\zeta))e_k\\
|a|\tilde{h}_q^+(\tilde{\zeta}(\zeta))\underline{G}_0e_k
\end{pmatrix}\right\rbrace
\end{align*}
with $\tilde{\zeta}(\zeta) = a^{-2} (G_0)_{kk}\zeta$.
\end{proof}
%Just like in \cite{FS02} we get additional information on the slow bundles:
%\begin{cor}
%\label{corslowfastbundles}
%Let $r_0 >0$. Restrict \eqref{LapHsCLautoin} on the slow manifold $\mathcal{N}_{\varepsilon,\zeta}$, i.e.~
%\begin{align*}
%\dot{\tau}= (1-\tau^2) h_{\varepsilon}(\tau), \;\; \dot{\xi} = \varepsilon^{-1} \tilde{A}_{\varepsilon,\zeta}(\tau) \xi
%\end{align*}
%on $\mathcal{N}_{\varepsilon,\zeta}$, and consider the $1$-dimesional Grassmannian version of the restricted flow:
%\begin{align*}
%\dot{Z} = \varepsilon^{-1} \Gamma^1 \tilde{A}_{\varepsilon,\zeta}(\chi_\varepsilon) (Z), \;\; Z(0) \in \lbrace V \in \GG^{2n}_1(\CC): \;\; V \subset \mathcal{N}_{\varepsilon,\zeta}(0) \rbrace.
%\end{align*}
%Then there exist $\varepsilon_0=\varepsilon_0(r_0)$ and unique holomorphic bundles
%\begin{align*}
%H^{\pm,sf}_{\varepsilon}: D_{r_0} \rightarrow \GG^{2n}_1(\CC)
%\end{align*}
%depending smoothly on $\varepsilon \in [0,\varepsilon_0]$ such that the solutions $Z^{\pm}$ of
%\begin{align*}
%\dot{Z}^{\pm} = \varepsilon^{-1} \Gamma^1 \tilde{A}_{\varepsilon,\zeta}(\chi_\varepsilon) (Z^\pm), \;\; Z^\pm(0) = H^{\pm,sf}_{\varepsilon,\zeta}
%\end{align*}
%satisfy
%\begin{align*}
%Z^-(-\infty) = U^{-,sf}_{\varepsilon,\zeta} \;\; \textnormal{and} \;\; Z^+(+\infty) = S^{+,sf}_{\varepsilon,\zeta}.
%\end{align*}
%\end{cor}
The previous considerations yield:
\begin{cor}
\label{corregime1}
Let $r_0> 0$ and choose $\varepsilon_0 = \varepsilon_0(r_0)$ such that the statements of Lemma \ref{leminnerfast} and Lemma \ref{leminnnerslow} hold. For $\varepsilon \in [0,\varepsilon_0]$ define holomorphic bundles
\begin{align*}
H^\pm_\varepsilon := H^{\pm,f}_\varepsilon \oplus H^{\pm,s}_\varepsilon \;\; \textnormal{on} \;\; D_{r_0}.
\end{align*}
Then if $\varepsilon >0$ the bundles $H^\pm_\varepsilon$ are rescaled versions of the Evans bundles $\EH^\pm_\varepsilon$ of $\phi_\varepsilon$ via
\begin{align*}
\EH^{\pm}_\varepsilon(\varepsilon^2 \zeta) = H^\pm_\varepsilon(\zeta)
\end{align*}
with corresponding Evans functions
\begin{align*}
\det (\EH^-_\varepsilon(\kappa), \EH^+_\varepsilon(\kappa)) =: \mathcal{E}_\varepsilon(\kappa) = E_\varepsilon(\zeta) := \det (H^-_\varepsilon(\zeta), H^+_\varepsilon(\zeta))
\end{align*}
for $\kappa = \varepsilon^2 \zeta$. At $\varepsilon = 0$ there is $c \in \RR \backslash \lbrace 0 \rbrace$ such that
\begin{align*}
E_0(\zeta) = c \tilde{E}_0(\tilde{\zeta}(\zeta)).
\end{align*}
If $\varepsilon \rightarrow 0$ then
\begin{align*}
H^\pm_\varepsilon \rightarrow H^\pm_0 \;\; \textnormal{and} \;\; E_\varepsilon \rightarrow E_0
\end{align*}
as holomorphic functions on $D_{r_0}$. For $\varepsilon$ small enough the Evans function condition on $D_{r_0}$
\begin{align*}
E_\varepsilon(\zeta) = 0 \;\; \textnormal{for} \;\; \zeta \in D_{r_0} \;\; \textnormal{iff} \;\; \zeta = 0, \;\; \textnormal{and} \;\; E_\varepsilon^\prime(0) \neq 0
\end{align*}
holds.
\end{cor}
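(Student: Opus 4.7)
The plan is to first verify that $H^\pm_\varepsilon$ coincides, up to a coordinate similarity, with $\EH^\pm_\varepsilon(\varepsilon^2\zeta)$; then evaluate $E_0$ explicitly using Lemma~\ref{caplemma} and Lemma~\ref{lemslowEvans} to identify it with the Burgers Evans function $\tilde E_0$ up to a nonzero multiplicative constant; and finally combine this identification with Hurwitz's theorem and the fact that $\zeta=0$ is automatically a zero of $E_\varepsilon$ to conclude the Evans function condition.

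For the first step, recall that the scalings $\kappa=\varepsilon^2\zeta$ and $q=\varepsilon\tilde q$ yield $\tilde A_{\varepsilon,\zeta}(\tau)\sim A_{\varepsilon,\varepsilon^2\zeta}(\tau)$ via the similarity transformation $\diag(\ID_n,\varepsilon^{-1}\ID_n)$. Solutions $(p,\tilde q)$ of \eqref{LapHsCLautoin} that decay at $\pm\infty$ correspond, after undoing the rescaling of $q$, to decaying solutions of the original Laplace-mode equation \eqref{LapHsCL}. Since the fast bundles from Lemma~\ref{leminnerfast} and the slow bundles from Lemma~\ref{leminnnerslow}, together with Corollary~\ref{corslowfastbundles}, are built by tracking the stable/unstable Grassmannian evolution with initial data matching the subspaces at $\tau=\pm1$ described in Lemma~\ref{caplemma}, and since by a dimension count $H^{\pm,f}_\varepsilon$ and $H^{\pm,s}_\varepsilon$ are complementary, the $n$-dimensional sum $H^\pm_\varepsilon(\zeta)$ equals $\EH^\pm_\varepsilon(\varepsilon^2\zeta)$ modulo this similarity. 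Accordingly $E_\varepsilon$ and $\mathcal{E}_\varepsilon(\varepsilon^2\cdot)$ differ only by a $\zeta$-independent nonzero factor, which is immaterial for the zero structure.

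For the identification of $E_0$, read off the bases at $\varepsilon=0$ from Lemma~\ref{caplemma}(iii) and Lemma~\ref{lemslowEvans}: on the $+$ side the bundle $H^+_0(\zeta)$ is spanned by $\{\tilde e_j\}_{j<k}$ (fast), the slow-fast vector $(\tilde h^+_p(\tilde\zeta)\,e_k,\ \gamma_0|a|\tilde h^+_q(\tilde\zeta)\,\underline{Dg_0}\,e_k)$, and the super-slow vectors $\{(0,\underline{Dg_0}\,r_j)\}_{j>k}$; on the $-$ side, correspondingly with $j>k$ in the fast block and $j<k$ in the super-slow block. Expanding the resulting $2n\times 2n$ Wronskian along the rows and columns carrying the fast block $\{e_j\}_{j\neq k}$ and the super-slow block $\{\underline{Dg_0}\,r_j\}_{j\neq k}$ reduces $E_0(\zeta)$ to
\begin{align*}
E_0(\zeta)\;=\;c\cdot\det\!\begin{pmatrix}\tilde h^-_p(\tilde\zeta(\zeta)) & \tilde h^+_p(\tilde\zeta(\zeta))\\ \tilde h^-_q(\tilde\zeta(\zeta)) & \tilde h^+_q(\tilde\zeta(\zeta))\end{pmatrix}\;=\;c\,\tilde E_0(\tilde\zeta(\zeta))
\end{align*}
where $c\in\RR\setminus\{0\}$ is built from $\gamma_0|a|$ and from the Gram-type determinant of the $\{r_j\}_{j\neq k}$-block, which is nonzero thanks to the bi-orthogonality established in Lemma~\ref{assupmlem1}.

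Smooth convergence $H^\pm_\varepsilon\to H^\pm_0$ and $E_\varepsilon\to E_0$ uniformly on $D_{r_0}$ is immediate from the smoothness in $\varepsilon$ asserted in Lemma~\ref{leminnerfast}, Lemma~\ref{leminnnerslow}, and Corollary~\ref{corslowfastbundles}, since $E_\varepsilon$ is a continuous functional of holomorphic choices of bases. To finish the Evans function condition, note that for every $\varepsilon>0$ the profile derivative satisfies $\phi_\varepsilon''=Df_\varepsilon(\phi_\varepsilon)\phi_\varepsilon'$ by differentiating the profile equation, so $(\phi_\varepsilon'(0),0)$ is a nonzero element of $H^+_\varepsilon(0)\cap H^-_\varepsilon(0)$ and hence $E_\varepsilon(0)=0$. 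Since $\tilde E_0$ has a simple zero at $\tilde\zeta=0$ and no others in $\CH^+$, $E_0$ inherits this property on $D_{r_0}$. Hurwitz's theorem applied to $E_\varepsilon\to E_0$ then forces $E_\varepsilon$ to have, for all sufficiently small $\varepsilon$, exactly one zero in $D_{r_0}$, which must therefore be the guaranteed zero at $\zeta=0$, and it must be simple. I expect the main obstacle to be the bookkeeping in the Wronskian identification of step two: one must track carefully how the $\mathcal{W}_\varepsilon$-rescaling used in the proof of Lemma~\ref{caplemma} interacts with the $q=\varepsilon\tilde q$ scaling in order to see that the extracted constant $c$ is indeed independent of $\zeta$ and non-vanishing.
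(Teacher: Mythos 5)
Your proposal is correct and follows essentially the same route as the paper's proof: compute $E_0$ from the explicit $\varepsilon=0$ bases in Lemma~\ref{caplemma}(iii) and Lemma~\ref{lemslowEvans} to identify it with $c\,\tilde E_0(\tilde\zeta(\zeta))$, use the profile derivative $(\phi_\varepsilon',0)$ at $\kappa=0$ to force $E_\varepsilon(0)=0$, and conclude the Evans function condition by convergence. The only minor difference is that the paper spells out why $(\phi_\varepsilon'(0),0)$ lands in the slow-fast pieces $H^{\pm,sf}_\varepsilon(0)$ rather than the fast bundles (via $\varepsilon$-closeness of $U^{-,sf}_{\varepsilon,0}$, $S^{+,sf}_{\varepsilon,0}$ to $\SP\{(e_k,0)\}$ and transversality of $H^{\pm,f}_{\varepsilon,0}$), whereas you jump directly to the conclusion that it lies in the intersection; and the paper leaves the Hurwitz step implicit while you name it explicitly.
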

\begin{proof}
Using
\begin{align*}
H^{-,f}_0(\zeta) & = \lbrace 0 \rbrace^k \times \CC^{n-k} \times \lbrace 0 \rbrace^n,\\
H^{+,f}_0(\zeta) &= \CC^{k-1} \times \lbrace 0 \rbrace^{n-k+1} \times \lbrace 0 \rbrace^{n},
\end{align*}
the form of $H^{\pm,s}_0(\zeta)$ in Lemma \ref{lemslowEvans} and the basis of $U^{ss}_0$ and $S^{ss}_0$ in Lemma \ref{caplemma} one finds
\begin{align*}
E_0(\zeta) & = \tilde{c}|a| \det \begin{pmatrix} \tilde{h}^-_p(\tilde{\zeta}(\zeta)) & \tilde{h}^+_p(\tilde{\zeta}(\zeta)) & 0\\
\tilde{h}^-_q(\tilde{\zeta}(\zeta)) & \tilde{h}^+_q(\tilde{\zeta}(\zeta)) & 0\\
\ast & \ast & C^{ss}\end{pmatrix}\\
& = \tilde{c}|a| \det \begin{pmatrix} \tilde{h}^-_p(\tilde{\zeta}(\zeta)) & \tilde{h}^+_p(\tilde{\zeta}(\zeta))\\
\tilde{h}^-_q(\tilde{\zeta}(\zeta)) & \tilde{h}^+_q(\tilde{\zeta}(\zeta))\end{pmatrix} \det C^{ss}\\
& = c \tilde{E}_0(\tilde{\zeta}(\zeta))
\end{align*}
where $\tilde{c} \in \lbrace -1,1 \rbrace$ and $C^{ss} \in \textnormal{GL}_{n-1}(\CC)$ consists of the $n-1$ non-zero rows of $(\underline{G}_0 r_j)_{j\neq k} \in \CC^{n \times (n-1)}$, namely all but the $k$-th row.\\
Let $\varepsilon>0$. It remains to show that $E_\varepsilon(0) = 0$. If $\zeta = 0$ then system \eqref{LapHsCLautoin} is given by
\begin{align}
\label{LapHsCLautoineps0}
\left\lbrace \begin{aligned}
\tau^\prime & = \varepsilon (1-\tau^2)h_\varepsilon(\tau)\\
\begin{pmatrix}
p\\ q
\end{pmatrix}^\prime & = \begin{pmatrix}
F_\varepsilon(\tau) & \varepsilon \ID_n \\ 0 & 0
\end{pmatrix}\begin{pmatrix}
p\\ q
\end{pmatrix}.
\end{aligned} \right.
\end{align}
The set $J \times \lbrace q=0 \rbrace$ is invariant for \eqref{LapHsCLautoineps0}. Furthermore, since $A^I_{\varepsilon,0}(\tau) \CC^{2n} \subset \lbrace q = 0 \rbrace$ we find that $U^{-,sf}_{\varepsilon,0}$ resp.~$S^{+,sf}_{\varepsilon,0}$ is a subset of $\lbrace q = 0 \rbrace$ with an unique counter part $\tilde{U}^{-,sf}_{\varepsilon,0}$ resp.~$\tilde{S}^{+,sf}_{\varepsilon,0}$ in $\CC^n$. The counter part is the invariant subspace of $F_\varepsilon(-1)$ resp.~$F_\varepsilon(+1)$ associated to the slow-fast eigenvalue $\mu^{-,U,sf}_{\varepsilon,0}$ resp.~$\mu^{+,S,sf}_{\varepsilon,0}$. On $J \times \lbrace q=0 \rbrace$ \eqref{LapHsCLautoineps0} reduces to
\begin{align}
\label{LapHsCLautoinzeta0}
\left\lbrace \begin{aligned}
\tau^\prime & = \varepsilon (1-\tau^2)h_\varepsilon(\tau)\\
p^\prime & = F_\varepsilon(\tau) p.
\end{aligned} \right.
\end{align}
Consider the Grassmannian flow
\begin{align}
\label{slowfastflowzeta0}
\left\lbrace \begin{aligned}
\tau^\prime & = \varepsilon(1-\tau^2)h_\varepsilon(\tau)\\
X^\prime & = \Gamma^{1} (F_\varepsilon(\tau))X
\end{aligned} \right. \;\; \textnormal{on} \;\; J \times \GG^{n}_{1}(\CC),
\end{align}
and note that by $F_0(\tau) = \diag(\lambda_1,\ldots,\lambda_n)$ and Lemma \ref{lemlocalGrass} the manifold
\begin{align*}
\mathcal{M}_0 := J \times \lbrace \SP \lbrace e_k \rbrace \rbrace
\end{align*}
is a one dimensional normally hyperbolic critical manifold for \eqref{slowfastflowzeta0}. It gives rise to a slow manifold $\mathcal{M}_\varepsilon$. Because
\begin{align*}
\tilde{U}^{-,sf}_{0,0} = \SP \lbrace e_k \rbrace = \tilde{S}^{+,sf}_{0,0}
\end{align*}
the fixed points $(-1,\tilde{U}^{-,sf}_{\varepsilon,0})$ and $(+1,\tilde{S}^{+,sf}_{\varepsilon,0})$ of \eqref{slowfastflowzeta0} are elements of $\mathcal{M}_\varepsilon$. For $\varepsilon > 0$ there is a point $(0,\tilde{H}^{sf}_\varepsilon) \in \mathcal{M}_\varepsilon$ such that the solution $(\tau,X)$ of \eqref{slowfastflowzeta0} starting in $(0,\tilde{H}^{sf}_\varepsilon)$ satisfies
\begin{align*}
X(-\infty) = \tilde{U}^{-,sf}_{\varepsilon,0} \;\; \textnormal{and} \;\; X(+\infty) = \tilde{S}^{+,sf}_{\varepsilon,0}.
\end{align*}
Returning to full phase space we find by Lemma \ref{leminnnerslow} (ii) and sharp asymptotics that
\begin{align*}
H^{-,sf}_{\varepsilon}(0) = \tilde{H}^{sf}_\varepsilon \times \lbrace 0 \rbrace^n = H^{+,sf}_{\varepsilon}(0),
\end{align*}
i.e.~ $H^-_\varepsilon(0) \cap H^+_\varepsilon(0) \neq \lbrace 0 \rbrace$ and
\begin{align*}
E_\varepsilon(0) = 0.
\end{align*}
\end{proof}
%\begin{rem}
%\label{reminnerholo}
%Note that for $r_{00}, \varepsilon_0>0$ small enough the statements of this subsection, with one exception, also hold on $B_{r_{00}}(0) \subset \CC$ uniformly in $\varepsilon \ll 1$ since the constructions presented here carry over to a small neighbourhood of zero yielding respective objects that are smooth in $\varepsilon$ and holomorphic in $\zeta$ on $[0,\varepsilon_0] \times B_{r_{00}}(0)$. In particular, for any $r_0>0$ we can make rigorous sense of $H^\pm_\varepsilon$ and $E_\varepsilon$ as holomorphic functions on $D_{r_0}$. Only Remark \ref{remsplittinginner} fails to hold outside the domain of consistent splitting $\Lambda_\varepsilon$ since eigenvalues that lie on a dispersion curve are no longer stable or unstable eigenvalues.
%\end{rem}
\subsection{Outer regime}
Set for $\kappa \in \CC$ and $\varepsilon \in [0,\varepsilon_0]$ (c.f.~\cite{FS10})
\begin{align*}
\beta^2 = |\kappa|, \varepsilon = \alpha \beta \;\; \textnormal{and} \;\; \tilde{\kappa} = \frac{\kappa}{|\kappa|} \;\; \textnormal{if} \;\; \kappa \neq 0.
\end{align*}
Then finding appropriate $r_0,r_1> 0$ and $\varepsilon_0 > 0$ for the outer regime $r_0 \varepsilon^2 \leq |\kappa|\leq r_1$ translates to finding $r_1, r_0^{-1}$ small enough:
\begin{align*}
\beta & \leq r_1^{1/2}\\
\alpha & \leq r_0^{-1/2}.
\end{align*}
We consider
\begin{align*}
A^{II}_{\alpha,\beta,\tilde{\kappa}}(\tau) := \begin{pmatrix}
F_{\alpha \beta}(\tau) & \beta \ID_n\\
\beta^3 \tilde{\kappa}^2 \ID_n + \beta \tilde{\kappa} G_{\alpha \beta}(\tau) & 0
\end{pmatrix}
\end{align*}
for $\alpha, \beta > 0$ and
\begin{align*}
\tilde{\kappa} \in \partial D_1^+ := \lbrace \kappa \in \CH^+: \; |\kappa| = 1 \rbrace.
\end{align*}
If $\beta > 0$ then
\begin{align*}
A^{II}_{\alpha,\beta,\tilde{\kappa}}(\tau) \sim A_{\alpha \beta, \beta^2 \tilde{\kappa}}(\tau).
\end{align*}
The task is to find $\alpha_0, \beta_0>0$ small enough such that for all
\begin{align*}
(\alpha,\beta,\tilde{\kappa}) \in [0,\alpha_0] \times (0,\beta_0] \times \partial D_1^+
\end{align*}
the non-autonomous system
\begin{align}
\label{outernonauto}
\xi^\prime = A^{II}_{\alpha,\beta,\tilde{\kappa}}(\chi_{\alpha \beta}) \xi \;\; (\xi \in \CC^{2n})
\end{align}
has Evans bundles with trivial intersection. First we make an observation on the spectrum of $A^{II}_{\alpha,\beta,\tilde{\kappa}}(\tau)$.
\begin{lem}
\label{lemspecouter}
There are $\alpha_0, \beta_0>0$ such that for all
\begin{align*}
(\alpha,\beta,\tilde{\kappa},\tau) \in [0,\alpha_0] \times (0,\beta_0] \times \partial D_1^+ \times J
\end{align*}
the stable subspace $S_{\alpha,\beta,\tilde{\kappa}}(\tau)$ and the unstable subspace $U_{\alpha,\beta,\tilde{\kappa}}(\tau)$ of $A^{II}_{\alpha,\beta,\tilde{\kappa}}(\tau)$ satisfy
\begin{align*}
\dim S_{\alpha,\beta,\tilde{\kappa}}(\tau) = n = \dim U_{\alpha,\beta,\tilde{\kappa}}(\tau).
\end{align*}
In particular, $i \RR \cap \sigma(A^{II}_{\alpha,\beta,\tilde{\kappa}}(\tau)) = \emptyset$.
\end{lem}
\begin{proof}
Let $(\alpha,\beta,\tilde{\kappa},\tau) \in [0,\alpha_0] \times (0,\beta_0] \times \partial D_1^+ \times J$ with $\alpha_0 \beta_0(=\varepsilon_0)$ small enough. Because $F_{\alpha \beta}(\tau),G_{\alpha \beta}(\tau)$ satisfy the assumptions of Lemma \ref{lemdisperion} there exists a $\nu > 0$ such that $\tilde{\Lambda} := M_\nu$ is a domain of constant splitting for $\lbrace A_{F_{\alpha \beta}(\tau), G_{\alpha \beta}(\tau)}(\kappa) \rbrace_\kappa$ with
\begin{align}
\label{tausplitting}
\CH^+\backslash \lbrace 0 \rbrace \subset \tilde{\Lambda}.
\end{align}
Because $\beta >0$ we have $A^{II}_{\alpha,\beta,\tilde{\kappa}}(\tau) \sim A_{\alpha \beta,\beta^2 \tilde{\kappa}}(\tau)= A_{F_{\alpha \beta}(\tau), G_{\alpha \beta}(\tau)}(\beta^2\tilde{\kappa})$ and $\beta \tilde{\kappa} \in \CH^+\backslash \lbrace 0 \rbrace$, thus the claim follows from \eqref{tausplitting}.
\end{proof}
With the above lemma in mind we can hope to apply Lemma \ref{lemma6a}. To this end we need slowly varying block-diagonalizers for $A^{II}_{\alpha,\beta,\tilde{\kappa}}(\tau)$ splitting the latter into a contracting and an expanding part of dimension $n$.
\begin{lem}
\label{lemdiagouter}
There exist $\alpha_0,\beta_0 >0$ and smooth functions
\begin{align*}
R: [0,\alpha_0] \times [0,\beta_0] \times \partial D_1^+ \times J & \rightarrow \textnormal{GL}_{2n}(\CC),\\
A^>: [0,\alpha_0] \times [0,\beta_0] \times \partial D_1^+ \times J & \rightarrow \CC^n\\
A^<: [0,\alpha_0] \times [0,\beta_0] \times \partial D_1^+ \times J & \rightarrow \CC^n,
\end{align*}
such that for all $(\alpha,\beta,\tilde{\kappa},\tau) \in [0,\alpha_0] \times [0,\beta_0] \times \partial D_1^+ \times J$ it holds
\begin{align*}
R^{-1}_{\alpha,\beta,\tilde{\kappa}}(\tau)A^{II}_{\alpha,\beta,\tilde{\kappa}}(\tau)R_{\alpha,\beta,\tilde{\kappa}}(\tau) & = \begin{pmatrix}
A^>_{\alpha,\beta,\tilde{\kappa}}(\tau) & 0\\ 0 & A^<_{\alpha,\beta,\tilde{\kappa}}(\tau)
\end{pmatrix},
\end{align*}
and if $\beta > 0$ then
\begin{align*}
\textnormal{Re}\, A^<_{\alpha,\beta,\tilde{\kappa}}(\tau) < 0 < \textnormal{Re}\, A^>_{\alpha,\beta,\tilde{\kappa}}(\tau).
\end{align*}
Furthermore, there is a constant such that
\begin{align*}
|R_{\alpha,\beta,\tilde{\kappa}}(0)^{-1}| & \leq c\\
|\partial_\tau R_{\alpha,\beta,\tilde{\kappa}}(\tau)| & \leq c(\alpha + \beta).
\end{align*}
%for $R_{0,0,\tilde{\kappa}} := R_{\tilde{\kappa}}(0)$ we have
%\begin{align*}
%R_{\alpha,\beta,\tilde{\kappa}}(\tau) = R_{\tilde{\kappa}} + \mathcal{O}(\alpha + \beta).
%\end{align*}
\end{lem}
\begin{proof}
We proceed in a similar fashion as in the proof of Lemma \ref{caplemma}. Let $r>0$ such that $u_\varepsilon(\tau) \in B_r(0)$ for all $\varepsilon \in [0,\varepsilon_0]$, and, if necessary shrink $\varepsilon_0$ such that the statement of Lemma \ref{lemdiagf} holds. Then we may assume that with the notation of the latter lemma
\begin{align*}
F_{\alpha\beta}(\tau) = \begin{pmatrix}
M^<_{\alpha \beta}(\tau) & 0 & 0\\ 0 & M^0_{\alpha \beta}(\tau) & 0 \\ 0 & 0 & M^>_{\alpha \beta}(\tau)
\end{pmatrix}
\end{align*}
for
\begin{align*}
M^<_{\alpha \beta}(\tau) & = M^<(\alpha \beta,\alpha \beta u_{\alpha \beta}(\tau)) \\
M^0_{\alpha \beta}(\tau) & = M^0(\alpha \beta,\alpha \beta u_{\alpha \beta}(\tau))\\
M^>_{\alpha \beta}(\tau) & =M^>(\alpha \beta,\alpha \beta u_{\alpha \beta}(\tau)).
\end{align*}
For $\beta = 0$ we find that
\begin{align*}
A^{II}_{\alpha,0,\tilde{\kappa}}(\tau) = \begin{pmatrix}
\diag(\lambda_1,\ldots,\lambda_n) & 0\\ 0 & 0
\end{pmatrix}
\end{align*}
is independent of $(\alpha,\tilde{\kappa},\tau)$ and of diagonal form. Hence for small $\beta$ the total projections
\begin{align*}
P^{S,f}_{\alpha,\beta,\tilde{\kappa}}(\tau), P^s_{\alpha,\beta,\tilde{\kappa}}(\tau) \; \textnormal{and}\; P^{U,f}_{\alpha,\beta,\tilde{\kappa}}(\tau)
\end{align*}
onto the eigenvalues with real part (strongly) smaller, close-to and (strongly) larger than zero are well-defined and smooth in their parameters. For $\beta = 0$ they have constant values
\begin{align*}
P^{S,f}_0:= P^{S,f}_{\alpha,0,\tilde{\kappa}}(\tau) & = \begin{pmatrix}
\Pi^< & 0\\0 & 0
\end{pmatrix}\\
P^s_0:=P^s_{\alpha,0,\tilde{\kappa}}(\tau) & = \begin{pmatrix}
E_{kk} & 0\\0 & \ID_n
\end{pmatrix}\\
P^{U,f}_0:=P^{U,f}_{\alpha,0,\tilde{\kappa}}(\tau)& = \begin{pmatrix}
\Pi^> & 0\\0 & 0
\end{pmatrix}.
\end{align*}
We apply Kato's reduction process \cite{K95} to the $0$-group. A Taylor expansion shows that
\begin{align*}
A^{II,s}_{\alpha,\beta,\tilde{\kappa}}(\tau) := \frac{1}{\beta} A^{II}_{\alpha,\beta,\tilde{\kappa}}(\tau) P^s_{\alpha,\beta,\tilde{\kappa}}(\tau)
\end{align*}
defines a smooth function with
\begin{align*}
A^{II,s}_{\alpha,0,\tilde{\kappa}}(\tau) = P^s_0 \partial_\beta A^{II}_{\alpha,0,\tilde{\kappa}}(\tau) P^s_0.
\end{align*}
From Lemma \ref{lemdiagf} and $u_\varepsilon(\tau) = \varepsilon \tau e_k + \mathcal{O}(\varepsilon^2)$ one finds $\partial_\beta M^0_{0}(\tau) = 2\alpha a \tau$ and hence
\begin{align*}
A^{II,s}_{\alpha,0,\tilde{\kappa}}(\tau) = \begin{pmatrix}
2 \alpha a \tau E_{kk} & E_{kk} \\ \tilde{\kappa} G_0 E_{kk} & 0
\end{pmatrix}.
\end{align*}
For small $\alpha>0$ and all $(\tilde{\kappa},\tau)$ the matrix $A^{II,s}_{\alpha,0,\tilde{\kappa}}(\tau)$ has two non-zero simple eigenvalues
\begin{align*}
\tilde{\mu}^{U/S,sf}_{\alpha,\tilde{\kappa}}(\tau) = \alpha a \tau \pm \sqrt{(\alpha a \tau)^2 + \tilde{\kappa}(G_0)_{kk}}
\end{align*}
Hence, for small $\alpha, \beta > 0$ there is a simple unstable and a simple stable slow-fast eigenvalues $\mu^{U/S,sf}_{\alpha,\beta,\tilde{\kappa}}(\tau)$ of $A^{II}_{\alpha,\beta,\tilde{\kappa}}(\tau)$ with
\begin{align*}
\mu^{U/S,sf}_{\alpha,\beta,\tilde{\kappa}}(\tau) = \beta\left(\tilde{\mu}^{U/S,sf}_{\alpha,\tilde{\kappa}}(\tau) + \OO(\beta)\right)
\end{align*}
and smooth one dimensional eigenprojections $P^{U/S,sf}_{\alpha,\beta,\tilde{\kappa}}(\tau)$. For $\alpha,\beta >0$ small enough we have for all $(\tilde{\kappa},\tau)$
\begin{align*}
\textnormal{Re}\, \mu^{U/S,sf}_{\alpha,\beta,\tilde{\kappa}}(\tau) =\beta (\pm \textnormal{Re}\, \sqrt{\tilde{\kappa} (G_0)_{kk}} + \mathcal{O}(\alpha +\beta) )\gtrless 0.
\end{align*}
Let $P^{ss}_{\alpha,\beta,\tilde{\kappa}}(\tau)$ be the total projection onto the group of eigenvalues of $A^{II,s}_{\alpha,\beta,\tilde{\kappa}}(\tau)$ associated to the $2(n-1)$-fold semi-simple zero eigenvalue of $A^{II,s}_{\alpha,0,\tilde{\kappa}}(\tau)$. Set $\Delta^{sf}_{\alpha,\tilde{\kappa}} = \tilde{\mu}^{S,sf}_{\alpha,\tilde{\kappa}}-\tilde{\mu}^{U,sf}_{\alpha,\tilde{\kappa}}$ and remember
\begin{align*}
\tilde{R} = \begin{pmatrix}
(r_j)_{j<k},0,(r_j)_{j>k}
\end{pmatrix} \in \RR^{n \times n}.
\end{align*}
The matrices
\begin{align*}
R_{\alpha,\tilde{\kappa}}(\tau) & = \begin{pmatrix}
\ID_n & E_{kk}\\ -\frac{\tilde{\mu}^{S,sf}}{(G_0)_{kk}} G_0 E_{kk} &  G_0\left(\tilde{R} -\frac{\tilde{\mu}^{U,sf}}{(G_0)_{kk}} E_{kk}\right)
\end{pmatrix}\\
R_{\alpha,\tilde{\kappa}}(\tau)^{-1} & = \begin{pmatrix}
\ID_n - \frac{\tilde{\mu}^{S,sf}}{\Delta^{sf}}E_{kk} & - \frac{1}{\Delta^{sf}} E_{kk}\\ \frac{\tilde{\mu}^{S,sf}}{\Delta^{sf}}E_{kk} & \tilde{R}^t+\frac{1}{\Delta^{sf}} E_{kk}
\end{pmatrix}
\end{align*}
diagonalize $A^{II,s}_{\alpha,0,\tilde{\kappa}}(\tau)$ to
\begin{align*}
R_{\alpha,\tilde{\kappa}}(\tau)^{-1}A^{II,s}_{\alpha,0,\tilde{\kappa}}(\tau) R_{\alpha,\tilde{\kappa}}(\tau) = \begin{pmatrix}
\tilde{\mu}^{U,sf}_{\alpha,\tilde{\kappa}}(\tau)E_{kk} & 0 \\ 0 & \tilde{\mu}^{S,sf}_{\alpha,\tilde{\kappa}}(\tau) E_{kk}
\end{pmatrix}.
\end{align*}
With the help of the above diagonalizers we find
\begin{align*}
P^{U,sf}_{\alpha,\tilde{\kappa}}(\tau) := &P^{U,sf}_{\alpha,0,\tilde{\kappa}}(\tau)\\
 = & R_{\alpha,\tilde{\kappa}}(\tau) \begin{pmatrix}
E_{kk} & 0 \\0 & 0
\end{pmatrix} R_{\alpha,\tilde{\kappa}}(\tau)^{-1},\\
%= & \begin{pmatrix}
%-\frac{\mu^{U,sf}}{\Delta}E_{kk} & - \frac{1}{\Delta} E_{kk}\\
%\frac{\mu^{S,sf} \mu^{U,sf}}{\Delta (G_0)_{kk}} G_0 E_{kk} & \frac{\mu^{S,sf}}{\Delta (G_0)_{kk}} G_0 E_{kk}
%\end{pmatrix},\\
P^{S,sf}_{\alpha,\tilde{\kappa}}(\tau) := &P^{S,sf}_{\alpha,0,\tilde{\kappa}}(\tau)\\
 = & R_{\alpha,\tilde{\kappa}}(\tau) \begin{pmatrix}
0 & 0 \\0 & E_{kk}
\end{pmatrix} R_{\alpha,\tilde{\kappa}}(\tau)^{-1},\\
%= & \begin{pmatrix}
%\frac{\mu^{S,sf}}{\Delta}E_{kk} & \frac{1}{\Delta} E_{kk}\\
%- \frac{\mu^{S,sf} \mu^{U,sf}}{\Delta (G_0)_{kk}} G_0 E_{kk} & -\frac{\mu^{U,sf}}{\Delta (G_0)_{kk}} G_0 E_{kk}
%\end{pmatrix},\\
P^{ss}_0 := &P^{ss}_{\alpha,0,\tilde{\kappa}}(\tau)\\
= & R_{\alpha,\tilde{\kappa}}(\tau) \begin{pmatrix}
\ID_n - E_{kk} & 0 \\0 & \ID_n - E_{kk}
\end{pmatrix} R_{\alpha,\tilde{\kappa}}(\tau)^{-1}\\
= & \begin{pmatrix}
\ID_n - E_{kk} & 0\\ 0 & p^{ss}
\end{pmatrix},
\end{align*}
where $p^{ss} = \ID_n - \underline{G}_0 E_{kk}$. Carrying on with the reduction process define the smooth function
\begin{align*}
A^{II,ss}_{\alpha,\beta,\tilde{\kappa}}(\tau) = \frac{1}{\beta}P^{ss}_{\alpha,\beta,\tilde{\kappa}}(\tau)A^{II,s}_{\alpha,\beta,\tilde{\kappa}}(\tau)P^{ss}_{\alpha,\beta,\tilde{\kappa}}(\tau).
\end{align*}
With
\begin{align*}
\tilde{F}^{-1}_0 =  \begin{pmatrix}
(M^<_0(0))^{-1} & 0 & 0\\ 0&0&0 \\ 0& 0 & (M^<_0(0))^{-1}
\end{pmatrix}
\end{align*}
it holds
\begin{align*}
A^{II,ss}_{\alpha,0,\tilde{\kappa}}(\tau) = \frac{1}{2}\begin{pmatrix}
0 & 0\\ 0 & - \tilde{\kappa} p^{ss} G_0 \tilde{F}^{-1}_0 p^{ss}
\end{pmatrix}
\end{align*}
where we have used
\begin{align*}
\partial_\beta P^s_{\alpha,0,\tilde{\kappa}}(\tau) = \begin{pmatrix}
0 & -\tilde{F}^{-1}_0\\ - \tilde{\kappa} G_0 & 0
\end{pmatrix}.
\end{align*}
Like in the previous treatment of the inner regime we have
\begin{align*}
(\tilde{R}^t+E_{kk}) p^{ss} G_0 \tilde{F}^{-1}_0 p^{ss} G_0(\tilde{R} + &(G_0)_{kk}^{-1} E_{kk})\\
& = \diag(\mu_1^{-1},\ldots,\mu_{k-1}^{-1},0,\mu_{k+1}^{-1},\ldots,\mu_n^{-1}),
\end{align*}
and the non-zero and simple eigenvalues of $A^{II,ss}_{\alpha,0,\tilde{\kappa}}(\tau)$ are
\begin{align*}
\lbrace - \tilde{\kappa} (2\mu_j)^{-1} \rbrace_{j \neq k}.
\end{align*}
Hence there are $n-1$ simple eigenvalues of $A^{II}$ expanding as
\begin{align*}
\mu^{U,ss}_{j;\alpha,\beta,\tilde{\kappa}}(\tau) & = \beta^2(- \tilde{\kappa} (2\mu_j)^{-1} + \mathcal{O}(\beta)) \;\; (j < k)\\
\mu^{S,ss}_{j;\alpha,\beta,\tilde{\kappa}}(\tau) & = \beta^2(- \tilde{\kappa} (2\mu_j)^{-1} + \mathcal{O}(\beta)) \;\; (j > k)
\end{align*}
with smooth eigenprojections $P^{S/U;ss}_{j;\alpha,\beta,\tilde{\kappa}}(\tau)$. For small $\beta >0$ the inequalities
\begin{align*}
\textnormal{Re}\, \mu^{S,ss}_{j;\alpha,\beta,1}(\tau)< 0 < \textnormal{Re}\, \mu^{U,ss}_{j;\alpha,\beta,1}(\tau)
\end{align*}
hold. Furthermore the super-slow projections are given for $\beta = 0$ by
\begin{align*}
P^{S/U;ss}_{j}:=P^{S/U;ss}_{j;\alpha,0,\tilde{\kappa}}(\tau) = R_{\alpha,\tilde{\kappa}}(\tau) \begin{pmatrix}
0 & 0\\ 0 & E_{jj}
\end{pmatrix} R_{\alpha,\tilde{\kappa}}(\tau)^{-1} = \begin{pmatrix}
0 & 0\\ 0 & G_0 \tilde{R} E_{jj} \tilde{R}^t
\end{pmatrix}.
\end{align*}
Set
\begin{align*}
Q = &\partial_\beta P^{S,f}P^{S,f} + \partial_\beta P^{U,f}P^{U,f} + \partial_\beta P^{S,sf}P^{S,sf} + \partial_\beta P^{U,sf}P^{U,sf}\\
& + \sum_{j>k} \partial_\beta P^{S,ss}_jP^{S,ss}_j + \sum_{j<k} \partial_\beta P^{U,ss}_jP^{U,ss}_j,
\end{align*}
and for $(\alpha,\tilde{\kappa},\tau)$ let $\DS_{\alpha,\beta,\tilde{\kappa}}(\tau)$ solve
\begin{align*}
\partial_\beta \DS_{\alpha,\beta,\tilde{\kappa}}(\tau) = Q_{\alpha,\beta,\tilde{\kappa}}(\tau) \DS_{\alpha,\beta,\tilde{\kappa}}(\tau), \;\; \DS_{\alpha,0,\tilde{\kappa}}(\tau) = \ID_{2n}.
\end{align*}
Define
\begin{align*}
R_{\alpha,\beta,\tilde{\kappa}}(\tau) := \DS_{\alpha,\beta,\tilde{\kappa}}(\tau) R_{\alpha,\tilde{\kappa}}(\tau)
\end{align*}
and note that for all $(\tilde{\kappa},\tau)$
\begin{align*}
R_{0,0,\tilde{\kappa}}(\tau) = R_{0,\tilde{\kappa}}(\tau) = R_{\tilde{\kappa}} := R_{0,\tilde{\kappa}}(0),
\end{align*}
which implies the claimed properties of $R$ by smoothness of $R$, the compactness of the parameter domain and the identity
\begin{align*}
R_{\alpha,\beta,\tilde{\kappa}}(\tau) = R_{\tilde{\kappa}} + \int_0^1 \nabla_{\alpha,\beta} R_{y \alpha,y \beta,\tilde{\kappa}}(\tau)dy \begin{pmatrix}
\alpha\\ \beta
\end{pmatrix}.
\end{align*}

After taking care of rearrangements by multiplication of $R$ with an elementary matrix, and naming the resulting product $R$ again, we find that
\begin{align*}
R^{-1} A^{II} R = \begin{pmatrix}
A^> & 0\\
0 & A^<
\end{pmatrix}
\end{align*}
where
\begin{align*}
A^> &= \begin{pmatrix}
A^{>,f} & 0\\
0 & \diag(\mu^{U,ss}_{1},\ldots,\mu^{U,ss}_{k-1},\mu^{U,sf})
\end{pmatrix},\\
A^< & = \begin{pmatrix}
A^{<,f} & 0\\
0 & \diag(\mu^{S,sf}, \mu^{S,ss}_{k+1},\ldots,\mu^{S,ss}_{n})
\end{pmatrix}
\end{align*}
with $A^{>,f} = A^{>,f}_{\alpha,\beta,\tilde{\kappa}}(\tau) \in \CC^{(n-k) \times (n-k)}$ and $A^{<,f} = A^{<,f}_{\alpha,\beta,\tilde{\kappa}}(\tau) \in \CC^{(k-1) \times (k-1)}$ satisfying
\begin{align*}
A^{>,f}_{\alpha,\beta,\tilde{\kappa}}(\tau) & = \diag(\lambda_{k+1},\ldots,\lambda_{n}) + \mathcal{O}(\beta),\\
A^{<,f}_{\alpha,\beta,\tilde{\kappa}}(\tau) & = \diag(\lambda_1,\ldots,\lambda_{k-1}) + \mathcal{O}(\beta).
\end{align*}
Furthermore, for $\alpha \geq 0, \beta >0$ small enough and all $(\tilde{\kappa},\tau)$ the inequalities
\begin{align*}
\textnormal{Re}\, \mu^{S,sf}, \textnormal{Re}\, \mu^{S,ss}_j < 0 < \textnormal{Re}\, \mu^{U,sf}, \textnormal{Re}\, \mu^{U,ss}_j 
\end{align*}
hold because these inequalities are true for $\tilde{\kappa} = 1$ and $\beta > 0$ small, and because, by continuity, real parts of eigenvalues of $A^{II}_{\alpha,\beta,\tilde{\kappa}}(\tau)$ may not change signs as $\tilde{\kappa}$ is varied in $\partial D_1^+$. The latter argument applies by Lemma \ref{lemspecouter} which states $i\RR \cap \sigma (A^{II}_{\alpha,\beta,\tilde{\kappa}}(\tau)) = \emptyset$ if $\beta > 0$. Hence we find for $\beta >0$ small enough the desired inequalities
\begin{align*}
\textnormal{Re}\, A^<_{\alpha,\beta,\tilde{\kappa}}(\tau) < 0 < \textnormal{Re}\, A^>_{\alpha,\beta,\tilde{\kappa}}(\tau).
\end{align*}
\end{proof}
Combing Lemma \ref{lemdiagouter} and Lemma \ref{lemma6a} the parametrization $\varepsilon = \alpha \beta$ and $\beta^2 = |\kappa|$ in the outer regime yields:
\begin{cor}
\label{lemregime2}
There exist $r_0,r_1, \varepsilon_0 >0$ such that for $\varepsilon \in (0,\varepsilon_0]$ and $\zeta \in \CH^+$ with $\varepsilon^2 r_0 \leq \varepsilon^2 |\zeta| \leq r_1$ the rescaled Evans bundles
\begin{align*}
H_\varepsilon^\pm(\zeta) = \EH^\pm_\varepsilon(\varepsilon^2 \zeta)
\end{align*}
are transversal:
\begin{align*}
H_\varepsilon^-(\zeta) \cap H^+_\varepsilon(\zeta) = \lbrace 0 \rbrace.
\end{align*}
\end{cor}
\subsection{Outmost regime}
The main purpose of this subsection is to show that Lemma \ref{lemma6a} is applicable to the system
\begin{align}
\label{A1}
\xi^\prime &= A_{\varepsilon,\kappa}(\chi_\varepsilon) \xi
\end{align}
in a large portion of the outmost regime: $r \leq |\kappa|, \textnormal{Re}\, \kappa \geq 0$ for $r>0$ large enough. The connection to the entire outmost regime $r_1 \leq |\kappa|$ with boundary defined by arbitrary $r_1>0$ is then an easy consequence of uniform hyperbolicity of $A_{\varepsilon,\kappa}$ on the compact set $r_1 \leq |\kappa| \leq r$ \cite{C65,H81,S02}.\\
In order to apply Lemma \ref{lemma6a} we need to find appropriate block-diagonalizers $R_{\varepsilon,\kappa}(\tau)$ for all $\varepsilon \in [0,\varepsilon_0], \tau \in J$ and all $\kappa$ with $|\kappa|$ large enough. First note that by changing basis via
\begin{align*}
\begin{pmatrix}
\ID_n & 0\\
0 & \kappa
\end{pmatrix}
\end{align*}
for $\kappa \neq 0$ and then via
\begin{align*}
\begin{pmatrix}
\ID_n & \ID_n\\ \ID_n & - \ID_n
\end{pmatrix}
\end{align*}
one arrives at the representation
\begin{align*}
A_{\varepsilon,\kappa}(\tau) \sim A^{III}_{\varepsilon,\kappa}(\tau) = \kappa \begin{pmatrix}
\ID_n & 0\\
0 & -\ID_n
\end{pmatrix} + \begin{pmatrix}
B^>_\varepsilon(\tau) & B^>_\varepsilon(\tau)\\
B^<_\varepsilon(\tau) & B^<_\varepsilon(\tau)
\end{pmatrix} 
\end{align*}
where
\begin{align*}
B^\gtrless_\varepsilon(\tau) := \frac{1}{2} \left(F_\varepsilon(\tau) \pm G_\varepsilon(\tau)\right).
\end{align*}
By assumption A4 if $\varepsilon_0$ is small enough there are constants $c^\gtrless > 0$ such that for all $\varepsilon \in [0,\varepsilon_0], \tau \in J$ and $\eta \in \CC^n$ it holds
\begin{align}
\label{definiteBpm}
\textnormal{Re}\, \skp{\eta,B^>_\varepsilon(\tau) \eta}{} & \geq c^>|\eta|^2\\
\textnormal{Re}\, \skp{\eta,B^<_\varepsilon(\tau) \eta}{} & \leq - c^<|\eta|^2
\end{align}
For $r > 0$ set
\begin{align*}
\CH^+_r := \lbrace \kappa \in \CH^+|\; |\kappa|\geq r \rbrace.
\end{align*}
\begin{lem}
\label{lemdiagoutmost}
There exist $r,\varepsilon_0>0$ and smooth functions
\begin{align*}
R&:[0,\varepsilon_0] \times \CH^+_r \times J \rightarrow \textnormal{GL}_{2n}(\CC),\\
A^< &:[0,\varepsilon_0] \times \CH^+_r \times J \rightarrow \CC^{n\times n},\\
A^> &:[0,\varepsilon_0] \times \CH^+_r \times J \rightarrow \CC^{n\times n},
\end{align*}
such that for all $(\varepsilon,\kappa,\tau) \in[0,\varepsilon_0] \times \CH^+_r \times J$ it holds
\begin{align*}
R_{\varepsilon,\kappa}(\tau)^{-1} A^{III}_{\varepsilon,\kappa}(\tau) R_{\varepsilon,\kappa}(\tau) = \begin{pmatrix}
A^<_{\varepsilon,\kappa}(\tau) & 0 \\ 0 & A^>_{\varepsilon,\kappa}(\tau)
\end{pmatrix},
\end{align*}
and
\begin{align*}
\RE A^<_{\varepsilon,\kappa}(\tau) < 0 < \RE A^>_{\varepsilon,\kappa}(\tau).
\end{align*}
Furthermore, there exists a $c>0$ such that
\begin{align*}
|R_{\varepsilon,\kappa}(0)^{-1}| & \leq c,\\
|\partial_\tau R_{\varepsilon,\kappa}(\tau)| & \leq c \varepsilon.
\end{align*}
\end{lem}
\begin{proof}
We will carry out the matrix bifurcation of
\begin{align*}
B_{\varepsilon,\lambda}(\tau) = \begin{pmatrix}
\ID_n & 0\\
0 & -\ID_n
\end{pmatrix} + \lambda \begin{pmatrix}
B^>_\varepsilon(\tau) & B^>_\varepsilon(\tau)\\
B^<_\varepsilon(\tau) & B^<_\varepsilon(\tau)
\end{pmatrix}
\end{align*}
uniformly in $\varepsilon$ and $\tau$ for $|\lambda| \leq r^{-1}$. Note that for $\kappa \neq 0$ the identity 
\begin{align*}
A^{III}_{\varepsilon,\kappa}(\tau) = \kappa B_{\varepsilon,\kappa^{-1}}(\tau)
\end{align*}
holds. For $\nu >0$ small and $r>0$ large enough the total projections
\begin{align*}
P^{U/S}_{\varepsilon,\lambda}(\tau) := \frac{1}{2 \pi i} \int\limits_{\partial B_\nu(\pm 1)} (z \ID_{2n} - B_{\varepsilon,\lambda}(\tau))^{-1} dz
\end{align*}
are well-defined, smooth in $\varepsilon$ and $\tau$ and holomorphic in $\lambda$. Then
\begin{align*}
P_{\varepsilon,0}(\tau) & = P^U_0 := \begin{pmatrix}
\ID_n & 0 \\ 0&0
\end{pmatrix}\\
P_{\varepsilon,0}(\tau) & = P^S_0:= \begin{pmatrix}
0&0 \\ 0 &\ID_n
\end{pmatrix}
\end{align*}
and a short calculation shows
\begin{align*}
\frac{\partial}{\partial \lambda} P^{U/S}_{\varepsilon,0}(\tau) = \pm \frac{1}{2} \begin{pmatrix}
0 & B^>_\varepsilon(\tau) \\ B^<_\varepsilon(\tau) & 0
\end{pmatrix}.
\end{align*}
%Indeed, we have
%\begin{align*}
%\frac{\partial}{\partial \lambda} P^\pm_{\varepsilon,\tau}(0) & = \frac{1}{2 \pi i} \int\limits_{\partial B_\nu(1)} \begin{pmatrix} (z-1)^{-2} B^+_\varepsilon(\tau) & ((z-1)(z+1))^{-1} B^+_\varepsilon(\tau)\\ ((z+1)(z-1))^{-1}B^-_\varepsilon(\tau) & (z+1)^{-2}B^-_\varepsilon(\tau)\end{pmatrix} dz\\
%\end{align*}
%and using
%\begin{align*}
%\int\limits_{\partial B_\nu(1)} (z-1)^{-2} dz & = 0\\
%\int\limits_{\partial B_\nu(1)} (z+1)^{-2} dz & = 0\\
%\int\limits_{\partial B_\nu(1)} ((z+1)(z-1))^{-1} dz & = \pi i
%\end{align*}
%yields the claimed form of $\partial/\partial \lambda P^+_{\varepsilon,\tau}(0)$.
Following again Kato's reduction process \cite{K95} let
\begin{align*}
Q_{\varepsilon,\lambda}(\tau) := (\partial_\lambda P^U_{\varepsilon,\lambda}(\tau)) P^U_{\varepsilon,\lambda}(\tau) + (\partial_\lambda  P^S_{\varepsilon,\lambda}(\tau)) P^S_{\varepsilon,\lambda}(\tau)
\end{align*}
and let $\DS_{\varepsilon,\lambda}(\tau)$ be the solution of
\begin{align*}
\partial_\lambda \DS_{\varepsilon,\lambda}(\tau) = Q_{\varepsilon,\lambda}(\tau) \DS_{\varepsilon,\lambda}(\tau), \; \DS_{\varepsilon,0}(\tau) = \ID_{2n}.
\end{align*}
Then
\begin{align*}
P^{U/S}_0 = \DS_{\varepsilon,\lambda}(\tau)^{-1} P^{U/S}_{\varepsilon,\lambda}(\tau) \DS_{\varepsilon,\lambda}(\tau)
\end{align*}
and $B_{\varepsilon,\lambda}(\tau)$ is given with respect to the transformation $\DS_{\varepsilon,\lambda}(\tau)$ by
\begin{align*}
M_{\varepsilon,\lambda}(\tau) & = \DS_{\varepsilon,\lambda}(\tau)^{-1} B_{\varepsilon,\lambda}(\tau) \DS_{\varepsilon,\lambda}(\tau)\\
& = P^U_0 M_{\varepsilon,\lambda}(\tau) P^U_0 + P^S_0 M_{\varepsilon,\lambda}(\tau) P^S_0
\end{align*}
which is of the form
\begin{align*}
\begin{pmatrix}
M^>_{\varepsilon,\lambda}(\tau) & 0\\
0 & M^<_{\varepsilon,\lambda}(\tau)
\end{pmatrix}.
\end{align*}
We expand $M_{\varepsilon,\lambda}(\tau)$:
\begin{align*}
M_{\varepsilon,\lambda}(\tau) & = M_{\varepsilon,0}(\tau) + \lambda \frac{\partial}{\partial \lambda} M_{\varepsilon,0}(\tau) + \mathcal{O}(\lambda^2)\\
& = \begin{pmatrix} \ID_n & 0\\ 0 & - \ID_n \end{pmatrix} + \lambda \begin{pmatrix} B^>_{\varepsilon}(\tau) & 0 \\ 0 & B^<_\varepsilon(\tau)\end{pmatrix} + \mathcal{O}(\lambda^2),
\end{align*}
hence
\begin{align*}
M^\gtrless_{\varepsilon,\lambda}(\tau) = \pm \ID_n + \lambda B^\gtrless_\varepsilon(\tau) + \mathcal{O}(\lambda^2).
\end{align*}
By choosing $r>0$ large enough we find by setting
\begin{align*}
R_{\varepsilon,\kappa}(\tau) := \DS_{\varepsilon,\kappa^{-1}}(\tau)
\end{align*}
and
\begin{align*}
A^\gtrless_{\varepsilon,\kappa}(\tau) := \kappa M^\gtrless_{\varepsilon,\kappa^{-1}}(\tau) = \pm \kappa \ID_n + B^\gtrless_\varepsilon(\tau) + \mathcal{O}(|\kappa|^{-1})
\end{align*}
that for all $\varepsilon \in [0,\varepsilon_0], \tau \in J$ and $|\kappa| \geq r$
\begin{align*}
R_{\varepsilon,\kappa}(\tau)^{-1} A^{III}_{\varepsilon,\kappa}(\tau) R_{\varepsilon,\kappa}(\tau) = \begin{pmatrix}
A^>_{\varepsilon,\kappa}(\tau) & 0 \\ 0 & A^<_{\varepsilon,\kappa}(\tau)
\end{pmatrix}.
\end{align*}
For $\eta \in \CC^n$ we have for $r$ large enough and $\textnormal{Re}\, \kappa \geq 0$ that
\begin{align*}
\textnormal{Re}\, \skp{\eta, A^>_{\varepsilon,\kappa}(\tau)\eta}{} & = \textnormal{Re}\, \kappa |\eta|^2 + \textnormal{Re}\, \skp{\eta, B^>_{\varepsilon}(\tau) \eta}{} + \textnormal{Re}\,\skp{\eta,\mathcal{O}(\kappa^{-1}) \eta}{}\\
& \geq (\textnormal{Re}\, \kappa + c^> + \mathcal{O}(|\kappa|^{-1})) |\eta|^2\\
& \geq \frac{1}{2} c^> |\eta|^2\\
\textnormal{Re}\, \skp{\eta, A^<_{\varepsilon,\kappa}(\tau)\eta}{} & = -\textnormal{Re}\, \kappa |\eta|^2 + \textnormal{Re}\, \skp{\eta, B^<_{\varepsilon}(\tau) \eta}{} + \textnormal{Re}\,\skp{\eta,\mathcal{O}(\kappa^{-1}) \eta}{}\\
& \leq (-\textnormal{Re}\, \kappa - c^< + \mathcal{O}(|\kappa|^{-1})) |\eta|^2\\
& \leq -\frac{1}{2} c^< |\eta|^2.
\end{align*}
%\begin{rem}
%We may even allow, e.g., $\textnormal{Re} \kappa \geq - \min \lbrace c^>,c^< \rbrace/4$.
%\end{rem}
Since $\DS_{\varepsilon,\kappa^{-1}}(0)^{-1}$ is a continuous function and $\varepsilon$ and $\kappa^{-1}$ range over a compact set there is a constant $c_1>0$ such that
\begin{align*}
|R_{\varepsilon,\kappa}(0)^{-1}| \leq c_1.
\end{align*}
Furthermore, since $\DS_{\varepsilon,\tau}(\kappa^{-1})$ is continuously differentiable in $\tau$ and $\varepsilon$ (and its arguments range, again, over a compact set) there is a constant $c_2 > 0$ such that
\begin{align*}
\left\vert \frac{\partial^2}{\partial \varepsilon \partial\tau}R_{\varepsilon,\kappa}(\tau)\right\vert \leq c_2.
\end{align*}
By $R_{0,\kappa}(\tau) = R_{0,\kappa}(0)$ (for $\varepsilon =0$ there is no $\tau$ dependence of $A^{III}_{\varepsilon,\kappa}(\tau)$) we deduce from
\begin{align*}
R_{\varepsilon,\kappa}(\tau) = R_{0,\kappa}(0) + \varepsilon \int\limits_0^1 \frac{\partial}{\partial \varepsilon} R_{s \varepsilon,\kappa}(\tau) ds
\end{align*}
that
\begin{align*}
\left\vert \frac{\partial}{\partial \tau}R_{\varepsilon,\kappa}(\tau)\right\vert \leq \varepsilon c_2.
\end{align*}
\end{proof}
Combining the above estimate gives
\begin{align*}
\left\vert R_{\varepsilon,\kappa}(0)^{-1} \frac{\partial}{\partial \tau}R_{\varepsilon,\kappa}(\tau) \right\vert \leq c_1c_2 \varepsilon,
\end{align*}
and Lemma \ref{lemma6a} is applicable if $\varepsilon$ is small enough. Finally note that Lemma \ref{lemma6a} makes a statement on the evolution of vector spaces which is independent of the choice of basis, hence its conclusions hold for the equations \eqref{chi1} and \eqref{A1}, i.e.~for
\begin{align*}
\chi_\varepsilon^\prime & = \varepsilon (1- \chi_\varepsilon^2)h_\varepsilon(\chi_\varepsilon)\\
\xi^\prime &= A_{\varepsilon,\kappa}(\chi_\varepsilon) \xi.
\end{align*}
This proves:
\begin{cor}
\label{lemregime3}
For any $r_1 > 0$ there exists $\varepsilon_0=\varepsilon_0(r_1)$ such that for all $\varepsilon \in (0,\varepsilon_0]$ and all $\zeta \in \CH^+$ with $r_1 \leq \varepsilon^2 \zeta$ the rescaled Evans bundles are transversal:
\begin{align*}
H^-_\varepsilon(\zeta) \cap H^+_\varepsilon(\zeta) = \lbrace 0 \rbrace.
\end{align*}
\end{cor}
Theorem \ref{maintheorem} follows from Corollaries \ref{corregime1}, \ref{lemregime2} and \ref{lemregime3} by choosing $r_0 >0$ in Corollary \ref{corregime1} large enough and $r_1>0$ in Corollary \ref{lemregime3} small enough such that for small $\varepsilon_0 >0$ the statement of Corollary \ref{lemregime2} holds for the constants $r_0, r_1, \varepsilon_0$. The three spectral regimes connect to cover all of $\CH^+$.
\begin{rem}
We close with the side remark that for $\varepsilon >0$ small enough there exists a $d_\varepsilon> 0$ such that the Evans bundles $\EH^\pm_\varepsilon$ and an Evans function $\EE_\varepsilon$ are holomorphically defined on
\begin{align*}
\CH^+-d_\varepsilon = \lbrace \kappa \in \CC|\;\; \RE \kappa \geq - d_\varepsilon \rbrace
\end{align*}
with the Evans function condition holding on the above superset of $\CH^+$, too.
%In fact, by Remark \ref{reminnerholo} the bundles $\EH^-_\varepsilon$ and $\EH^+_\varepsilon$ exist as holomorphic functions on $B_{\varepsilon^2 r_{00}}(0)$ with an Evans function $\EE_\varepsilon$ satisfying the Evans function condition there. Invoking Corolarry \ref{cordispersion} and using analyticity of $\EH^\pm_\varepsilon$ on $\Lambda_\varepsilon$ shows the existence of an holomorphic extension of $\EH^\pm_\varepsilon$ on $\lbrace\RE \kappa \geq - d_\varepsilon \rbrace$ for some $d_\varepsilon>0$. Furthermore, in the proof of Lemma \ref{lemdiagoutmost} we may even allow, e.g., $\textnormal{Re}\, \kappa \geq - \min \lbrace c^>,c^< \rbrace/4$ with possibly larger $r$ and smaller $\varepsilon_0$. Hence we obtain transversality of $\EH^-_\varepsilon(\kappa)$ and $\EH^+_\varepsilon(\kappa)$ for $|\kappa| >r, \RE \kappa \geq -d_\varepsilon,$ if $d_\varepsilon$ is small enough. The argument is closed by using pre-compactness of $D_r\backslash D_{\varepsilon^2 r_{00}}$ and transversality of $\EH^-_\varepsilon$ and $\EH^+_\varepsilon$ on $\CH^+\backslash \lbrace 0 \rbrace$.
\end{rem}
\section{Acknowledgments}
We want to thank H. Freistühler for proposing this research topic and making helpful suggestions on how to approach it.\\
This research has been supported by Deutsche Forschungsgemeinschaft under Grant No. FR822/10-1.


\begin{thebibliography}{99}
\footnotesize

\bibitem{AGJ90}
J.~Alexander, R.~Gardner, and C.~Jones.
\newblock A topological invariant arising in the stability analysis of
  travelling waves.
\newblock {\em J. Reine Angew. Math.}, 410:167--212, 1990.

\bibitem{B24}
J. Bärlin.
\newblock Spectral stability of shock profiles for hyperbolically regularized systems of conservation laws.
\newblock {\em Arch. Rational Mech. Anal.}, 248, 125, 2024.

\bibitem{BB09}
B.~H. Barker.
\newblock Evans function computations.
\newblock Master's thesis, Brigham Young University, 2009.

\bibitem{B74}
G.~Boillat.
\newblock Sur l'existence et la recherche d'\'{e}quations de conservation suppl\'{e}mentaires pour les syst\`emes hyperboliques.
\newblock {\em C. R. Acad. Sci. Paris S\'{e}r. A}, 278:909--912, 1974.

\bibitem{C65}
W.~A. Coppel.
\newblock {\em Stability and asymptotic behavior of differential equations}.
\newblock D. C. Heath and Company, Boston, Mass., 1965.

\bibitem{DM90}
B.~F. Doolin and C.~F. Martin.
\newblock {\em Introduction to differential geometry for engineers}, volume 136
  of {\em Monographs and Textbooks in Pure and Applied Mathematics}.
\newblock Marcel Dekker, Inc., New York, 1990.

\bibitem{E74}
J.~W. Evans.
\newblock Nerve axon equations. {IV}. {T}he stable and the unstable impulse.
\newblock {\em Indiana Univ. Math. J.}, 24(12):1169--1190, 1974/75.

\bibitem{E10}
L.~C. Evans.
\newblock {\em Partial differential equations}.
\newblock American Mathematical Society, 2 edition, 2010.

\bibitem{F71}
N.~Fenichel.
\newblock Persistence and smoothness of invariant manifolds for flows.
\newblock {\em Indiana Univ. Math. J.}, 21:193--226, 1971.

\bibitem{F79}
N.~Fenichel.
\newblock Geometric singular perturbation theory for ordinary differential
  equations.
\newblock {\em J. Differential Equations}, 31(1):53--98, 1979.

\bibitem{F00}
C.~Fries.
\newblock Stability of viscous shock waves associated with non-convex modes.
\newblock {\em Arch. Ration. Mech. Anal.}, 152(2):141--186, 2000.

\bibitem{FS21}
H.~Freist{\"u}hler and M.~Sroczinski.
\newblock A class of uniformly dissipative symmetric hyperbolic-hyperbolic
  systems.
\newblock {\em J. Differential Equations}, 288:40--61, 2021.

\bibitem{FS02}
H.~Freist{\"u}hler and P.~Szmolyan.
\newblock Spectral stability of small shock waves.
\newblock {\em Arch. Ration. Mech. Ana.}, 164: 287--309, 2002.

\bibitem{FS10}
H.~Freist{\"u}hler and P.~Szmolyan.
\newblock Spectral stability of small-amplitude viscous shock waves in several
  space dimensions.
\newblock {\em Arch. Ration. Mech. Ana.}, 195(2):353--373, 2010.

\bibitem{FT14}
H.~Freist{\"u}hler and B.~Temple.
\newblock Causal dissipation and shock profiles in the relativistic fluid dynamics of pure radiation.
\newblock {\em Proc. R. Soc. Lond. A Math. Phys. Eng. Sci.}, 470, 20140055: 17 pp., 2014.

\bibitem{FL71}
K.~O. Friedrichs and P.~D. Lax.
\newblock Systems of conservation equations with a convex extension.
\newblock {\em Proc. Nat. Acad. Sci. U.S.A.}, 68:1686--1688, 1971.

\bibitem{FGP94}
J.~Ferrer, M.~I. Garc\'{\i}a, and F.~Puerta.
\newblock Differentiable families of subspaces.
\newblock {\em Linear Algebra Appl.}, 199:229--252, 1994.

\bibitem{FSW14}
H.~Freist{\"u}hler, P.~Szmolyan, and J.~W\"{a}chtler.
\newblock Spectral stability of shock waves associated with not genuinely
  nonlinear modes.
\newblock {\em J. Differential Equations}, 257(1):185--206, 2014.

\bibitem{G59}
I.~M. Gelfand.
\newblock Some problems in the theory of quasi-linear equations.
\newblock {\em Uspehi Mat. Nauk}, 14(2 (86)):87--158, 1959.

\bibitem{G61}
S.~K. Godunov.
\newblock An interesting class of quasi-linear systems.
\newblock {\em Dokl. Akad. Nauk SSSR}, 139:521--523, 1961.

\bibitem{G86}
J.~Goodman.
\newblock Nonlinear asymptotic stability of viscous shock profiles for
  conservation laws.
\newblock {\em Archive for Rational Mechanics and Analysis}, 95(4):325--344,
  1986.
  
\bibitem{GJ91}
R.~Gardner and C.~K. R.~T. Jones.
\newblock Stability of travelling wave solutions of diffusive predator-prey
  systems.
\newblock {\em Trans. Amer. Math. Soc.}, 327(2):465--524, 1991.

\bibitem{GJ912}
R.~A. Gardner and C.~K. R.~T. Jones.
\newblock {\em Stability of one-dimensional waves in weak and singular limits}.
\newblock SIAM, Philadelphia, PA, 1991.

\bibitem{GH78}
P.~Griffiths and J.~Harris.
\newblock {\em Principles of algebraic geometry}.
\newblock Pure and Applied Mathematics. Wiley-Interscience [John Wiley \&
  Sons], New York, 1978.
  
\bibitem{GZ98}
R.~A. Gardner and K.~Zumbrun.
\newblock The gap lemma and geometric criteria for instability of viscous shock
  profiles.
\newblock {\em Comm. Pure Appl. Math.}, 51(7):797--855, 1998.

\bibitem{H81}
D.~Henry.
\newblock {\em Geometric theory of semilinear parabolic equations}, volume 840
  of {\em Lecture Notes in Mathematics}.
\newblock Springer-Verlag, Berlin-New York, 1981.

\bibitem{H03}
J.~ Humpherys.
\newblock Stability of {J}in-{X}in relaxation shocks.
\newblock {\em Quart. Appl. Math.}, 61(2):251--263, 2003.

\bibitem{HPS77}
M.~W. Hirsch, C.~C. Pugh, and M.~Shub.
\newblock Invariant manifolds.
\newblock pages ii+149, 1977.

\bibitem{J84}
C.~K. R.~T. Jones.
\newblock Stability of the travelling wave solution of the
  {F}itz{H}ugh-{N}agumo system.
\newblock {\em Trans. Amer. Math. Soc.}, 286(2):431--469, 1984.

\bibitem{J95}
C.~K.~R.~T. Jones.
\newblock Geometric singular perturbation theory.
\newblock 1609:44--118, 1995.

\bibitem{JX95}
S.~Jin and Z.~P. Xin.
\newblock The relaxation schemes for systems of conservation laws in arbitrary
  space dimensions.
\newblock {\em Comm. Pure Appl. Math.}, 48(3):235--276, 1995.

\bibitem{JGK93}
C.~K. R.~T. Jones, R.~Gardner, and T.~Kapitula.
\newblock Stability of travelling waves for nonconvex scalar viscous
  conservation laws.
\newblock {\em Comm. Pure Appl. Math.}, 46(4):505--526, 1993.

\bibitem{K95}
T.~Kato.
\newblock {\em Perturbation theory for linear operators}.
\newblock Classics in Mathematics. Springer-Verlag, Berlin, 1995.
\newblock Reprint of the 1980 edition.

\bibitem{KS98}
T.~Kapitula and B.~Sandstede.
\newblock Stability of bright solitary-wave solutions to perturbed nonlinear
  {S}chr\"{o}dinger equations.
\newblock {\em Phys. D}, 124(1-3):58--103, 1998.

\bibitem{KY04}
S.~Kawashima and W.-A. Yong.
\newblock Dissipative structure and entropy for hyperbolic systems of balance
  laws.
\newblock {\em Arch. Ration. Mech. Anal.}, 174(3):345--364, 2004.

\bibitem{KMN86}
S.~Kawashima, A.~Matsumura, and Kenji Nishihara, K.
\newblock Asymptotic behavior of solutions for the equations of a viscous
  heat-conductive gas.
\newblock {\em Proc. Japan Acad. Ser. A Math. Sci.}, 62, 1986.

\bibitem{L57}
P.~D. Lax.
\newblock Hyperbolic systems of conservation laws. {II}.
\newblock {\em Comm. Pure Appl. Math.}, 10:537--566, 1957.

\bibitem{L73}
P.~D. Lax.
\newblock {\em Hyperbolic systems of conservation laws and the mathematical
  theory of shock waves}.
\newblock Conference Board of the Mathematical Sciences Regional Conference
  Series in Applied Mathematics, No. 11. Society for Industrial and Applied
  Mathematics, Philadelphia, Pa., 1973.
  
\bibitem{L13}
J.~M. Lee.
\newblock {\em Introduction to smooth manifolds}, volume 218 of {\em Graduate
  Texts in Mathematics}.
\newblock Springer, New York, second edition, 2013.

\bibitem{L87}
T.-P. Liu.
\newblock Hyperbolic conservation laws with relaxation.
\newblock {\em Comm. Math. Phys.}, 108(1):153--175, 1987.

\bibitem{L97}
T.-P. Liu.
\newblock Pointwise convergence to shock waves for viscous conservation laws.
\newblock {\em Comm. Pure Appl. Math.}, 50(11):1113--1182, 1997.

\bibitem{L03}
H.~Liu.
\newblock Asymptotic stability of relaxation shock profiles for hyperbolic
  conservation laws.
\newblock {\em J. Differential Equations}, 192(2):285--307, 2003.

\bibitem{LMRS16}
C.~Lattanzio, C.~Mascia, R.~G. Plaza, and C.~Simeoni.
\newblock Analytical and numerical investigation of traveling waves for the
  {A}llen-{C}ahn model with relaxation.
\newblock {\em Math. Models Methods Appl. Sci.}, 26(5):931--985, 2016.

\bibitem{MP85}
A.~Majda and R.~L. Pego.
\newblock Stable viscosity matrices for systems of conservation laws.
\newblock {\em Journal of Differential Equations}, 56(2):229 -- 262, 1985.

\bibitem{MN85}
A.~Matsumura and K.~Nishihara.
\newblock On the stability of travelling wave solutions of a one-dimensional
  model system for compressible viscous gas.
\newblock {\em Japan J. Appl. Math.}, 2(1):17--25, 1985.

\bibitem{MZ02}
C.~Mascia and K.~Zumbrun.
\newblock Pointwise Green's function bounds and stability of relaxation shocks.
\newblock {\em Indiana Univ. Math. J.}, 51: 773--904, 2002.

\bibitem{MZ04}
C.~Mascia and K.~Zumbrun.
\newblock Stability of large-amplitude viscous shock profiles of
  hyperbolic-parabolic systems.
\newblock {\em Arch. Ration. Mech. Anal.}, 172(1):93--131, 2004.

\bibitem{MZ05}
C.~Mascia and K.~Zumbrun.
\newblock Stability of large-amplitude shock profiles of general relaxation
  systems.
\newblock {\em SIAM J. Math. Anal.}, 37(3):889--913, 2005.

\bibitem{MZ09}
C.~Mascia and K.~Zumbrun.
\newblock Spectral stability of weak relaxation shock profiles.
\newblock {\em Comm. Partial Differential Equations}, 34(1-3):119--136, 2009.

\bibitem{P83}
A.~Pazy.
\newblock {\em Semigroups of linear operators and applications to partial
  differential equations}, volume~44 of {\em Applied Mathematical Sciences}.
\newblock Springer-Verlag, New York, 1983.

\bibitem{PW92}
R.~L. Pego and M.~I. Weinstein.
\newblock Eigenvalues, and instabilities of solitary waves.
\newblock {\em Philos. Trans. Roy. Soc. London Ser. A}, 340(1656):47--94, 1992.

\bibitem{PZ04}
R.~Plaza and K.~Zumbrun.
\newblock An Evans function approach to spectral stability of small-amplitude shock profiles.
\newblock {\em Discrete Contin. Dyn. Syst.}, 10(4): 885--924, 2004.

\bibitem{RS81}
T.~Ruggeri and A.~Strumia.
\newblock Main field and convex covariant density for quasilinear hyperbolic
  systems. {R}elativistic fluid dynamics.
\newblock {\em Ann. Inst. H. Poincar\'{e} Sect. A (N.S.)}, 34(1):65--84, 1981.

\bibitem{S02}
B.~Sandstede.
\newblock Stability of travelling waves.
\newblock In {\em Handbook of dynamical systems, {V}ol. 2}, pages 983--1055. North-Holland, Amsterdam, 2002.

\bibitem{S76}
D.~H. Sattinger.
\newblock On the stability of waves of nonlinear parabolic systems.
\newblock {\em Advances in Math.}, 22(3):312--355, 1976.

\bibitem{S94}
J.~Smoller.
\newblock {\em Shock waves and reaction-diffusion equations}, volume 258 of
  {\em Grundlehren der mathematischen Wissenschaften [Fundamental Principles of
  Mathematical Sciences]}.
\newblock Springer-Verlag, New York, second edition, 1994.

\bibitem{S20}
M.~Sroczinski.
\newblock Asymptotic stability in a second-order symmetric hyperbolic system
  modeling the relativistic dynamics of viscous heat-conductive fluids with
  diffusion.
\newblock {\em J. Differential Equations}, 268(2):825--851, 2020.

\bibitem{S91}
P.~Szmolyan.
\newblock Transversal heteroclinic and homoclinic orbits in singular
  perturbation problems.
\newblock {\em J. Differential Equations}, 92, 1991.

\bibitem{SK85}
Y.~Shizuta and S.~Kawashima.
\newblock Systems of equations of hyperbolic-parabolic type with applications
  to the discrete {B}oltzmann equation.
\newblock {\em Hokkaido Math. J.}, 14(2):249--275, 1985.

\bibitem{SX93}
A.~Szepessy and Z.~Xin.
\newblock Nonlinear stability of viscous shock waves.
\newblock {\em Arch. Rational Mech. Anal.}, (122):53--103, 1993.

\bibitem{U09}
Y.~Ueda.
\newblock Stability of travelling wave solutions to a semilinear hyperbolic
  system with relaxation.
\newblock {\em Math. Methods Appl. Sci.}, 32(4):419--434, 2009.

\bibitem{W99}
G.~B. Whitham.
\newblock {\em Linear and nonlinear waves}.
\newblock Pure and Applied Mathematics (New York). John Wiley \& Sons, Inc.,
  New York, 1999.
\newblock Reprint of the 1974 original, A Wiley-Interscience Publication.

\bibitem{Y04}
W.-A. Yong.
\newblock Entropy and global existence for hyperbolic balance laws.
\newblock {\em Arch. Ration. Mech. Anal.}, 172(2):247--266, 2004.

\bibitem{Z99}
Y.~Zeng.
\newblock Gas dynamics in thermal nonequilibrium and general hyperbolic systems
  with relaxation.
\newblock {\em Arch. Ration. Mech. Anal.}, 150(3):225--279, 1999.

\bibitem{ZH98}
K.~Zumbrun and P.~Howard.
\newblock Pointwise semigroup methods and stability of viscous shock waves.
\newblock {\em Indiana Univ. Math. J.}, 47(3):741--871, 1998.

%\bibitem[Eva72]{E71}
%J.~W. Evans.
%\newblock Nerve axon equations. {I}. {L}inear approximations.
%\newblock {\em Indiana Univ. Math. J.}, 21:877--885, 1971/72.
%
%\bibitem[Eva73a]{E72}
%J.~W. Evans.
%\newblock Nerve axon equations. {II}. {S}tability at rest.
%\newblock {\em Indiana Univ. Math. J.}, 22:75--90, 1972/73.
%
%\bibitem[Eva73b]{E73}
%J.~W. Evans.
%\newblock Nerve axon equations. {III}. {S}tability of the nerve impulse.
%\newblock {\em Indiana Univ. Math. J.}, 22:577--593, 1972/73.

\end{thebibliography}
\end{document}